\newtheorem{theorem}{Theorem}[section]
\newtheorem{lemma}[theorem]{Lemma}
\newtheorem{prop}[theorem]{Proposition}
\newtheorem{defn-prop}[theorem]{Definition-Proposition}
\newtheorem{coro}[theorem]{Corollary}
\newtheorem{defn}[theorem]{Definition}
\theoremstyle{definition}
\newtheorem{example}[theorem]{Example}
\newtheorem{remark}[theorem]{Remark}
\numberwithin{equation}{section}
\def \msf{\mathsf}
\def \mc{\mathcal}
\def \scr{\mathscr}
\def \inv{^{-1}}
\def \0{\infty}
\def \v{\vskip 0.1in}
\def \n{\noindent}
\def \integer{\mathbb{Z}}
\def \qq{\quad}
\def \rto{\rightarrow}
\def \Rto{\Rightarrow}
\def \hrto{\hookrightarrow}
\def \rrto{\rightrightarrows}
\def \A{{\sf A}}
\def \B{{\sf B}}
\def \C{{\sf C}}
\def \G{{\sf G}}
\def \sH{{\sf H}}
\def \K{{\sf K}}
\def \U{{\sf U}}
\def \V{{\sf V}}
\def \Z{{\sf Z}}
\def \f{{\sf f}}
\def \g{{\sf g}}
\def \h{{\sf h}}
\def \su{{\sf u}}
\def \sv{{\sf v}}
\def \w{{\sf w}}
\def \sq{{\sf q}}
\def \sk{{\sf k}}
\def \sj{{\sf j}}
\begin{document}

\title{On fibrations of Lie groupoids}

\author{Bohui Chen}
\address{Department of Mathematics and Yangtze Center of Mathematics, Sichuan University, 610065, Chengdu, China.}
\email{bohui@cs.wisc.edu.}

\author{Cheng-Yong Du}
\address{Department of Mathematics, Sichuan Normal University, 610068, Chengdu, China.}
\email{cyd9966@hotmail.com}

\author{Yu Wang}
\address{School of Mathematics, Sichuan University of Science $\&$ Engineering, 643000, Zigong, China}
\email{wangyu$\underline{\hspace{0.5em}}$813@163.com}

\date{}

\thanks{}

\keywords{Lie groupoid fibrations; generalized cocycle; groupoid cohomology; symplectic structure; morphism groupoid
}
\subjclass[2010]{Primary: 22A22; 58H05; Secondary: 18B40; 20L05
}

\begin{abstract}
As groupoids generalize groups, motivated by group extensions we consider a kind of fibrations of Lie groupoids, called locally topological product Lie groupoid fibrations with fiber $\A$, i.e.,
\[
1\rto \A\rto\G\rto\K\rto 1
\]
where $\A,\G$ and $\K$ are Lie groupoids.  Similar to the theory of group extensions, we show that the existence of locally topological product Lie groupoid fibrations with fiber $\A$ over $\K$ is obstructed by a groupoid cohomology of $H^3_{\bar \Lambda}(\K,Z_\A)$, and these locally topological product Lie groupoid fibrations are classified by $H^2_{\bar \Lambda}(\K,Z_\A)$ once exists. Here $Z_\A$ is the center of  $\A$. This generalizes the theory of group extensions, of gerbes over manifolds/groupoids and etc.
\end{abstract}

\maketitle


\section{Introduction}
\label{intro}

Groupoid fibrations were introduced by R. Brown (cf. \cite{Brown1970}) in 1970. A groupoid fibration is a star-surjective strict morphism of groupoids $\f=(f^0,f^1):\G\rto \K$. Given such a groupoid fibration, the fiber of $\f$ over an object $y\in K^0$ is a sub-groupoid of $\G$, that is $\left((f^1)\inv(1_y)\rrto (f^0)\inv(y)\right)$, where $1_y$ is the unit arrow over $y$. Naturally, we could consider groupoid fibrations with a fixed fiber. A natural example of groupoid fibrations with a fixed fiber comes out of group extensions. Consider a group extension of $K$ by $A$
\begin{equation}\label{eqn_gext}
1\rto A\xrightarrow{i} G\xrightarrow{j} K\rto 1.
\end{equation}
Associated to each group we have the classifying groupoid; so for groups in the extension \eqref{eqn_gext} we have calssifying groupoids $[\bullet/G]=(G\rrto \bullet)$, $[\bullet/K]=(K\rrto\bullet)$ and $[\bullet/A]=(A\rrto \bullet)$. Then from \eqref{eqn_gext} we get a groupoid fibration $[\bullet/G]\xrightarrow{(id_\bullet,j)} [\bullet/K]$ with fiber $[\bullet/A]$.

It is well-known that a group extension \eqref{eqn_gext} is determined by a pair $(\omega, f)$, where $\omega: K\rto \mathrm{Aut}(A)$ which induces a homomorphism $\bar\omega:K\rto \mathrm{Out}(A)$, called the band, $f: K\times K\rto A$ is called a cofactor; moreover $(\omega,f)$ satisfies certain cocycle condition (see for example \cite{Adem-Milgram2004,Weibel2002}). Here $\mathrm{Out}(A)$ denotes the group of outer automorphism of $A$. Conversely, given a homomorphism $\bar\omega: K\to \mathrm{Out}(A)$, it defines an obstruction class $c_{\bar\omega}\in H^3_{\bar\omega}(K,Z(A))$, where $Z(A)$ is the center of $A$. Extensions of $K$ by $A$ with $\bar\omega$ as its band exist
if and only if $c_{\bar\omega}=0$. Moreover, once $c_{\bar\omega}=0$, extensions with band $\bar\omega$ are classified by $H^2_{\bar\omega}(K,Z(A))$ (see for example \cite{Adem-Milgram2004,Weibel2002}).

A natural question is to study the existence and classification of groupoid fibrations over $\K$ with a given groupoid as fibers.
In this paper we study this question in the category of Lie groupoids. We consider the generalizations of \eqref{eqn_gext}
\begin{equation}\label{eqn_gpext}
1\rto \sf A\xrightarrow{\phi} \sf G\xrightarrow{\psi} K\rto 1,
\end{equation}
where $\A,\G,\K$ are Lie groupoids. Here by the exactness at $\sf A$ we mean that $\phi$ is injective, and by the exactness at $\sf K$ we mean that $\psi$ is surjective. The exactness at $\sf G$ will be defined in Section \ref{S 3}. In fact, there are many important examples that can be thought as special cases of this concept: (1) when $\A$ is $[\bullet/A]$ for some group $A$, $\G$ is a gerbe over $\K$ (see for example \cite{Laurent-Gengoux-Stienon-Xu2009,Tang-Tseng2014a, Tu2006}); (2) when $\A$ is a manifold $F$, $\G$ is a fiber bundle over $\K$ with fiber $F$ (see for example  \cite{Adem-Leida-Ruan2007}).

The main results of the paper show that the theory of group extensions may be generalized to that of certain Lie groupoid fibrations, which we call locally topological product Lie groupoid fibrations (cf. Definition \ref{D locally topological product fibration}). In Section \ref{S 4} we show that the existence of locally topological product Lie groupoid fibration with fiber $\A$ associated to a given morphism
\[
\bar\Lambda:K^1\rto\overline{\sf SAut(A)},
\]
called the band,
is obstructed by a class $[\Xi_{\bar\Lambda}]$ in the groupoid cohomology group $H^3_{\bar\Lambda}(\K,Z_\A)$, where $\overline{\sf SAut(A)}$ is the coarse space of the groupoid of strict automorphisms of $\A$ and also a group, and $Z_\A$ is the center of the Lie groupoid $\A$ (cf. Definition \ref{D center-of-groupoid}). In Section \ref{S 5}, we show that when $[\Xi_{\bar\Lambda}]=0$, the isomorphic classes of such locally topological product Lie groupoid fibrations with band $\bar\Lambda$ are bijective to $H^2_{\bar\Lambda}(\K,Z_\A)$. We also classify locally topological product Lie groupoid fibration under certain equivalence relation.

The study of Lie groupoid fibrations is originally motivated by the study of Gromov--Witten theory of orbifolds (\cite{Chen-Ruan2002}) in terms of orbifold groupoids (i.e. proper \'etale Lie groupoids). This is due to an alternative view of \eqref{eqn_gpext}: roughly speaking, $\sf G$ may be thought as a family of $\sf A$ parameterized by $\sf K$. In the Gromov--Witten theory of orbifolds, it is inevitable to consider families of curves parameterized by the Deligne--Mumford moduli spaces which are Lie groupoids (cf. \cite{Chen-Li-Wang2016}).
This issue will be addressed in \cite{Chen-Du-Ono}.

Another interesting case is to consider the symplectic orbifold groupoid fibrations. Suppose that we have a locally topological product Lie groupoid fibration given by \eqref{eqn_gpext}, one may ask when will $\G$ be a symplectic orbifold if both $\sf A$ and $\sf K$ are symplectic orbifolds. We explain this issue in Section \ref{S 6}. Furthermore, it is natural to ask the relations among the Gromov--Witten invariants of $\sf A,K$ and $\sf G$.
For instance, Tang and Tseng (\cite{Tang-Tseng2014a,Tang-Tseng2016}) studied some special cases of that $\sf G$ is an \'etale gerbe over a symplectic orbifold $\sf K$. We will study the Gromov--Witten theory for general fibrations in the future.
Finally, as an application, we study the structure of certain morphism groupoids of morphisms to a locally topological product Lie groupoid fibrations in Section \ref{S 7}.

In \cite{Buss-Meyer2016}, A. Buss and R. Meyer introduced and studied topological groupoid fibrations. It would be interesting to study locally topological product Lie groupoid fibrations in the frame work of topological groupoid fibrations. In this paper, we focus on Lie groupoids.

\subsection*{Acknowledgements}
We would like to thank Rui Wang and Xiang Tang for useful discussions. This work was supported by the National Natural Science Foundation of China (No. 11431001, No. 11890663, No. 11501393, No. 11501390, No. 11701397), by Sichuan Science and Technology Program (No. 2019YJ0509, No. 2019YJ0456), by Sichuan University (No. 1082204112190).

\section{Preliminaries}\label{S 2}

\subsection{Lie groupoids}
We briefly review basic concepts of Lie groupoids. One may be referred  to \cite{Adem-Leida-Ruan2007,Mackenzie1987,Mackenzie2005,Moerdijk-Mrcun2003} for more details.

A Lie groupoid, denoted by $\G=(G^1\rrto G^0)$, consists of two smooth manifolds $G^0$ and $G^1$, and structure maps:
\begin{enumerate}
\item  the source and target maps by $s: G^1\rto G^0$ and $t: G^1\rto G^0$ respectively,
\item  the composition/multiplication map\footnote{Note that this is different from the usual notation, in which $s(g)=t(h)$, that is the arrows go from the right to the left. In this paper we use the convention that the composition of arrows goes from left to right.} by $m:G^{[2]}:=G^1\times_{t,s}G^1\rto G^1$, $m(g,h)=gh$ or $g\cdot h$,
\item  the unit map by $u: G^0\rto G^1$, and $u(a)$ by $1_a$,
\item  the inverse map $i: G^1\rto G^1$, and $i(g)$ by $g\inv$,
\end{enumerate}
where both source and target maps are submersions, and all other three maps are smooth maps.

A Lie groupoid is a category after forgetting the smooth structures. We denote an arrow in $g\in G^1$ by $g: x\rto y$. We denote the {\bf coarse space} of $\G$ by $\overline{\G}=G^0/G^1$, which has the quotient topology, and the quotient map to the coarse space by $\pi:G^0\rto\overline\G$.

A Lie groupoid $\G$ is {\bf proper} if $s\times t: G^1\rto G^0\times G^0$ is proper; $\G$ is {\bf \'etale} if both $s$ and $t$ are local diffeomorphisms. A proper \'etale Lie groupoid is called an {\bf orbifold groupoid}.

Let $\G,\sH$ be two Lie groupoids. A {\bf strict Lie groupoid morphism} (or strict morphism for short) $\f:\G\rto \sH$ is a functor $\f=(f^0,f^1)$ with both $f^0$ and $f^1$ are smooth maps. A strict Lie groupoid morphism $\f:\G\rto\sH$ induces a continuous map on coarse space $\overline{\f}:\overline{\G}\rto\overline{\sH}$.

A {\bf natural transformation} $\sigma:\f\Rto\g:\G\rto\sH$ between two strict morphisms is a smooth map $\sigma:G^0\rto H^1$ such that for every arrow $g:x\rto y$ in $G^1$ there is a commutative diagram in $H^1$:
\begin{align}\label{E commut-diag-natural-trans}
\begin{split}
\xymatrix{
f^0(x)\ar[rr]^-{\sigma(x)} \ar[d]_-{f^1(g)}&& g^0(x)\ar[d]^-{g^1(g)}\\
f^0(y)\ar[rr]^-{\sigma(y)} && g^0(y).}
\end{split}
\end{align}

A strict Lie groupoid morphism $\f:\G\rto\sH$ is called an {\bf equivalence} if
\begin{enumerate}
\item $t\circ pr_2:G^0\times_{f^0,s}H^1\rto H^0$ is a surjective submersion, and
\item the square
\begin{align*}
\begin{split}
\xymatrix{
G^1\ar[rr]^-{f^1}\ar[d]_-{s\times t} &&
H^1 \ar[d]^-{s\times t} \\
G^0\times G^0\ar[rr]^-{f^0\times f^0} && H^0 }
\end{split}
\end{align*}
is a fiber product.
\end{enumerate}

\begin{remark}\label{R 2-cat-of-grpd}
Lie groupoids together with strict morphisms and natural transformations between strict morphisms form a 2-category {\bf 2Gpd}. We denote the vertical and horizontal compositions of natural transformations in {\bf 2Gpd} by ``$\odot$'' and ``$\circledast$'' respectively. Therefore, for three natural transformations $\rho_1:\f\Rto\g:\A\rto\B, \rho_2:\g\Rto \h:\A\rto\B$ and $\rho_3:\sk\Rto\sj:\B\rto\C$ we have $\rho_1\odot\rho_2:\f\Rto\h:\A\rto \B$ with
\begin{align}\label{E def-odot}
\rho_1\odot\rho_2(a)=\rho_1(a)\cdot\rho_2(a), \qq \forall\, a\in A^0,
\end{align}
and $\rho_3\circledast\rho_1:\sk\circ\f\Rto\sj\circ\g:\A\rto\C$ with
\begin{align}\label{E def-circledast}
\rho_3\circledast\rho_1(a)=(k^1\circ\rho_1(a))\cdot (\rho_3\circ g^0(a)), \qq \forall \, a\in A^0.
\end{align}
We denote by $1_\f:\f\Rto\f$ the identity natural transformations, hence $1_\f(x)=1_{f^0(x)}$.

Moreover, the category of strict morphisms $\sf SMor(\A,\B)=(\mathrm{SMor}^1(\A,\B)\rrto \mathrm{SMor}^0(\A,\B))$ is a groupoid with multiplication over $\mathrm{SMor}^1(\A,\B)$ being the vertical composition of natural transformation.
\end{remark}

\subsection{Refinements of Lie groupoid via open covers}
Let $\G=(G^1\rrto G^0)$ be a Lie groupoid. Let $\mc U=\{U_a\mid a\in\scr A\}$ be an open cover of $G^0$. Then we can form the {\bf refinement groupoid} (or {\bf pullback groupoid}) $\G[\mc U]$ as follows:
\begin{enumerate}
\item the object space is
\begin{align*}
\G[\mc U]^0:=\bigsqcup_{a\in\mc A} U_a,
\end{align*}
the disjoint union of open subsets in $\mc U$; there is a natural inclusion map $q^0_{\mc U}:\G[\mc U]^0\rto G^0$ that embeds each $U_a$ into $G^0$;
\item the arrow space is
\begin{align*}\begin{split}
\G[\mc U]^1&=\G[\mc U]^0\times_{q^0_{\mc U},s}G^1\times_{t,q^0_{\mc U}}\G[\mc U]^0
=\{(x,g,y)\in \G[\mc U]^0\times G^1\times \G[\mc U]^0 \mid g:q^0_{\mc U}(x)\rto q^0_{\mc U}(y)\};
\end{split}\end{align*}
there is also a natural map $q^1_{\mc U}:\G[\mc U]^1\rto G^1$ that projects to the second factor.
\end{enumerate}
With the obvious five structure maps we see that $\G[\mc U]:=(\G[\mc U]^1\rrto \G[\mc U]^0)$ is a Lie groupoid. Moreover
\begin{align*}
\sq_{\mc U}:=(q^0_{\mc U},q^1_{\mc U}): \G[\mc U]\rto\G
\end{align*}
is an equivalence of Lie groupoids.

If there is another open cover $\mc W=\{W_b\mid b\in \scr B\}$ that refines $\mc U$ via an refine map $\iota_{\mc W,\mc U}:\mc W\rto \mc U$, then it is direct to see that $\iota_{\mc W,\mc U}$ induces a Lie groupoid equivalence $\iota_{\mc W,\mc U}:\G[\mc W]\rto \G[\mc U]$. Moreover we have the following commutative diagram of Lie groupoid equivalences
\begin{align*}
\begin{split}
\xymatrix{
\G[\mc W]\ar[rr]^-{\iota_{\mc W,\mc U}} \ar[drr]_-{\sq_{\mc W}} &&
\G[\mc U]\ar[d]^-{\sq_{\mc U}} \\
&&\K.}
\end{split}
\end{align*}

Suppose $\phi:\G\rto\K$ is a strict morphism and $\mc U$ is an open cover of $K^0$. $\phi$ pulls back $\mc U$ to an open cover $\phi^*\mc U$ of $G^0$. Then we have a pullback strict morphism $\sq_\mc U^*\phi:\G[\phi^*\mc U]\rto\K[\mc U]$ and a commutative diagram of strict morphisms
\begin{align*}
\begin{split}
\xymatrix{
\G[\phi^*\mc U]\ar[rr]^-{\sq_{\phi^*\mc U} } \ar[d]_-{\sq_{\mc U}^*\phi} && \G\ar[d]^-\phi \\
\K[\mc U]\ar[rr]^-{\sq_{\mc U}} &&\K.}
\end{split}
\end{align*}

\subsection{Center of Lie groupoids}
We recall the  center for a groupoid that is defined in
\cite[\S 5.1]{Chen-Du-WangR2019}. Here we only consider Lie groupoids. Let $\A$ be a Lie groupoid. For every $x\in A^0$, let $\Gamma_x:=\{g\in A^1\mid g:x\rto x\}$ be the isotropy group of $x$. Let $ZA^0:=\{g\in Z(\Gamma_x) \mid x\in A^0\}\subseteq A^1$ where $Z(\Gamma_x)$ means the center of the group $\Gamma_x$. There is an $\A$-action\footnote{See for example \cite{Chen-Hu2006} for the definition of groupoid actions on spaces.} on $ZA^0$, of which the {\em anchor map} is the natural projection
\begin{align*}
p=s=t:ZA^0\rto A^0,\qq Z(\Gamma_x)\ni g\mapsto x
\end{align*}
and the {\em action map} is
\begin{align*}
\mu: A^1\times_{s,p}ZA^0\rto ZA^0,\qq (h,g)\mapsto h\inv g h.
\end{align*}
The center groupoid $\Z\A$ is the action groupoid
\begin{align}\label{E def-center-groupoid-ZA}
\Z\A:=\A\ltimes ZA^0=(A^1\times_{s,p}ZA^0\rrto ZA^0),
\end{align}
whose source map is projection to the second factor and target map is $\mu$. Other structure maps are obvious. We have a natural projection $\msf p=(p^0,p^1):\Z\A\rto\A$ with
\begin{align*}
p^0=p:ZA^0\rto A^0,\qq p^1=pr_1:A^1\times_{s,p}ZA^0\rto A^1.
\end{align*}

\begin{defn} \label{D center-of-groupoid}
The {\bf center $Z_\A$} of $\A$ is the set of global sections of $\sf p:\Z\A\rto \A$.
\end{defn}
It is clear that $Z_\A$ is an abelian group.

\begin{remark}\label{R smoothness-of-ZA}
Note that $ZA^0$ is not a (disjoint union of) smooth submanifold of $A^1$ in general, it can be very singular. However, we can also talk about the smoothness of sections of $\sf p: ZA\rto A$. For instance, we call a section $\sigma$ smooth if $\sigma:A^0\rto ZA^0\subseteq A^1$ is a smooth map when we view it as a map $A^0\rto A^1$. $Z_\A$ only contains smooth sections of $\sf p:ZA\rto A$ in this sense.

Let $M$ be a smooth manifold, then we say that a map $f:M\rto Z_\A$ is smooth if $M\times A^0\rto ZA^0\hrto A^1$ is smooth.
\end{remark}
\subsection{Automorphism groupoid of a Lie groupoid}

In \cite{Chen-Du-WangR2019} we studied the automorphism groupoid of a given groupoid. Here, we consider a simpler version which is used in this paper.

Let $\A=(A^1\rrto A^0)$ be a Lie groupoid. Denote the identity morphism of $\A$ by $\sf id_A$. A strict Lie groupoid morphism $\f:\A\rto \A$ is called an {\bf strict automorphism} of $\A$ if there is another strict Lie groupoid morphism $\g:\A\rto\A$ such that $\f\circ\g=\sf id_\A$ and $\g\circ\f=\sf id_\A$.

Let $\mathrm{SAut}^0(\A)$ be the set of strict automorphisms of $\A$. With composition of functors as the multiplication, it becomes a group. Let $\mathrm{SAut}^1(\A)$ be the space of natural transformations between strict automorphisms of $\A$. Then we have a groupoid of strict automorphisms
\begin{align*}
\sf SAut(A)=(\mathrm{SAut}^1(\A)\rrto \mathrm{SAut}^0(\A)).
\end{align*}
The structure maps are obvious, and the multiplication in $\mathrm{SAut}^1(\A)$ is the vertical composition ``$\odot$'' between natural transformations\footnote{In fact, $\sf SAut(A)$ is a subgroupoid of $\sf SMor(A,A)$.}. Let $s_{saut}, t_{saut}: \mathrm{SAut}^1(\A)\rto \mathrm{SAut}^0(\A)$ denote the source and target maps. Set $s_{saut}^{-1}(\sf id_A)$ to be $N_\A$.

\begin{prop} \label{P SAut}
We have the following facts.
\begin{enumerate}
\item $N_\A$ is a group with respect to the horizontal composition $\circledast$ in {\bf 2Gpd}.
\item $\sf SAut(A)$ is a quotient groupoid $\mathrm{SAut}^0(\A)
\rtimes N_\A$.
\item The stabilization of $N_\A$ action at $\sf id_\A$ is
$Z_\A$. So is the stabilization of other automorphisms in $\mathrm{SAut}^0(\A)$.
\end{enumerate}
\end{prop}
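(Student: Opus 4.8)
The plan is to treat the three assertions in turn, with the $2$-category axioms of $\mathbf{2Gpd}$ recorded in Remark \ref{R 2-cat-of-grpd} (associativity and unitality of $\odot$ and $\circledast$, and the interchange law relating them) as the only engine. Throughout I identify an element of $N_\A=s_{saut}^{-1}(\mathrm{id}_\A)$ with a natural transformation $\sigma:\mathrm{id}_\A\Rto\g$ for a uniquely determined $\g\in\mathrm{SAut}^0(\A)$: such a $\sigma$ is a smooth map $\sigma:A^0\rto A^1$ with $\sigma(x):x\rto g^0(x)$ satisfying $a\cdot\sigma(t(a))=\sigma(s(a))\cdot g^1(a)$ for all $a\in A^1$ by \eqref{E commut-diag-natural-trans}, so that $g^0=t\circ\sigma$ and $g^1(a)=\sigma(s(a))\inv\cdot a\cdot\sigma(t(a))$. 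I write $\pi(\sigma):=\g$ for this target automorphism.

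For (1), I would first check that $\circledast$ restricts to a binary operation on $N_\A$: for $\sigma,\tau\in N_\A$ the horizontal composite $\tau\circledast\sigma$ has source $\mathrm{id}_\A\circ\mathrm{id}_\A=\mathrm{id}_\A$ and target $\pi(\tau)\circ\pi(\sigma)$, again an automorphism, so $\tau\circledast\sigma\in N_\A$ and $\pi(\tau\circledast\sigma)=\pi(\tau)\circ\pi(\sigma)$. The unit is the trivial natural transformation $\mathbf 1:\mathrm{id}_\A\Rto\mathrm{id}_\A$, $\mathbf 1(x)=1_x$, which is a two-sided identity for $\circledast$ by unitality of whiskering, and associativity is exactly associativity of horizontal composition. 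The only point needing a short computation is inverses: for $\sigma:\mathrm{id}_\A\Rto\g$ I would set
\[
\sigma\inv(x):=\sigma\big((g\inv)^0(x)\big)\inv,\qq x\in A^0,
\]
using invertibility of $\g$ in $\mathrm{SAut}^0(\A)$, then verify from the naturality of $\sigma$ that $\sigma\inv$ is a natural transformation $\mathrm{id}_\A\Rto\g\inv$ with $\sigma\inv\circledast\sigma=\mathbf 1=\sigma\circledast\sigma\inv$. Hence $N_\A$ is a group and $\pi:N_\A\rto\mathrm{SAut}^0(\A)$ is a group homomorphism.

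For (2), the homomorphism $\pi$ makes $N_\A$ act on the set $\mathrm{SAut}^0(\A)$ on the right by $\f\cdot\sigma:=\f\circ\pi(\sigma)$, and I claim $\sf SAut(A)$ is the associated action (quotient) groupoid. The comparison is built by whiskering: send $(\f,\sigma)\in\mathrm{SAut}^0(\A)\times N_\A$ to $\mathrm{id}_\f\circledast\sigma$, a natural transformation $\f\Rto\f\circ\pi(\sigma)$. I would show this is a bijection onto $\mathrm{SAut}^1(\A)$ with inverse $\eta\mapsto\big(s_{saut}(\eta),\,\mathrm{id}_{s_{saut}(\eta)\inv}\circledast\eta\big)$, the two composites reducing to identities by associativity of $\circledast$ together with $\mathrm{id}_{\f\inv}\circledast\mathrm{id}_\f=\mathrm{id}_{\mathrm{id}_\A}$ and $\mathrm{id}_{\mathrm{id}_\A}\circledast\eta=\eta$. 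That this bijection respects sources, targets and units is immediate. The essential step, and the main obstacle, is to match the multiplications: using the interchange law one computes
\[
(\mathrm{id}_\f\circledast\sigma)\odot(\mathrm{id}_{\f\circ\pi(\sigma)}\circledast\tau)=\mathrm{id}_\f\circledast(\sigma\circledast\tau),
\]
which is precisely the composition rule $(\f,\sigma)\cdot(\f\circ\pi(\sigma),\tau)=(\f,\sigma\circledast\tau)$ of the action groupoid. Getting this identity cleanly requires care with the composability constraints of the interchange law — one must insert identity $2$-cells such as $\sigma=\sigma\odot\mathrm{id}_{\pi(\sigma)}$ and $\tau=\mathrm{id}_{\mathrm{id}_\A}\odot\tau$, and whisker by $\mathrm{id}_{\f\inv}$, before interchange applies — so this is where I expect the real bookkeeping to lie.

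For (3), the stabilizer of any $\f\in\mathrm{SAut}^0(\A)$ under this action is $\{\sigma\in N_\A:\f\circ\pi(\sigma)=\f\}=\ker\pi$, manifestly independent of $\f$; this already yields the second sentence once the first is proved. It then remains to identify $\ker\pi$, the natural transformations $\sigma:\mathrm{id}_\A\Rto\mathrm{id}_\A$, with $Z_\A$. Such a $\sigma$ satisfies $\sigma(x):x\rto x$ and $\sigma(s(a))\cdot a=a\cdot\sigma(t(a))$ for all $a\in A^1$; taking $a\in\Gamma_x$ forces $\sigma(x)\in Z\Gamma_x$, so $\sigma$ is a smooth section $x\mapsto\sigma(x)$ of $ZA^0\rto A^0$, while $a\inv\cdot\sigma(s(a))\cdot a=\sigma(t(a))$ says exactly that this section is equivariant for the action map $\mu$ of \eqref{E def-center-groupoid-ZA}. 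I would then check that $\sigma\mapsto\big(\sigma,\,a\mapsto(a,\sigma(s(a)))\big)$ is precisely a global section of $\sf p:\Z\A\rto\A$, giving the bijection $\ker\pi\cong Z_\A$. This last part is a direct unwinding of the definition of the center groupoid and should carry no genuine difficulty.
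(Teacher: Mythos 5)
Your proposal is correct and follows essentially the same route as the paper's proof: you realize $N_\A$ as the group of natural transformations $\mathrm{id}_\A\Rto\g$ under $\circledast$ with the same inverse formula $\sigma\inv(x)=\sigma\big((g\inv)^0(x)\big)\inv$ (the paper writes it as $\sigma\big((t\circ\sigma)\inv(x)\big)\inv$, which agrees since $g^0=t\circ\sigma$), identify $\sf SAut(A)$ with the action groupoid by whiskering plus the interchange law, and identify the ($\f$-independent) stabilizer $\ker\pi$ with $Z_\A$ by reading the naturality square \eqref{E commut-diag-natural-trans} as an equivariant section of $\sf p:\Z\A\rto\A$. The only difference is a mirrored convention — you use the right action $\f\cdot\sigma=\f\circ\pi(\sigma)$ and left whiskering $\mathrm{id}_\f\circledast\sigma$, whereas the paper uses the left action $\alpha(\g)=t_{saut}(\alpha)\circ\g$ and right whiskering $\alpha\circledast 1_\g$ — which is cosmetic rather than a genuinely different argument.
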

\begin{proof} (1)
we could write down composition $\circledast$ explicitly. First note that we have
\begin{equation}\label{eqn_na}
N_\A=\left\{\sigma\in C^\infty(A^0,A^1)\left|\substack{s\circ \sigma=id_{A^0}, \text{ and }\\ t\circ \sigma:A^0\rto A^0\text{ is a diffeomorphism}}\right.\right\}.
\end{equation}
 Let $\sigma,\gamma\in N_\A$, we have (comparing with \eqref{E def-circledast})
\begin{align*}
(\gamma\circledast\sigma)(x)=\sigma(x)\cdot \gamma(t(\sigma(x))),
\qq
\text{i.e.}
\qq
\xymatrix{
x\ar[r]^-{\sigma(x)} &
y \ar[r]^-{\gamma(y)} &z.}
\end{align*}
The inverse of $\sigma$ is given by $\sigma^{\circledast,-1}(x)=\sigma((t\circ\sigma)\inv(x))\inv$. That is, if $\sigma:\sf id_A\Rto\f$, then $\sigma^{\circledast,-1}:\sf id_A\Rto\f\inv$, and $\sigma\circledast \sigma^{\circledast,-1}=\sigma^{\circledast,-1}\circledast\sigma =1_{\sf id_A}:\sf id_A\Rto id_A$.

(2) First of all, we write down the action of $N_\A$ on $\mathrm{SAut}^0(\A)$. Take an $\alpha:{\sf id_A\Rto f} \in N_\A$ and $\sf g\in \mathrm{SAut}^0(\A)$. Then we set
\[
\alpha(\g):= t_{saut}(\alpha)\circ\g=\f\circ \g.
\]
This is an $N_\A$-action since
\[
(\beta\circledast\alpha)(\g)=
t_{saut}(\beta\circledast\alpha)\circ\g=
t_{saut}(\beta)\circ t_{saut}(\alpha)\circ \g=
\beta(\alpha(\g)).
\]

We next give an isomorphism $\phi:{\sf SAut(A)}\rto \mathrm{SAut}^0(\A)\rtimes N_\A$. We set \[
\phi^0=id_{\mathrm{SAut}^0(\A)},\qq \text{and}\qq
\phi^1(\alpha:\f\Rto\g)=(\f, \alpha\circledast 1_{\f\inv}:\sf id_A\Rto\g\circ\f\inv).
\]
The inverse of $\phi$ is
\[
\psi^0=id_{\mathrm{SAut}^0(\A)},\qq\text{and}\qq
\psi^1(\f,\alpha:\sf id_\A\Rto g)=\alpha\circledast 1_{\f}:\f\Rto \g\circ\f.
\]
We next show that $\psi$ is a morphism. We compute
\[
\psi^1\Big((\f\circ \g,\beta:\sf id_\A\Rto h)\circledast(\g,\alpha:\sf id_\A\Rto f)\Big)=\psi^1(\g,\beta\circledast\alpha:\sf id_A\Rto h\circ f)=\beta\circledast\alpha\circledast 1_\g
\]
and
\begin{align*}
&\psi^1(\f\circ \g,\beta:\sf id_\A\Rto h)\odot\psi^1(\g,\alpha:\sf id_\A\Rto f) \\
& =(\beta\circledast 1_{\f\circ\g}) \odot(\alpha\circledast 1_\g)=(\beta\circledast 1_{\f\circ\g}) \odot(1_{\sf id_A}\circledast(\alpha\circledast 1_\g))\\
&=(\beta\odot 1_{\sf id})\circledast (1_{\f\circ\g}\odot(\alpha\circledast 1_\g))
=\beta\circledast\alpha\circledast 1_\g,
\end{align*}
where we have used the fact that $\circledast$ is a composition functor in the 2-category {\bf 2Gpd}. Therefore $\psi$ is a morphism. Similarly $\phi$ is also a morphism. So ${\sf SAut(A)}\cong \mathrm{SAut}^0\rtimes N_\A$ as a groupoid.

(3)
Suppose that  $\sigma\in N_\A$ satisfies $\sigma:\sf id_\A\Rto id_\A$,
then by the commutative diagram \eqref{E commut-diag-natural-trans} we get $\sigma$ is a section of
$\sf p: ZA\rto A$, hence $\sigma\in Z_\A$. It follows from the definition of $N_\A$-action, the stabilization of every automorphism in $\mathrm{SAut}^0(\A)$ is the same as the one of $\sf id_A$. This finishes the proof.
\end{proof}

\begin{lemma}
The image $t_{saut}(N_\A)$ is a normal subgroup of $\mathrm{SAut}^0(\A)$.
\end{lemma}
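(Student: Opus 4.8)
The plan is to identify $t_{saut}(N_\A)$ explicitly and then verify the subgroup and normality axioms directly from the horizontal composition $\circledast$. Since $N_\A=s_{saut}\inv(\mathsf{id}_\A)$, each $\sigma\in N_\A$ is a natural isomorphism $\sigma:\mathsf{id}_\A\Rto\f$ whose target $\f=t_{saut}(\sigma)$ is a strict automorphism; conversely any natural transformation out of $\mathsf{id}_\A$ into an automorphism lies in $N_\A$ by the very definition of $N_\A$. Hence $t_{saut}(N_\A)$ is exactly the set of automorphisms naturally isomorphic to $\mathsf{id}_\A$, and the lemma reduces to the purely $2$-categorical fact that this set is closed under composition, inverses, and conjugation in $\mathrm{SAut}^0(\A)$.

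For the subgroup property I would argue as follows. First, $\mathsf{id}_\A=t_{saut}(1_{\mathsf{id}_\A})\in t_{saut}(N_\A)$. Next, given $\sigma:\mathsf{id}_\A\Rto\f$ and $\gamma:\mathsf{id}_\A\Rto\g$ in $N_\A$, reading off \eqref{E def-circledast} the horizontal composite is $\sigma\circledast\gamma:\mathsf{id}_\A\circ\mathsf{id}_\A\Rto\f\circ\g$, so the group product $\f\circ\g=t_{saut}(\sigma\circledast\gamma)$ again lies in $t_{saut}(N_\A)$. Finally, closure under inverses is the formula $\sigma\inv:\mathsf{id}_\A\Rto\f\inv$ established in the proof of Proposition \ref{P SAut}(1), which gives $\f\inv\in t_{saut}(N_\A)$.

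For normality, fix $\f\in t_{saut}(N_\A)$ with a witness $\sigma:\mathsf{id}_\A\Rto\f$ and an arbitrary $\h\in\mathrm{SAut}^0(\A)$. I would conjugate $\sigma$ by the identity $2$-cells of $\h$ and $\h\inv$, forming the doubly whiskered cell $1_\h\circledast\sigma\circledast 1_{\h\inv}$ (horizontal composition being associative in \textbf{2Gpd}). Composing the sources and targets through \eqref{E def-circledast} gives it the type $\h\circ\mathsf{id}_\A\circ\h\inv\Rto\h\circ\f\circ\h\inv$; since $\h\circ\h\inv=\mathsf{id}_\A$, this is a natural transformation out of $\mathsf{id}_\A$ into the automorphism $\h\circ\f\circ\h\inv$, hence an element of $N_\A$. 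Therefore $\h\circ\f\circ\h\inv=t_{saut}(1_\h\circledast\sigma\circledast 1_{\h\inv})\in t_{saut}(N_\A)$, which is precisely the asserted normality.

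The only delicate point—really just bookkeeping—is threading the conventions of \eqref{E def-circledast} correctly when computing the source and target of each (possibly doubly) whiskered horizontal composite: for instance verifying that $\sigma\circledast\gamma$ has target $\f\circ\g$ and that $1_\h\circledast\sigma\circledast 1_{\h\inv}$ has source $\h\circ\mathsf{id}_\A\circ\h\inv=\mathsf{id}_\A$. That the resulting cell automatically satisfies the conditions of \eqref{eqn_na} needs no separate check, since its target is a composite of automorphisms; and no analytic input is required beyond the smoothness already built into the definitions of $N_\A$ and $\mathrm{SAut}^0(\A)$.
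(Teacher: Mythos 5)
Your proposal is correct and follows essentially the same route as the paper: the normality argument via the doubly whiskered cell $1_\h\circledast\sigma\circledast 1_{\h\inv}:\h\circ{\sf id}_\A\circ\h\inv\Rto\h\circ\f\circ\h\inv$ is exactly the paper's proof (which even contains a typo, writing $\rho$ where $t_{saut}$ is meant, that you implicitly fix). Your additional explicit verification of the subgroup axioms is material the paper leaves implicit in Proposition \ref{P SAut}(1), not a different method.
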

\begin{proof}
Suppose $\sigma:\sf id_A\Rto f$, i.e. $\f\in t_{saut}(N_\A)$, and $\h\in\mathrm{SAut}^0(\A)$. We next show that $\h\circ\f\circ\h\inv\in t_{saut}(N_\A)$.  Let $1_\h:\h\Rto\h$ and $1_{\h\inv}:\h\inv\Rto\h\inv$ be the identity natural transformations over $\h$ and $\h\inv$ respectively. Then
\begin{align*}
1_\h\circledast \sigma \circledast 1_{\h\inv}:\sf h\circ id_A \circ h\inv\Rto \h\circ\f\circ\h\inv.
\end{align*}
It follows from $\sf h\circ id_A \circ h\inv=id_A$ that $
\h\circ\f\circ\h\inv=t_{saut}(1_\h\circledast \sigma \circledast 1_{\h\inv})$. This finishes the proof.
\end{proof}
To summarize, we have an exact sequence of groups
\begin{align}\label{E SES-1}
\xymatrix{
1\ar[r] & Z_\A\ar@{^(->}[r] & N_\A \ar[r]^-{t_{saut}} &\mathrm{SAut}^0(\A) \ar[r]^-\pi &\mathrm{SAut}^0(\A)/\text{Im}\,t_{saut}\ar[r] &1.
}
\end{align}

\begin{coro}
The coarse space $\overline{\sf{SAut}(\A)}$ is the group
isomorphic to
$ \mathrm{SAut}^0(\A)/\mathrm{Im}\,t_{saut}. $
\end{coro}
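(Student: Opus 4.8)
The plan is to unwind the definition of the coarse space and identify the resulting equivalence relation with the cosets of $\mathrm{Im}\,t_{saut}$. By definition $\overline{\sf SAut(A)}=\mathrm{SAut}^0(\A)/\mathrm{SAut}^1(\A)$ is the quotient of the object set $\mathrm{SAut}^0(\A)$ by the relation in which two strict automorphisms $\f,\g$ are identified precisely when $\sf SAut(A)$ carries an arrow between them, i.e. when there is a natural transformation $\alpha:\f\Rto\g$ (since $\sf SAut(A)$ is a groupoid this relation is automatically an equivalence relation). So the decisive step is to compute this relation, and I claim it reads
\[
\f\sim\g\qq\Longleftrightarrow\qq \g\circ\f\inv\in\mathrm{Im}\,t_{saut}.
\]

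For the forward implication, given $\alpha:\f\Rto\g$ I form the horizontal composite $\alpha\circledast 1_{\f\inv}$; by \eqref{E def-circledast} this is a natural transformation $\sf id_A\Rto\g\circ\f\inv$, hence an element of $N_\A=s_{saut}\inv(\sf id_A)$ whose target is $\g\circ\f\inv$, so that $\g\circ\f\inv=t_{saut}(\alpha\circledast 1_{\f\inv})\in\mathrm{Im}\,t_{saut}$. Conversely, if $\g\circ\f\inv=t_{saut}(\beta)$ with $\beta:\sf id_A\Rto\g\circ\f\inv$ in $N_\A$, then $\beta\circledast 1_{\f}:\f\Rto\g$ supplies the required arrow. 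This is nothing but the statement that, under the isomorphism $\phi:\sf SAut(A)\rto\mathrm{SAut}^0(\A)\rtimes N_\A$ of Proposition \ref{P SAut}, the arrows of $\sf SAut(A)$ become the action-groupoid arrows and two objects are connected iff they lie in the same $N_\A$-orbit; since the action is $\alpha(\g)=t_{saut}(\alpha)\circ\g$, the orbit of $\f$ is exactly the coset $\mathrm{Im}\,t_{saut}\circ\f$. Consequently, as a set, $\overline{\sf SAut(A)}$ is the orbit space $\mathrm{SAut}^0(\A)/\mathrm{Im}\,t_{saut}$.

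It remains to upgrade this set-level bijection to a group isomorphism. By the preceding Lemma $\mathrm{Im}\,t_{saut}$ is a \emph{normal} subgroup of the group $\mathrm{SAut}^0(\A)$, so the coset space $\mathrm{SAut}^0(\A)/\mathrm{Im}\,t_{saut}$ is itself a group and the composition of automorphisms descends to it; this is precisely the quotient map $\pi$ appearing in the exact sequence \eqref{E SES-1}. Transporting this group structure through the bijection of the previous paragraph makes $\overline{\sf SAut(A)}$ a group and yields the desired isomorphism $\overline{\sf SAut(A)}\cong\mathrm{SAut}^0(\A)/\mathrm{Im}\,t_{saut}$. I expect the only genuine point requiring care to be the bookkeeping of the horizontal composition $\circledast$ together with the source/target conventions, so that the orbit of $\f$ comes out as the left coset $\mathrm{Im}\,t_{saut}\circ\f$; once normality from the Lemma is invoked, the distinction between left and right cosets disappears and the identification is forced.
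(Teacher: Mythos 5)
Your proof is correct and follows essentially the route the paper intends: the paper states this corollary without proof as an immediate consequence of Proposition \ref{P SAut}(2) (which identifies $\sf SAut(A)$ with the action groupoid $\mathrm{SAut}^0(\A)\rtimes N_\A$, so the coarse space is the orbit space, i.e.\ the coset space of $\mathrm{Im}\,t_{saut}$) together with the preceding lemma giving normality of $\mathrm{Im}\,t_{saut}$. Your computation of the equivalence relation via $\alpha\circledast 1_{\f\inv}$ and $\beta\circledast 1_{\f}$ is exactly the bookkeeping already carried out in the proof of that proposition, so you have simply made the paper's implicit argument explicit.
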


Therefore the exact sequence \eqref{E SES-1} can also be written as
\begin{align}\label{E SES-2}
\xymatrix{
1\ar[r] & Z_\A\ar@{^(->}[r] & N_\A \ar[r]^-{t_{saut}} &\mathrm{SAut}^0(\A) \ar[r]^\pi &\overline{\sf{SAut}(\A)}\ar[r] &1.
}
\end{align}

As $\overline{\sf{SAut}(\A)}$ is a group, it also has a classifying groupoid $[\bullet/{\overline{\sf{SAut}(\A)}}]=(\overline{\sf{SAut}(\A)}\rrto \bullet)$. In the following, for a Lie groupoid $\K=(K^1\rrto K^0)$, we write a strict Lie groupoid morphism $\K\rto (\overline{\sf{SAut}(\A)}\rrto \bullet)$ as a morphism $K^1\rto \overline{\sf{SAut}(\A)}$ for simplicity.

\subsection{Symplectic orbifold groupoid}

We will also consider symplectic structure over orbifold groupoids (see for example \cite{Du-Chen-Wang2018}). Let $\A=(A^1\rrto A^0)$ be an orbifold groupoid. A {\bf symplectic form} over $\A$ is a symplectic form $\omega$ over $A^0$ such that $s^*\omega=t^*\omega$ over $A^1$. A {\bf symplectic orbifold groupoid} is an orbifold groupoid with a symplectic form.

Let $(\A,\omega)$ be a symplectic orbifold groupoid. An automorphism $\f\in\mathrm{SAut}^0(\A)$ is called a {\bf symplectomorphism}, if $f^{0,*}\omega=\omega$ over $A^0$. We also say $\f$ preserves the symplectic form.

\section{Locally topological product Lie groupoid fibrations}\label{S 3}

\subsection{Locally topological product fiber bundles}

\begin{defn}
A {\bf topological product fiber bundle} with fiber $\A=(A^1\rrto A^0)$ is a strict Lie groupoid morphism $\phi=(\phi^0,\phi^1):\G=(G^1\rrto G^0)\rto\K=(K^1\rrto K^0)$ such that
\begin{enumerate}
\item  $G^i=A^i\times K^i$ and $\phi^i$ is the projection to the second factor for $i=0,1$, and
\item  the unit map satisfies
$
u_\G=u_\A\times u_\K
$, and

\item  the source map satisfies
$
s_{\G}=s_\A\times s_\K
$, and
\item  the target map satisfies $t_\G(\alpha, 1_x)=(t_\A(\alpha),x)$ for every $\alpha\in A^1$ and $x\in K^0$.
\end{enumerate}
Then one can see that for every object $x\in K^0$, the fiber of $\phi$ over $x$
\[
\left((\phi^1)\inv(1_x)\rrto (\phi^0)\inv(x)\right)=(A^1\times\{1_x\}\rrto A^0\times\{x\})
\]
is isomorphic to $\A$ as a Lie groupoid via projecting $A^1\times\{1_x\}$ to $A^1$ and $A^0\times\{x\}$ to $A^0$.

A strict Lie groupoid morphism $\G\xrightarrow{\phi}\K$ is called a {\bf topologically trivial fiber bundle} with fiber $\A$ if $\G$ is isomorphic to a topological product fiber bundle $\G'\xrightarrow{\phi'} \K$ with fiber $\A$, i.e. we have the commutative diagram
\[
\xymatrix{\G\ar[r]^-\cong \ar[d]_-{\phi}& \G'\ar[d]^{\phi'} \\
\K\ar@{=}[r] &\K.}
\]
A strict Lie groupoid morphism $\G\xrightarrow{\phi}\K$ is called a {\bf locally topological product fiber bundle} with fiber $\A$ if
there exists an open cover $\mc U=\{U_a\}_{a\in\scr A}$ of $K^0$ such that the pullback strict morphism
$
    \sq_{\mc U}^*\phi:\G[\phi^*\mc U]\rto\K[\mc U]
$
    is a topologically trivial fiber bundle with fiber $\A$.
\end{defn}

From this definition we see that
the  {kernel}\footnote{The kernel $\ker\phi$ is a subgroupoid of $\G$. So we have the restriction morphism $\phi:\ker\phi\rto\K$ with images being $(u(K^0)\rrto K^0)=(K^0\rrto K^0)$, the trivial groupoid that representing the manifold $K^0$. So we may view $\ker \phi$ as a groupoid fiber bundle over $K^0$ and write it as $\phi:\ker\phi\rto K^0$.} of a locally topological product fiber bundle $\G\xrightarrow{\phi}\K$,
\begin{align}\label{E def-ker-phi}
\ker\phi:=(\ker\phi^1\rrto G^0)=\phi\inv(u(K^0)\rrto K^0),
\end{align}
is a locally trivial bundle of groupoids over $K^0$ with fiber isomorphic to $\A$.

\begin{defn}
Two topologically trivial fiber bundles with fiber $\A$, $\G_i\xrightarrow{\phi_i}\K_i,i=1,2,$ are {\bf isomorphic} if there is a groupoid isomorphism $\G_1\rto \G_2$ satisfying the following commutative diagram

\[
\xymatrix{
\ker\phi_1\ar@{^(->}[r] \ar@{=}[d] &
\G_1\ar[d]^-\cong \ar[r]^-{\phi_1} &
\K_1\ar@{=}[d]\\
\ker\phi_2\ar@{^(->}[r] &
\G_2\ar[r]^-{\phi_2}&
\K_2.}
\]

Let $\G_i \xrightarrow{\phi_i}\K_i$, $i=1,2$ be two locally topological product fiber bundles with fiber $\A$. We say that they are {\bf equivalent}, if there are refinements $\mc U_i,i=1,2,$ of
$\K_i$ such that the topologically trivial bundles $\G_1[\mc U_1]$ and $\G_2[\mc U_2]$ are isomorphic.
\end{defn}

It is direct to see that this is an equivalence relation between all locally topological product fiber bundles.

\subsection{Locally topological product Lie groupoid fibrations}

Given a topological product fiber bundle $\G=\A\times\K\xrightarrow{\phi}\K$, for each arrow $\xi:x\rto y$ in $K^1$, it induces
a strict morphism from $(\A,x)$ to $(\A,y)$ via
\begin{eqnarray}
&& \Lambda_{\xi}^0: (a,x)\xrightarrow{(1_a,\xi)} (b,y); \;\;\; b=t_\G(1_a,\xi)\\
&& \Lambda_\xi^1: (\alpha,1_x)\rto (\beta,1_y); \;\;\;
\beta=(1_{s_\A(\alpha)}, \xi)^{-1}(\alpha,1_x)
(1_{t_\A(\alpha)}, \xi).
\end{eqnarray}
This induces a morphism from $\A$ to $\A$
$$
\A=(\A,x)\xrightarrow{(\Lambda^0_\xi,\Lambda^1_\xi)}(\A,y) =\A,
$$
which we still denote  by $\Lambda_\xi$. Hence we have a
map
\begin{align}\label{E pre-action-map}
\Lambda: K^1\rto \mathrm{SMor}^0(\A,\A);\;\;\; \xi\mapsto \Lambda_\xi.
\end{align}
From the definition of topological product fiber bundle one can easily see that
\begin{lemma}\label{L pre-action-map}
(1)
The map $\Lambda$ is smooth in the sense that the induced map $K^1\times \A\rto \A$ is smooth,
(2)
$\Lambda_{1_x}=\sf id_\A$ for every $1_x\in K^1$,
(3)
$\Lambda$ induces a morphism
$
\bar \Lambda=\pi\circ\Lambda:K^1\rto \overline{\sf SMor(A,A)}.
$
\end{lemma}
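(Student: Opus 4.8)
The plan is to verify the three claims directly from the defining properties of a topological product fiber bundle. For (1), the induced map $K^1\times A\rto A$ sends $(\xi,a)\mapsto \Lambda_\xi^0(a)$ on objects and $(\xi,\alpha)\mapsto\Lambda_\xi^1(\alpha)$ on arrows; by the explicit formulas defining $\Lambda_\xi^0$ and $\Lambda_\xi^1$, these are built out of the structure maps $t_\G$, $u_\A$, $u_\K$, the inverse map $i_\G$, and the multiplication $m_\G$ of $\G$, all of which are smooth since $\G$ is a Lie groupoid. Hence I would simply observe that $\Lambda_\xi^0(a,x)=t_\G(1_a,\xi)$ and $\Lambda_\xi^1(\alpha,1_x)=(1_{s_\A(\alpha)},\xi)\inv(\alpha,1_x)(1_{t_\A(\alpha)},\xi)$ depend smoothly on $(\xi,a)$ and $(\xi,\alpha)$ jointly, which is exactly the asserted smoothness.

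For (2), when $\xi=1_x$ is a unit arrow I would plug $1_x$ into the formulas. On objects, property (4) of the product bundle gives $t_\G(1_a,1_x)=(t_\A(1_a),x)=(a,x)$ since $t_\A(1_a)=a$; using property (3) that $u_\G=u_\A\times u_\K$, one checks $(1_a,1_x)=u_\G(a,x)$, so $\Lambda_{1_x}^0$ is the identity on objects. On arrows, substituting $\xi=1_x$ and using that $(1_{s_\A(\alpha)},1_x)$ and $(1_{t_\A(\alpha)},1_x)$ are the unit arrows $u_\G$ over the relevant objects (again by property (3)), the conjugation formula collapses via the groupoid axioms $1^{-1}\cdot\alpha'\cdot 1=\alpha'$ to give $\Lambda_{1_x}^1(\alpha,1_x)=(\alpha,1_x)$. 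Thus $\Lambda_{1_x}=\sf id_\A$.

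For (3), I would first establish that $\Lambda$ descends to a genuine groupoid morphism after passing to the coarse space of $\sf SMor(A,A)$. The point is that $\Lambda$ as a map on $K^1$ need not be strictly multiplicative, $\Lambda_{\xi\eta}$ and $\Lambda_\xi\circ\Lambda_\eta$ may differ by a natural transformation; but upon applying the projection $\pi:\mathrm{SMor}^0(\A,\A)\rto\overline{\sf SMor(A,A)}$ to the coarse space (where naturally isomorphic strict morphisms are identified), this discrepancy vanishes. Concretely, for composable $\xi:x\rto y$ and $\eta:y\rto z$ in $K^1$, one exhibits a natural transformation $\Lambda_{\xi\eta}\Rto\Lambda_\xi\circ\Lambda_\eta$ coming from the associativity of the conjugation formula and the multiplication in $\G$, so that $\bar\Lambda(\xi\eta)=\bar\Lambda(\xi)\cdot\bar\Lambda(\eta)$ in the group $\overline{\sf SMor(A,A)}$. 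Combined with (2), which guarantees $\bar\Lambda$ sends units to the identity, this makes $\bar\Lambda=\pi\circ\Lambda$ a strict morphism from $\K$ to the classifying groupoid $(\overline{\sf SMor(A,A)}\rrto\bullet)$.

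The main obstacle is the last step: verifying that the two strict morphisms $\Lambda_{\xi\eta}$ and $\Lambda_\xi\circ\Lambda_\eta$ are naturally isomorphic rather than equal, and pinning down the connecting natural transformation explicitly. This requires carefully tracking the conjugating arrows $(1_{s_\A(\alpha)},\xi)$ through the composition and using property (4) together with the groupoid axioms of $\G$ to show the defect is exactly a natural transformation of strict morphisms; once that is in hand, the descent to the coarse space and hence the morphism property of $\bar\Lambda$ follows formally.
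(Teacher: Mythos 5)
Your proposal is correct and follows the same route as the paper, which states this lemma without proof as an immediate consequence of the definition of a topological product fiber bundle: smoothness of $\Lambda$ because $\Lambda^0_\xi$ and $\Lambda^1_\xi$ are built from the smooth structure maps of $\G$, triviality at units via properties (3) and (4), and descent to the coarse space $\overline{\sf SMor(A,A)}$ because the multiplicative defect is exactly a natural transformation, which the paper later makes explicit as the cofactor $\Omega$ in \eqref{E def-Omega} with naturality \eqref{E 3.3} coming from associativity in $G^1$. The only slip is notational: with the paper's left-to-right composition of arrows, the connecting natural transformation should go $\Lambda_{\xi\eta}\Rto\Lambda_\eta\circ\Lambda_\xi$ (apply $\Lambda_\xi$ first, as in the paper's expression $\Lambda_{\xi\eta}\inv\Lambda_\eta\Lambda_\xi$), not $\Lambda_{\xi\eta}\Rto\Lambda_\xi\circ\Lambda_\eta$.
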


\begin{defn}
We call this $\Lambda$ the {\bf pre-action map} of $\G$ if the image of $\Lambda$ is in $\mathrm{SAut}^0(\A)\subseteq \mathrm{SMor}^0(\A,\A)$. In this case we call $\G=\A\times\K\xrightarrow{\phi}\K$ a {\bf topological product Lie groupoid fibration} with fiber $\A$ (or by $\A$), and the morphism
$
\bar \Lambda:K^1\rto\overline{\sf SAut(A)}
$
the {\bf band} of $\G=\A\times \K\xrightarrow{\phi}\K$.

A topologically trivial fiber bundle $\G\rto\K$ is called a {\bf topologically trivial Lie groupoid fibration} with fiber $\A$ (or by $\A$) if it is isomorphic to a topological product Lie groupoid fibration $\G'\xrightarrow{\phi'}\K$ with fiber $\A$. We call the band of $\G'\xrightarrow{\phi'}\K$ the band of $\G\rto \K$.
\end{defn}

\begin{defn}\label{D locally topological product fibration}
A locally topological product fiber bundle $\phi:\G\rto \K$ with fiber $\A$ is called {\bf a locally topological product Lie groupoid fibration} with fiber $\A$ (or by $\A$) if there is a refinement $\G[\phi^*\mc U]\rto \K[\mc U]$ that is a topologically trivial Lie groupoid fibration with fiber $\A$. We write the fibration as
\begin{equation}
1\rto \A\rto \G\xrightarrow{\phi}\K\rto 1.
\end{equation}
We call the band of $\G[\phi^*\mc U]\rto\K[\mc U]$ the band of $\G\rto\K$.
\end{defn}

\begin{defn}\label{D equivalence-extension}
Two topologically trivial Lie groupoid fibrations with fiber $\A$ over $\K$, $\G_i\xrightarrow{\phi_i}\K, i=1,2$ are {\bf isomorphic} if the underlying topologically trivial fiber bundle are isomorphic.

Two locally topological product Lie groupoid fibrations with fiber $\A$ over $\K$, $\G_i\xrightarrow{\phi_i} \K,i=1,2$ are {\bf equivalent} if the underlying locally topological product fiber bundles are equivalent.
\end{defn}

\begin{example}\label{Example 1}
Let $A$ and $K$ be two finite groups and $G$ be a group extension of $K$ by $A$, i.e. $1\rto A\xrightarrow{i}G\xrightarrow{j}K\rto 1$. Suppose $G\cong K\ltimes_{(\omega,f)}A$ with $\omega:K\rto \mathrm{Aut}(A)$ and $f: K\times K\rto A$, where $\omega$ is a lifting of the band $K\rto\mathrm{Out}(A)$ of the group extension and $f$ is the associated cofactor. Suppose there is a smooth $K$-action\footnote{Here we use right action for convenience with our notation of multiplication of arrows in Lie groupoids.} on a smooth manifold $X$. Let $G$ act on $X$ via $G\xrightarrow{j} K$. So $A$ acts trivially on $X$ via $A\xrightarrow{i} G$. Let $G\ltimes X$ and $A\ltimes X$ be the corresponding action/translation groupoids. They are both Lie groupoids. Consider the Lie groupoid morphism $\phi=(\phi^0,\phi^1): G\ltimes X\rto [\bullet/K]$, where $\phi^0$ is the projection $X\rto \bullet$ and $\phi^1$ is $G\times X\xrightarrow{proj_1} G\xrightarrow{j} K$. It is obvious that $G\ltimes X$ is isomorphic to $(K\times_{(\omega,f)}A)\ltimes X$, and $G\ltimes X\rto [\bullet/K]$ is a topologically trivial Lie groupoid fibration with fiber $A\ltimes X$ since it is isomorphic to
\begin{align}\label{E K-times-A-ltimes X}
(K\times_{(\omega,f)}A)\ltimes X\rrto [\bullet/K],
\end{align}
a topological product Lie groupoid fibration with fiber $A\ltimes X$.
\end{example}
This example studies (locally) topological product Lie groupoid fibration over the classifying groupoids, which can be seen as a counterpart of gerbes, which are (locally) topological product Lie groupoid fibrations with fiber being classifying groupoids. Next we consider another example that constructs a new (locally) topological product Lie groupoid fibration out of certain groupoid action on a smooth manifold.

\begin{example}
Suppose we have a topological product Lie groupoid fibration $\phi:\G=\A\times \K\rto\K$. Let $X$ be a smooth manifold. Suppose $\G$ acts on $X$, whose anchor map is
\[
\pi:X\rto G^0=A^0\times K^0
\]
such that $X$ is a trivial fiber bundle with fiber being a smooth manifold $F$, and action map is
\[
\mu:G^1\times_{s,\pi}X\rto X.
\]
Then we get an action groupoid $\G\ltimes X=(G^1\times_{s,\pi}X\rrto X)$.

By assumption, $X=F\times A^0\times K^0$ with $\pi:X\rto G^0=A^0\times K^0$ being the projection to second factor, and
\[
G^1\times_{s,\pi}X=(A^1\times K^1)\times_{s,\pi}F\times A^0\times K^0=A^1\times K^1\times F=G^1\times F.
\]
So we have a projection $\psi=(\psi^0,\psi^1):\G\ltimes X\rto \K$ by
\[
\psi^0:X=F\times A^0\times K^0\rto K^0 \qq \mathrm{and} \qq
\psi^1:G^1\times F\rto K^1.
\]
Then one can see that $\phi:\G\ltimes X\rto \K$ is a topological product Lie groupoid fibration with fiber $\A\ltimes F$.

Generally, if $\phi:\G\rto \K$ is a locally topological product Lie groupoid fibration with fiber $\A$ and acts on a smooth manifold $X$, and there is an open cover $\mc U$ of $K^0$ such that $\G[\phi^*\mc U]\rto \K[\mc U]$ is a topological product Lie groupoid fibration and $X\rto G^0$ is trivialized over $\phi^*\mc U$. Then $\G\ltimes X\rto \G\rto \K$ is a locally topological product Lie groupoid fibration with fiber $\A\ltimes F$, where $F$ is the fiber of $X\rto G^0$. For example when $\K$ is a proper \'etale Lie groupoid, $\G$ is an \'etale gerbe over $\K$ and $X\rto G^0=K^0$ is locally trivial fibration, the assumptions on $\G$ and $X$ are satisfied.
\end{example}

\subsection{Generalized cocycles}

In this subsection we assume that $\G=\A\times \K\xrightarrow{\phi}\K$ is a topological product Lie groupoid fibration with pre-action map $\Lambda: K^1\rto \mathrm{SAut}^0(\A)$ given by \eqref{E pre-action-map}. For simplicity, for every arrow $\xi\in K^1$, we denote the corresponding strict automorphism by $\Lambda_\xi=(\Lambda_\xi,\Lambda_\xi)$. Recall that $\Lambda_{1_x}=\sf id_A$ for all $x\in K^0$.

In general the pre-action map $\Lambda:K^1\rto \mathrm{SAut}^0(\A)$ is not a morphism. The default of $\Lambda$ being a homomorphism is measured by the following smooth map:
\begin{align}\label{E def-Omega}
\begin{split}
\Omega:K^{[2]}\times A^0=&(K^1\times_{t,K^0,s}K^1)\times A^0\rto G^1,\,\,
\Omega(\xi,\eta,a):=
(\xi,1_a)\cdot(\eta,1_{\Lambda_{\xi}(a)})
\cdot(\xi\eta,1_{\Lambda_{\xi\eta}\inv\Lambda_\eta\Lambda_\xi(a)})\inv.
\end{split}\end{align}
We call $\Omega$ the {\bf cofactor} associated to the pre-action map $\Lambda$.

The images of $\Omega$ lie in the kernel of $\G=\A\times \K\xrightarrow{\phi} \K$, since
\[
\phi^1(\Omega(\xi,\eta,a))=\xi\cdot \eta \cdot(\xi\eta)\inv=1_{s(\xi)}.
\]
So in the rest of this subsection, we identify $\Omega$ as the map
\[
\Omega:K^{[2]}\times A^0\rto \ker\phi^1\rto A^1,
\]
and $\Omega(\xi,\eta,a)$ will denote an element in $A^1$ and also an element $(\Omega(\xi,\eta,a),1_{s(\xi)})$ in $G^1$.

We also have
\begin{align}\label{E Omega(alpha,1,a)=1}
\begin{split}
\Omega(\xi,1_{t(\xi)},a)
=(1_{s(\xi)},1_a)=1_{(s(\xi),a)},\qq
\Omega(1_{s(\xi)},\xi,a)&=(1_{s(\xi)},1_a)=1_{(t(\xi),a)}.
\end{split}
\end{align}

The associativity of multiplication in $G^1$ gives rise to some constraints on $\Lambda$ and $\Omega$, which we call the {\bf generalized cocycle condition}. We next write down this condition explicitly.

First of all take three composable arrows $\xi,\eta,\zeta\in K^1$, and suppose $x\xrightarrow \xi   y\xrightarrow \eta z\xrightarrow \zeta w$. Take an object $a \in A^0$. We have the following diagram of arrows in $G^1=K^1\times A^1$
\begin{align*}
\small{\xymatrix{
(x,a_7) \ar@/^3.5pc/[dddrrrrrr]^-{\qq\,(\xi\eta\zeta,1_{a_7})} &&&&&&\\
(x,a_5)\ar[rr]_-{(\xi,1_{a_5})} \ar[u]_-{\Omega(\xi,\eta\zeta,a_5)}
&&
(y,a_6)\ar@/^0.5pc/[ddrrrr]^-{\,\,(\eta\zeta,1_{a_6})}&&&&\\
(x,a_4)\ar@/^0.8pc/[drrrr]^-{(\xi\eta,1_{a_4})}
\ar@/^1.35pc/[uu]^-{\Omega(\xi\eta,\zeta,a_4)}&&
&&&\\
(x,a)\ar[rr]^-{(\xi,1_a)} \ar[u]_-{\Omega(\xi,\eta,a)}
\ar@/^1.1pc/[uu]^-{\Lambda_{\xi}\inv (\Omega(\eta,\zeta,a_1))}
\ar@{-->}[d]_-{\phi^0}
&&(y,a_1)\ar[rr]^{(\eta,1_{a_1})}
\ar@/_2.2pc/[uu]_-{\Omega(\eta,\zeta,a_1)}
\ar@{-->}[d]_-{\phi^0}
&& (z,a_2)\ar[rr]^-{(\zeta,1_{a_2})}
\ar@{-->}[d]_-{\phi^0}
&& (w,a_3) \ar@{-->}[d]_-{\phi^0} \\
x && y && z && w.}}
\end{align*}
Therefore
\[
\Lambda_{\xi\eta}\inv \Lambda_{\eta}\Lambda_{\xi}(a)=a_4,\qq
\Lambda_{\eta\zeta}\inv\Lambda_{\zeta}\Lambda_{\eta}(a_1)=a_6,\qq
\Lambda_{\xi\eta\zeta}\inv\Lambda_{\zeta}\Lambda_{\eta}\Lambda_{\xi}(a) =a_7.
\]
Then
\begin{eqnarray*}
&&\Omega(\xi,\eta,a)\cdot\Omega(\xi\eta,\zeta,a_4)\cdot (\xi\eta\zeta,1_{a_7}) \\
&=&(\xi,1_a)\cdot(\eta,1_{a_1})\cdot(\xi\eta,1_{a_4})\inv
\cdot (\xi\eta,1_{a_4})
\cdot(\zeta,1_{a_2})\cdot(\xi\eta\zeta,1_{a_7})\inv\cdot (\xi\eta\zeta,1_{a_7}) \\
&=&(\xi,1_a)\cdot(\eta,1_{a_1})\cdot(\zeta,1_{a_2})\\
&=&(\xi,1_a)\cdot\Omega(\eta,\zeta,a_1)\cdot(\eta\zeta,1_{a_6})\\
&=&(\xi,1_a)\cdot\Omega(\eta,\zeta,a_1)\cdot(\xi,1_{a_5})\inv \cdot (\xi,1_{a_5})\cdot(\eta\zeta,1_{a_6})\\
&=&(\xi,1_a)\cdot\Omega(\eta,\zeta,a_1) \cdot(\xi,1_{a_5})\inv
\cdot \Omega(\xi,\eta\zeta,a_5)\cdot (\xi\eta\zeta,1_{a_7}),
\end{eqnarray*}
which is
\begin{align}\label{E generalized-cocycle-1}
\Omega(\xi,\eta,a) \cdot \Omega(\xi\eta,\zeta,a_4)=&
\Lambda_\xi\inv(\Omega(\eta,\zeta,a_1))\cdot\Omega(\xi,\eta\zeta,a_5).
\end{align}
As a special case, let $\eta=\xi\inv$ with $a_1=\Lambda_\xi(a)$ and $a_2=\Lambda_{\xi\inv}(a_1)$; in addition with \eqref{E Omega(alpha,1,a)=1} we get
\begin{align}\label{E generalized-cocycle-2}
\Omega(\xi,\xi\inv,a)=
\Lambda_\xi\inv(\Omega(\xi\inv,\xi,a_1)). \end{align}
We call \eqref{E generalized-cocycle-1} the {\bf generalized cocycle condition}, and $(\Lambda,\Omega)$ a {\bf generalized cocycle}.

The analysis above shows that for a topological product Lie groupoid fibration $\G=\A\times\K\xrightarrow{\phi}\K$, the pre-action map $\Lambda:K^1\rto \mathrm{SAut}^0(\A)$ and the band $\bar \Lambda:K^1\rto \overline{\sf{SAut}(\A)}$ fit into the following diagram
\begin{align}\label{D band-lifting}
\begin{split}
\xymatrix{
& && K^1\ar[d]^-{\Lambda}\ar[dr]^-{\bar \Lambda} &&\\
1\ar[r] &Z_\A\ar@{^(->}[r] &N_\A\ar[r]^-{t_{saut}} & \mathrm{SAut}^0(\A)\ar[r]^-{\pi} &\overline{\sf{SAut}(\A)} \ar[r]&1.}
\end{split}
\end{align}
The obstruction of $\Lambda$ being a morphism is recorded by the smooth map $\Omega:K^{[2]}\times A^0\rto G^1$ (cf. \eqref{E def-Omega}). The pair $(\Lambda,\Omega)$ satisfies the generalized cocycle condition \eqref{E generalized-cocycle-1}.

\begin{example}\label{Example 2}
Consider the topological product Lie groupoid fibration in Example \ref{Example 1}, i.e. $(K\ltimes_{(\omega,f)}A)\ltimes X\rrto[\bullet/K]$ in \eqref{E K-times-A-ltimes X}. The pre-action map $\Lambda$ associated to this topological product Lie groupoid fibration $(K\ltimes_{(\omega,f)}A)\ltimes X\rrto[\bullet/K]$ is $\Lambda_k:A\ltimes X\rto A\ltimes X$ with
\[
\Lambda_k^0:X\rto X, \qq x\mapsto x\cdot k.
\]
being the $k$-action on $X$, and
\[
\Lambda^1_k:A\times X\rto A\times X, \qq (a,x)\mapsto (\omega(k)(a),x\cdot k).
\]
So $\Lambda^1_k=(\omega(k),\Lambda_k^0)$. Therefore the corresponding cofactor $\Omega:K\times K\times X\rto (K\ltimes_{(\omega,f)}A)\times X$ is determined by $f:K\times K\rto A$, that is
\[
\Omega(k_1,k_2,x)=(1,f(k_1,k_2),x).
\]
\end{example}

\section{Existence and obstruction of topological product Lie groupoid fibrations}\label{S 4}

In this section we study the existence and obstruction of topological product Lie groupoid fibrations. Let $\K$ and $\A$ be two Lie groupoids. Suppose we have a morphism  $\bar \Lambda:K^1\rto \overline{\sf SAut(A)}$ and a smooth lifting $\Lambda$ of $\bar \Lambda$ (cf. \eqref{D band-lifting}) satisfying $\Lambda_{1_x}:=\Lambda(1_x)=\sf id_\A$ for all $x\in K^0$. Recall from item (1) in Lemma \ref{L pre-action-map} that the lifting $\Lambda$ is smooth means that the induced map $K^1\times \A\rto \A$ is smooth.

\subsection{Existence of topological product Lie groupoid fibrations}
In this subsection we study the question when there is a topological product Lie groupoid fibration with fiber $\A$ over $\K$ whose band is $\bar\Lambda$.

Since $\Lambda$ is a lifting of $\bar\Lambda$, for two composable arrows $\xi,\eta\in K^1$ we have
\[
\pi(\Lambda_{\xi\eta}\inv\circ\Lambda_{\eta}\circ\Lambda_{\xi})=[\sf id_A]\in \overline{\sf{SAut}(\A)}.
\]
Therefore there is a natural transformation $\Phi_{\xi,\eta}:{\sf id}_\A\Rto \Lambda_{\xi\eta}\inv\circ\Lambda_{\eta}\circ\Lambda_{\xi}:\A\rto\A$. We define
\begin{align}\label{E def-Omega-2}
\begin{split}
&\Omega:K^{[2]}\times A^0\rto  A^1,
\qq 
\Omega(\xi,\eta,a):=\Phi_{\xi,\eta}(a).
\end{split}
\end{align}
In particular, when $\xi=1_x$ or $\eta=1_y$ we take $\Phi_{1_x,\eta}=\Phi_{\xi,1_y}=1_{\sf id_A}$, and then we have
\begin{align}\label{E Omega(1,alpha)=Omega(alpha,1)=1}
\Omega(1_x,\eta,a)=1_a,\qq \Omega(\xi,1_y,a)=1_a.
\end{align}

For each $\xi\in K^1$, we omit the superscripts and write the corresponding automorphism $\Lambda_\xi$ of $\A$ by
\[
\Lambda_\xi:=(\Lambda_\xi,\Lambda_\xi)
\]
for simplicity. Therefore we have the commutative diagram
\[
\xymatrix{
a \ar[rr]^-{\Omega(\xi,\eta,a)} \ar[d]_-{\alpha} &&
\Lambda_{\xi\eta}\inv\Lambda_\eta\Lambda_\xi(a)
\ar[d]^-{\Lambda_{\xi\eta}\inv\Lambda_\eta\Lambda_\xi(\alpha)}\\
b\ar[rr]^-{\Omega(\xi,\eta,b)} & &
\Lambda_{\xi\eta}\inv\Lambda_\eta\Lambda_\xi(b),}
\]
for every arrow $\alpha\in A^1$. So for every arrow $\alpha\in A^1$ we have
\begin{align}\label{E 3.3}
\Lambda_{\xi\eta}\inv\Lambda_\eta\Lambda_\xi(\alpha)=
\Omega(\xi,\eta,s(\alpha))\inv\cdot \alpha \cdot \Omega(\xi,\eta,t(\alpha)).
\end{align}

\begin{theorem}\label{T S-Omega-def-G-SOmega}
Let $\Lambda:K^1\rto \mathrm{SAut}^0(\A)$ be a smooth lifting of the morphism $\bar \Lambda:K^1\rto \overline{\sf{SAut}(\A)}$. Suppose that $\Omega$, defined in \eqref{E def-Omega-2}, is smooth. If $(\Lambda,\Omega)$ satisfies the generalized cocycle condition \eqref{E generalized-cocycle-1}, i.e.
\begin{align}\label{E generalized-cocycle-theorem-4}
\Omega(\xi,\eta,a) \cdot \Omega(\xi\eta,\zeta,a_4)
=\Lambda_\xi\inv(\Omega(\eta,\zeta,a_1))\cdot\Omega(\xi,\eta\zeta,a_5)
\end{align}
for every composable arrows $\xi,\eta$ in $K^1$ and objects $a\in A^0$, then there is a topological product Lie groupoid fibration with fiber $\A$ over $\K$, $\G\rto \K$, whose band is $\bar \Lambda$. Denote this topological product Lie groupoid fibration by $\A\rtimes_{\Lambda,\Omega}\K$.
\end{theorem}
\begin{proof}
We set $\G=\A\times \K=(A^1\times K^1\rrto A^0\times K^0)$. We next define the structure maps:
\begin{enumerate}
\item The source and target maps for $\G$ are
\begin{align}
s_\G(\alpha,\xi)&:=(s(\alpha),s(\xi)),\label{E def-s-in-K-ltimes-A}\\
t_\G(\alpha,\xi)&:=(t(\Lambda_\xi(\alpha)),t(\xi))= (\Lambda_\xi(t(\alpha)),t(\xi)),\label{E def-t-in-K-ltimes-A}
\end{align}
where the $s$ and $t$ on the right hand side are the source and target maps for $\K$ and $\A$. We do not use subscripts to distinguish them since obviously there is no ambiguity.

\item The multiplication over $A^1\times K^1$ is
\begin{align}\label{E def-multi-T-4-1}
\begin{split}
(\alpha,\xi)\cdot(\beta,\eta)
:=\left( \Omega(\xi,\eta,s(\alpha))\cdot\Lambda_{\xi\eta}\inv \Lambda_\eta\Lambda_\xi(\alpha)\cdot \Lambda_{\xi\eta}\inv \Lambda_\eta(\beta),\xi\cdot\eta\right),
\end{split}
\end{align}
where the multiplications $ \Omega(\xi,\eta,s(\alpha))\cdot\Lambda_{\xi\eta}\inv \Lambda_\eta\Lambda_\xi(\alpha)\cdot \Lambda_{\xi\eta}\inv \Lambda_\eta(\beta)$ and $\xi\cdot\eta$ are taken in $\A$ and $\K$ respectively, and when there is no ambiguity we will omit the ``$\cdot$'' in the following computation.

\item The unit map is $u(x,a):=(1_x,1_a)$.

\item The inverse arrow of an arrow $(\alpha,\xi)$ is
\[
(\alpha,\xi)\inv:=\left(\Lambda_\xi(\alpha\inv)\cdot \Lambda_{\xi\inv}\inv [\Omega(\xi,\xi\inv,s(\alpha))\inv],\xi\inv\right).
\]
\end{enumerate}

Note that $\Lambda$ is a smooth lifting of $\bar\Lambda$. As we remarked in (1), Lemma \ref{L pre-action-map}, the smoothness of $\Lambda$ means that the induced map $K^1\times \A\rto \A$ is smooth. Then by the assumption that $\Omega$ is smooth, one can see that all these structure maps above are smooth. On the other hand, since $s_\K, s_\A, t_\K, t_\A$ are all submersions and $\Lambda_\xi$ is an automorphism of $\A$, we see that both $s_\G$ and $t_\G$ are submersions.

We next show that these structure maps actually give rise to a Lie groupoid structure over $\G$. We only have to show that these structure maps satisfy the axioms for $\G$ being a category. Once we prove this, it is obvious that the projections to the first factors give rise to a topological product Lie groupoid fibration $\G\rto \K$ with fiber $\A$ over $\K$.

We first verify the associativity of the multiplication \eqref{E def-multi-T-4-1}. Take three composable arrows $(\alpha,\xi)$, $(\beta,\eta)$, $(\gamma,\zeta)\in G^1$. We have
\begin{eqnarray*}
&&
\left((\alpha,\xi)\cdot(\beta,\eta)\right)\cdot(\gamma,\zeta)\\
&\stackrel{\eqref{E def-multi-T-4-1}}{=}&\left( \Omega(\xi,\eta,s(\alpha))\cdot\Lambda_{\xi\eta}\inv\Lambda_\eta \Lambda_\xi(\alpha)\cdot\Lambda_{\xi\eta}\inv\Lambda_\eta(\beta), \xi\eta \right)\cdot(\gamma,\zeta)\\
&\stackrel{\eqref{E def-multi-T-4-1}}{=}&
\Big(\Omega(\xi\eta,\zeta,s(\alpha))\cdot \Lambda_{\xi\eta\zeta}\inv \Lambda_{\zeta}\Lambda_{\xi\eta} \Big[\Omega(\xi,\eta,s(\alpha))
\cdot\Lambda_{\xi\eta}\inv \Lambda_\eta \Lambda_\xi(\alpha) \cdot  \Lambda_{\xi\eta}\inv\Lambda_\eta(\beta)\Big] \cdot  \Lambda_{\xi\eta\zeta}\inv \Lambda_\zeta(\gamma), \xi\eta\zeta\Big)\\
&=&\Big( \Omega(\xi\eta,\zeta,s(\alpha))\cdot \Lambda_{\xi\eta\zeta}\inv \Lambda_\zeta\Lambda_{\xi\eta} [\Omega(\xi,\eta,s(\alpha))]\cdot
\Lambda_{\xi\eta\zeta}\inv \Lambda_\zeta\Lambda_\eta\Lambda_\xi(\alpha)\cdot \Lambda_{\xi\eta\zeta}\inv\Lambda_\zeta\Lambda_\eta(\beta)\cdot \Lambda_{\xi\eta\zeta}\inv\Lambda_\zeta(\gamma),\xi\eta\zeta
\Big)
\end{eqnarray*}
and
\begin{eqnarray*}
&&
(\alpha,\xi)\cdot \left((\beta,\eta) \cdot (\gamma,\zeta)\right)\\
&\stackrel{\eqref{E def-multi-T-4-1}}{=}& (\alpha,\xi)\cdot  \left(\Omega(\eta,\zeta,s(\beta))\cdot\Lambda_{\eta\zeta}\inv \Lambda_\zeta\Lambda_\eta(\beta)\cdot\Lambda_{\eta\zeta}\inv \Lambda_\zeta(\zeta),\eta\zeta\right)\\
&\stackrel{\eqref{E def-multi-T-4-1}}{=}&
\Big( \Omega(\xi,\eta\zeta,s(\alpha))\cdot \Lambda_{\xi\eta\zeta}\inv \Lambda_{\eta\zeta}\Lambda_\xi(\alpha)
\cdot \Lambda_{\xi\eta\zeta}\inv \Lambda_{\eta\zeta}\left[ \Omega(\eta,\zeta,s(\beta))\cdot\Lambda_{\eta\zeta}\inv \Lambda_\zeta\Lambda_\eta(\beta)\cdot\Lambda_{\eta\zeta}\inv \Lambda_\zeta(\zeta)\right], \xi\eta\zeta\Big)\\
&=&\Big( \Omega(\xi,\eta\zeta,s(\alpha))\cdot  \Lambda_{\xi\eta\zeta}\inv \Lambda_{\eta\zeta}\Lambda_\xi(\alpha)
\cdot  \Lambda_{\xi\eta\zeta}\inv \Lambda_{\eta\zeta}[\Omega(\eta,\zeta,s(\beta))] \cdot \Lambda_{\xi\eta\zeta}\inv \Lambda_\zeta\Lambda_\eta(\beta)\cdot \Lambda_{\xi\eta\zeta}\inv \Lambda_\zeta(\gamma), \xi\eta\zeta\Big).
\end{eqnarray*}

It is direct to see that the associativity follows from the following equality
\begin{align}\label{E pre-associativity}
\begin{split}
&
\Omega(\xi\eta,\zeta,s(\alpha))\cdot \Lambda_{\xi\eta\zeta}\inv \Lambda_\zeta\Lambda_{\xi\eta} [\Omega(\xi,\eta,s(\alpha))]\cdot
\Lambda_{\xi\eta\zeta}\inv \Lambda_\zeta\Lambda_\eta\Lambda_\xi(\alpha)\\
=\,&\Omega(\xi,\eta\zeta,s(\alpha))\cdot  \Lambda_{\xi\eta\zeta}\inv \Lambda_{\eta\zeta}\Lambda_\xi(\alpha)
\cdot  \Lambda_{\xi\eta\zeta}\inv \Lambda_{\eta\zeta}[\Omega(\eta,\zeta,s(\beta))].
\end{split}
\end{align}
To prove \eqref{E pre-associativity}, we apply \eqref{E 3.3} to both sides. We have
\begin{eqnarray*}
\mathrm{LHS}
&=&\Omega(\xi\eta,\zeta,s(\alpha))
\cdot   \Lambda_{\xi\eta\zeta}\inv \Lambda_\zeta\Lambda_{\xi\eta} \left[\Omega(\xi,\eta,s(\alpha))\cdot \Lambda_{\xi\eta}\inv \Lambda_\eta \Lambda_\xi(\alpha)\right]
\\&
\stackrel{\eqref{E 3.3}}{=}& \Omega(\xi\eta,\zeta,s(\alpha))\cdot \Big\{\Omega(\xi\eta,\zeta,s(\alpha))\inv
\cdot \left[\Omega(\xi,\eta,s(\alpha))
\cdot \Lambda_{\xi\eta}\inv \Lambda_\eta \Lambda_\xi(\alpha)\right]
\cdot \Omega(\xi\eta,\zeta,t(\Lambda_{\xi\eta}\inv\Lambda_\eta  \Lambda_\xi(\alpha)))\Big\}\\
&=&\left[\Omega(\xi,\eta,s(\alpha))
\cdot \Lambda_{\xi\eta}\inv \Lambda_\eta \Lambda_\xi(\alpha)\right]
\cdot \Omega(\xi\eta,\zeta,t(\Lambda_{\xi\eta}\inv\Lambda_\eta  \Lambda_\xi(\alpha)))\\
&\stackrel{\eqref{E 3.3}}{=} & \alpha\cdot \Omega(\xi,\eta,t(\alpha))\cdot \Omega(\xi\eta,\zeta,t(\Lambda_{\xi\eta}\inv\Lambda_\eta  \Lambda_\xi(\alpha)))
\end{eqnarray*}
and
\begin{eqnarray*}
\mathrm{RHS}&=&\Omega(\xi,\eta\zeta,s(\alpha))
\cdot\Lambda_{\xi\eta\zeta}\inv\Lambda_{\eta\zeta}\Lambda_\xi \left[\alpha\cdot\Lambda_\xi\inv(\Omega(\eta,\zeta,s(\beta)))\right]\\
&\stackrel{\eqref{E 3.3}}{=} & \Omega(\xi,\eta\zeta,s(\alpha))\cdot \Big\{\Omega(\xi,\eta\zeta,s(\alpha))\inv\cdot\alpha
\cdot\Lambda_\xi\inv[\Omega(\eta,\zeta,s(\beta))]\cdot \Omega(\xi,\eta\zeta,t(\Lambda_\xi\inv[ \Omega(\eta,\zeta,s(\beta))]))\Big\}\\
&=&\alpha\cdot \Lambda_\xi\inv[\Omega(\eta,\zeta,s(\beta))]\cdot \Omega(\xi,\eta\zeta,t(\Lambda_\xi\inv[ \Omega(\eta,\zeta,s(\beta))])).
\end{eqnarray*}
Then since $(\Lambda,\Omega)$ satisfies \eqref{E generalized-cocycle-theorem-4} we get
$
\mathrm{LHS}=\mathrm{RHS}.
$
Hence the multiplication is associative.

Now we show that the unit for this multiplication over $(a,x)$ is $u(a,x)=(1_a,1_x)$. We have
\begin{eqnarray*}
(1_a,1_x)\cdot (\alpha,\xi)
&=&\left(\Omega(1_x,\xi,a)\cdot\Lambda_{1_x\xi}\inv \Lambda_\xi\Lambda_{1_x}(1_a)\cdot\Lambda_{1_x\xi}\inv \Lambda_\xi(\alpha), 1_x\xi\right)\\
&=&(\Omega(1_x,\xi,a)\cdot\alpha,\xi)=(\alpha,\xi)
\end{eqnarray*}
where we have used $\Lambda_{1_x}=\sf id_\A$ and $\Omega(1_x,\xi,a)=1_a$. Similarly, since $\Omega(\xi,1_y,a)=1_a$ we get
\begin{eqnarray*}
(\alpha,\xi)\cdot (1_b,1_y)
&=&\left(\Omega(\xi,1_y,a)\cdot\Lambda_{\xi 1_y}\inv \Lambda_{1_y}\Lambda_\xi(\alpha)\cdot\Lambda_{\xi 1_y}\inv \Lambda_{1_y}(1_b),\xi 1_y\right)\\
&=&(\Omega(\xi,1_y,a)\cdot  \alpha,\xi)=(\alpha,\xi).
\end{eqnarray*}

Finally we show that the inverse map and multiplication are compatible. Note that for any $\xi\in K^1$ we have $\Lambda_{\xi\inv\xi}=\Lambda_{1_{t(\xi)}}=\sf id_\A$ and $\Lambda_{\xi\xi\inv}=\Lambda_{1_{s(\xi)}}=\sf id_\A$. Therefore
\begin{eqnarray*}
(\alpha,\xi)\cdot (\alpha,\xi)\inv
&=&\Big(\Omega(\xi,\xi\inv,s(\alpha)) \cdot  {\sf id_A}\Lambda_{\xi\inv}\Lambda_\xi(\alpha)
\cdot  {\sf id_A}\Lambda_{\xi\inv}\left[\Lambda_\xi(\alpha\inv)\cdot \Lambda_{\xi\inv}\inv[ \Omega(\xi,\xi\inv,s(\alpha))\inv] \right], \xi\xi\inv\Big)\\
&=&\left(\Omega(\xi,\xi\inv,s(\alpha))\cdot \Lambda_{\xi\inv}\Lambda_\xi(\alpha\alpha\inv)\cdot \Omega(\xi,\xi\inv,s(\alpha))\inv, 1_{s(\xi)}\right)\\
&=&\left(\Omega(\xi,\xi\inv,s(\alpha)) \cdot  1_{\Lambda_{\xi\inv}\Lambda_\xi(s(\alpha))}\cdot \Omega(\xi,\xi\inv,s(\alpha))\inv, 1_{s(\xi)}\right)\\
&=&(1_{s(\alpha)},1_{s(\xi)}),
\end{eqnarray*}
and
\begin{eqnarray*}
(\alpha,\xi)\inv\cdot (\alpha,\xi)
&=&\Big( \Omega(\xi\inv,\xi,t(\Lambda_\xi(\alpha)))
\cdot   {\sf id_A} \Lambda_\xi\Lambda_{\xi\inv}\left[\Lambda_\xi(\alpha\inv)\cdot   \Lambda_{\xi\inv}\inv[\Omega(\xi,\xi\inv,s(\alpha))]\inv\right]\cdot  {\sf id_A}\Lambda_\xi(\alpha),\xi\inv\xi\Big)\\
&=&\Big( \Omega(\xi\inv,\xi,t(\Lambda_\xi(\alpha_)))
\cdot\Lambda_\xi\left[\Lambda_{\xi\inv}\Lambda_\xi(\alpha)\inv \cdot   \Omega(\xi,\xi\inv,s(\alpha))\inv\cdot  \alpha\right], \xi\inv\xi\Big)\\
&\stackrel{\eqref{E 3.3}}{=} &\Big( \Omega(\xi\inv,\xi,t(\Lambda_\xi(\alpha)))\cdot   \Lambda_\xi(\Omega(\xi,\xi\inv,t(\alpha))) , \xi\inv\xi\Big)\\
&\stackrel{\eqref{E generalized-cocycle-2}}{=} &(1_{t(\Lambda_\xi(\alpha))},1_{t(\xi)}).
\end{eqnarray*}
Therefore with structure maps defined above $\G$ is a Lie groupoid, and the projection to the second factor give rise to a topological product Lie groupoid fibration $\G\rto \K$.

It is obvious from the definition of structure maps that the pre-action map is $\Lambda$, hence the band is $\bar \Lambda$. This finishes the proof.
\end{proof}

We have
\begin{coro}\label{C G-cong-KSOA}
Let $\G\rto \K$ be a topological product Lie groupoid fibration over $\K$ with fiber $\A$. Suppose its pre-action map $\Lambda$ is given by \eqref{E pre-action-map} and the associated cofactor $\Omega$ is given by \eqref{E def-Omega}. Then there is an isomorphism of topological product Lie groupoid fibrations
$
\G\cong\A\rtimes_{\Lambda,\Omega}\K.
$
\end{coro}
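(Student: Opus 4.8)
The plan is to exhibit the identity map on the common underlying spaces as the required isomorphism, so that the whole content reduces to checking that the multiplication carried by $\G$ is precisely the one written down in \eqref{E def-multi-T-4-1}. First I would record that $\G$ and $\A\rtimes_{\Lambda,\Omega}\K$ share the same object space $A^0\times K^0$ and arrow space $A^1\times K^1$, and that, by the definition of a topological product fiber bundle, their source maps both equal \eqref{E def-s-in-K-ltimes-A} and their unit maps both send $(a,x)$ to $(1_a,1_x)$. Hence I set $F=\mathrm{id}$ on $A^\bullet\times K^\bullet$; it automatically commutes with the two projections to $\K$ and restricts to the identity on $\ker\phi$, which by \eqref{E def-ker-phi} is the fiberwise copy of $\A$. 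By Definition \ref{D equivalence-extension} it then suffices to prove that $F$ is an isomorphism of Lie groupoids, i.e. that it respects the target and the multiplication (compatibility with units is already noted, and compatibility with inverses is then automatic). The target is the easy half: the defining property of the pre-action map \eqref{E pre-action-map} gives $t_\G(1_a,\xi)=(\Lambda_\xi(a),t(\xi))$, while $t_\G(\alpha,1_x)=(t(\alpha),x)$ by hypothesis, and composing a vertical arrow $(\alpha,1_x)$ with a horizontal arrow $(1_{t(\alpha)},\xi)$ yields $t_\G(\alpha,\xi)=(\Lambda_\xi(t(\alpha)),t(\xi))$, which is exactly \eqref{E def-t-in-K-ltimes-A}.

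The heart of the argument is the multiplication, and here I would isolate three relations that hold in $\G$ directly from the definitions. (R1): vertical arrows over a fixed $x$ multiply as in $\A$. (R2): the conjugation identity $(1_{s(\alpha)},\xi)\inv\cdot(\alpha,1_x)\cdot(1_{t(\alpha)},\xi)=(\Lambda_\xi(\alpha),1_y)$, which is nothing but the definition of $\Lambda_\xi$ on arrows. (R3): the horizontal-composition identity $(1_a,\xi)\cdot(1_{\Lambda_\xi(a)},\eta)=(\Omega(\xi,\eta,a),1_x)\cdot(1_{\Lambda_{\xi\eta}\inv\Lambda_\eta\Lambda_\xi(a)},\xi\eta)$, which is the definition \eqref{E def-Omega} of the cofactor after rearrangement. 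To compute $(\alpha,\xi)\cdot(\beta,\eta)$ in $\G$ I would first decompose each factor as a vertical arrow followed by a horizontal arrow, transport the inner horizontal arrow past the inner vertical arrow by (R2) (which produces the factor $\Lambda_\xi\inv(\beta)$ together with a shift of base point), merge the two resulting vertical arrows by (R1), and finally merge the two horizontal arrows by (R3) (which produces the cofactor $\Omega$). Reassembling gives the product $(\gamma,\xi\eta)$ with $A$-component $\gamma=\alpha\cdot\Lambda_\xi\inv(\beta)\cdot\Omega(\xi,\eta,\Lambda_\xi\inv(t(\beta)))$.

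It then remains to match this with the $A$-component $\Omega(\xi,\eta,s(\alpha))\cdot\Lambda_{\xi\eta}\inv\Lambda_\eta\Lambda_\xi(\alpha)\cdot\Lambda_{\xi\eta}\inv\Lambda_\eta(\beta)$ of \eqref{E def-multi-T-4-1}. The two expressions are visibly different, and this is exactly where the naturality relation \eqref{E 3.3} is used: applying \eqref{E 3.3} to $\alpha$ converts $\Omega(\xi,\eta,s(\alpha))\cdot\Lambda_{\xi\eta}\inv\Lambda_\eta\Lambda_\xi(\alpha)$ into $\alpha\cdot\Omega(\xi,\eta,t(\alpha))$, and applying it once more to the arrow $\Lambda_\xi\inv(\beta)$ shows $\Omega(\xi,\eta,t(\alpha))\cdot\Lambda_{\xi\eta}\inv\Lambda_\eta(\beta)=\Lambda_\xi\inv(\beta)\cdot\Omega(\xi,\eta,\Lambda_\xi\inv(t(\beta)))$ (using $s(\beta)=\Lambda_\xi(t(\alpha))$). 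The two $A$-components thus coincide, so the multiplications agree and $F$ is an isomorphism of Lie groupoids; being the identity on the projections to $\K$ and on $\ker\phi$, it is an isomorphism of topological product Lie groupoid extensions, which is the assertion.

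The main obstacle I anticipate is not conceptual but bookkeeping: justifying the vertical–horizontal decomposition $(\alpha,\xi)=(\alpha,1_x)\cdot(1_{t(\alpha)},\xi)$ inside $\G$ (so that the building blocks (R1)–(R3) genuinely generate the multiplication), and keeping all the object indices $s(\alpha),t(\alpha),\Lambda_\xi\inv(t(\beta)),\Lambda_{\xi\eta}\inv\Lambda_\eta(t(\beta))$ in agreement at each step. The only genuinely delicate point is that the naive $A$-component coming out of the decomposition does \emph{not} literally equal \eqref{E def-multi-T-4-1}; it is precisely \eqref{E 3.3}---which itself encodes that $\Omega$ arises from the natural transformation $\Phi_{\xi,\eta}$---that reconciles the two, so I would make sure to invoke it twice, once for $\alpha$ and once for $\Lambda_\xi\inv(\beta)$.
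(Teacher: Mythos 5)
There is a genuine gap, and it sits exactly at the step you dismiss as ``bookkeeping'': the factorization $(\alpha,\xi)=(\alpha,1_{s(\xi)})\cdot_\G(1_{t(\alpha)},\xi)$ does \emph{not} hold in a general topological product Lie groupoid extension, and neither does your target formula $t_\G(\alpha,\xi)=(\Lambda_\xi(t(\alpha)),t(\xi))$. The definition of a topological product fiber bundle only constrains $s_\G$, $u_\G$, and the value of $t_\G$ on arrows of the form $(\alpha,1_x)$; the multiplication of $\G$ and the target of an arrow $(\alpha,\xi)$ with $\xi$ not a unit are otherwise unconstrained, beyond the groupoid axioms and functoriality of the projection. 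Concretely, take the honest product $\A\times\K$ and transport its structure along a bijection $F(\alpha,\xi)=(h_\xi(\alpha),\xi)$ of $A^1\times K^1$, where the $h_\xi:A^1\rto A^1$ are source-preserving diffeomorphisms with $h_{1_x}=\mathrm{id}$: conditions (1)--(4) of the definition still hold, but for $h_\xi$ not of ``whiskering'' form---e.g.\ for the pair groupoid $\A=(A^0\times A^0\rrto A^0)$ with $A^0=\rone$ and $h_\xi(a,b)=(a,\,b+\epsilon(\xi)a)$, $\epsilon$ vanishing on unit arrows---the target of $(\alpha,\xi)$ depends on more than $t(\alpha)$, so \eqref{E def-t-in-K-ltimes-A} fails outright and the identity map cannot be an isomorphism onto $\A\rtimes_{\Lambda,\Omega}\K$. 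Your relations (R2) and (R3) are correct---they are literally the definitions of $\Lambda^1_\xi$ and of the cofactor \eqref{E def-Omega}---but they only govern products of vertical and ``horizontal'' arrows, and nothing in the axioms identifies a general arrow $(\alpha,\xi)$ of $\G$ with such a product in the given coordinates.

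This is precisely why the paper does not use the identity on arrows: its isomorphism is the identity on objects together with the straightening map $(\alpha,\xi)\mapsto\bigl(pr\bigl[(\alpha,\xi)\cdot_\G(1_{\Lambda_\xi\inv(b)},\xi)\inv\bigr],\xi\bigr)$, where $(b,t(\xi))=t_\G(\alpha,\xi)$ and $pr$ denotes the $A^1$-component, and whose inverse is exactly your factorization formula $(\alpha,\xi)\mapsto(\alpha,1_{s(\xi)})\cdot_\G(1_{t(\alpha)},\xi)$---used there as the \emph{definition} of the map $\A\rtimes_{\Lambda,\Omega}\K\rto\G$, not as an identity inside $\G$. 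With that correction your argument becomes, in substance, the verification the paper leaves implicit (``it is direct to verify''): your (R1)--(R3), together with the two applications of \eqref{E 3.3} (once to $\alpha$, once to $\Lambda_\xi\inv(\beta)$), show that this map intertwines the multiplication of $\G$ with \eqref{E def-multi-T-4-1}; and it visibly commutes with the projections to $\K$ and restricts to the identity on $\ker\phi$, as required by Definition \ref{D equivalence-extension}. So the computational core of your proposal is sound and even supplies detail the paper omits; what fails is only the claim that the change of coordinates is the identity.
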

\begin{proof}
The object part of this groupoid isomorphism is identity. Take an arrow $(\alpha,\xi)\in G^1=A^1\times K^1$. Suppose $t(\alpha,\xi)=b$. Then the arrow part of this groupoid isomorphism is
\[
(\alpha,\xi)\mapsto \left(\text{proj}_1\left[(\alpha,\xi)\cdot_\G (1_{\Lambda_\xi\inv(b)},\xi)\inv\right],\xi\right).
\]
The inverse on arrows is given by
$(\alpha,\xi)\mapsto (\alpha,1_{s(\xi)})\cdot_\G (1_{t(\alpha)},\xi).$ It is direct to verify that this is an isomorphism of topological product Lie groupoid fibrations over $\K$ with fiber $\A$.
\end{proof}

\subsection{Obstruction of topological product Lie groupoid fibrations}
Let $\Lambda$ be a smooth lifting of a morphism $\bar \Lambda:K^1\rto \overline{\sf{SAut}(\A)}$ and $\Omega:K^{[2]}\times A^0\rto A^1$ be a smooth family of natural transformations defined as \eqref{E def-Omega-2}. In this subsection we study the question when will $(\Lambda,\Omega)$ satisfy the generalized cocycle condition \eqref{E generalized-cocycle-1}.

Take an $a\in A^0$ and three composable arrows $\xi,\eta,\zeta$ in $K^1$, i.e. $(\xi,\eta,\zeta)\in K^{[3]}=K^1\times_{t,s}K^1\times_{t,s}K^1$.
Consider the following diagram of automorphisms and natural transformations
\[\xymatrix{
a_7\ar@/^3.5pc/[dddrrrrrr]^-{\Lambda_{\xi\eta\zeta}(a_7)} &&&&&&\\
a_5\ar[rr]_-{S_{\alpha_K}} \ar[u]_-{\Omega(\xi,\eta\zeta,a_5)}
&&
a_6\ar@/^0.8pc/[ddrrrr]^-{\Lambda_{\eta\zeta}}&&&&\\
a_4\ar@/^0.8pc/[drrrr]^-{\Lambda_{\xi\eta}}
\ar@/^0.8pc/[uu]^-{\Omega(\xi\eta,\zeta,a_4)}&&
&&&\\
a\ar[rr]^-{\Lambda_\xi} \ar[u]_-{\Omega(\xi,\eta,a)}
\ar@/^0.8pc/[uu]^-{\Lambda_\xi\inv(\Omega(\eta,\zeta,a_1))}
&&a_1\ar[rr]^{\Lambda_\eta}
\ar@/_2pc/[uu]_-{\Omega(\eta,\zeta,a_1)}
&& a_2\ar[rr]^-{\Lambda_\zeta} && a_3.}
\]
Take an arrow $\alpha\in\Gamma_a$, the isotropy group of $a$ in $\A$. Then by the definition of $\Omega$ we have the following equalities in $A^1$
\begin{eqnarray*}
&&\Omega(\xi\eta,\zeta,a_4)\inv \cdot \Omega(\xi,\eta,a)\inv\cdot \alpha \cdot\Omega(\xi,\eta,a)\cdot\Omega(\xi\eta,\zeta,a_4)\\
&\stackrel{\eqref{E 3.3}}{=}&
\Omega(\xi\eta,\zeta,a_4)\inv\cdot\Lambda_{\xi\eta}\inv\Lambda_\eta \Lambda_\xi(\alpha)\cdot \Omega(\xi\eta,\zeta,a_4)\\
&\stackrel{\eqref{E 3.3}}{=}&
\Lambda_{\xi\eta\zeta}\inv \Lambda_\zeta \Lambda_{\xi\eta}  \Lambda_{\xi\eta}\inv \Lambda_\eta \Lambda_\xi(\alpha)\\
&=&\Lambda_{\xi\eta\zeta}\inv \Lambda_\zeta\Lambda_\eta\Lambda_\xi(\alpha),
\end{eqnarray*}
and
\begin{eqnarray*}
&&\Omega(\xi,\eta\zeta,a_5)\inv \cdot \Lambda_\xi\inv[\Omega(\eta,\zeta,a_1)\inv]\cdot\alpha\cdot
\Lambda_\xi\inv[\Omega(\eta,\zeta_K,a_1)] \cdot\Omega(\xi,\eta\zeta,a_5)\\
&=& \Omega(\xi,\eta\zeta,a_5)\inv \cdot \Lambda_\xi\inv \left[\Omega(\eta,\zeta,a_1)\inv\cdot\Lambda_\xi(\alpha)\cdot \Omega(\eta,\zeta,a_1) \right]\cdot \Omega(\xi,\eta\zeta,a_5)\\
&\stackrel{\eqref{E 3.3}}{=}  & \Omega(\xi,\eta\zeta,a_5)\inv \cdot \Lambda_\xi\inv[\Lambda_{\eta\zeta}\inv\Lambda_\zeta \Lambda_\eta\Lambda_\xi(\alpha)]\cdot \Omega(\xi,\eta\zeta,a_5)\\
&\stackrel{\eqref{E 3.3}}{=}&\Lambda_{\xi\eta\zeta}\inv \Lambda_{\eta\zeta}\Lambda_\xi\Lambda_\xi\inv \Lambda_{\eta\zeta}\inv\Lambda_\zeta \Lambda_\eta\Lambda_\xi(\alpha) \\
&=&\Lambda_{\xi\eta\zeta}\inv\Lambda_\zeta\Lambda_\eta \Lambda_\xi(\alpha).
\end{eqnarray*}
It follows from these two equalities that the following arrow
\begin{align}\label{E def-Xi}
\begin{split}
\Xi(\xi,\eta,\zeta,a)
:&=\Omega(\xi,\eta,a)\cdot\Omega(\xi\eta,\zeta,a_4)
\cdot \Omega(\xi,\eta\zeta,a_5)\inv \cdot \Lambda_\xi\inv[\Omega(\eta,\zeta,a_1) \inv]
\end{split}
\end{align}
belongs to the center of $\Gamma_a$. Thus we get a smooth map
\[
\Xi:K^{[3]}\times A^0=K^1\times_{t,s}K^1\times_{t,s} K^1\times A^0\rto ZA^0\subseteq A^1.
\]
By \eqref{E 3.3} this map is invariant under the $\A$-action on $ZA^0$. Hence it gives rise to a smooth map (cf. Remark \ref{R smoothness-of-ZA})
\[
\Xi:K^{[3]}=K^1\times_{t,s}K^1\times_{t,s} K^1\rto Z_\A.
\]

It is obvious that
\begin{lemma}\label{L Xi=1-exist-extension}
When $\Xi(\cdot,\cdot,\cdot,a)\equiv 1_a$ for all $a\in A^0$, $(\Lambda,\Omega)$ satisfies the generalized cocycle condition \eqref{E generalized-cocycle-1}, and we get a topological product Lie groupoid fibration with band $\bar\Lambda$.
\end{lemma}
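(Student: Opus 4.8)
The plan is to read off directly from the defining formula \eqref{E def-Xi} that the vanishing of $\Xi$ is literally equivalent to the generalized cocycle condition \eqref{E generalized-cocycle-1}, and then to feed the resulting $(\Lambda,\Omega)$ into Theorem \ref{T S-Omega-def-G-SOmega}. In other words, $\Xi$ is nothing but the ``ratio'' of the two sides of \eqref{E generalized-cocycle-1}, so requiring it to be the identity arrow is the cocycle condition in disguise.

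Concretely, I would write $L(\xi,\eta,\zeta,a):=\Omega(\xi,\eta,a)\cdot\Omega(\xi\eta,\zeta,a_4)$ for the left-hand side of \eqref{E generalized-cocycle-1} and $R(\xi,\eta,\zeta,a):=\Lambda_\xi\inv(\Omega(\eta,\zeta,a_1))\cdot\Omega(\xi,\eta\zeta,a_5)$ for its right-hand side. Since $\Lambda_\xi\inv$ is a functor, one has $\Lambda_\xi\inv[\Omega(\eta,\zeta,a_1)\inv]=(\Lambda_\xi\inv[\Omega(\eta,\zeta,a_1)])\inv$, and hence in the left-to-right convention of the paper
\[
R\inv=\Omega(\xi,\eta\zeta,a_5)\inv\cdot\Lambda_\xi\inv[\Omega(\eta,\zeta,a_1)\inv].
\]
Substituting this into \eqref{E def-Xi} shows that the defining expression is exactly $\Xi=L\cdot R\inv$. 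The point that makes this meaningful is that $L$ and $R$ are coterminal arrows, both running from $a$ to the common object $\Lambda_{\xi\eta\zeta}\inv\Lambda_\zeta\Lambda_\eta\Lambda_\xi(a)$; this is precisely what the two conjugation computations preceding the lemma establish when they verify that $\Xi(\xi,\eta,\zeta,a)$ is a loop at $a$ lying in $Z\Gamma_a$. Granting that coterminality, the product $L\cdot R\inv$ is a well-defined element of $\Gamma_a$, and therefore $\Xi(\xi,\eta,\zeta,a)=1_a$ holds if and only if $L(\xi,\eta,\zeta,a)=R(\xi,\eta,\zeta,a)$, i.e. if and only if \eqref{E generalized-cocycle-1} holds at $(\xi,\eta,\zeta,a)$.

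It then follows that the hypothesis $\Xi(\cdot,\cdot,\cdot,a)\equiv 1_a$ for all $a\in A^0$ is the same as the generalized cocycle condition \eqref{E generalized-cocycle-1} holding for every composable triple $(\xi,\eta,\zeta)\in K^{[3]}$ and every $a\in A^0$. Applying Theorem \ref{T S-Omega-def-G-SOmega} to this $(\Lambda,\Omega)$ produces the topological product Lie groupoid extension $\A\rtimes_{\Lambda,\Omega}\K\rto\K$ with band $\bar\Lambda$, which is the asserted extension. I expect no genuine obstacle here: all the substantive work—that $\Xi$ takes values in $Z\Gamma_a$ and descends to a map $K^{[3]}\rto Z_\A$, and that a cocycle-satisfying pair $(\Lambda,\Omega)$ assembles into an honest extension—is already done before the lemma and in Theorem \ref{T S-Omega-def-G-SOmega}. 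The only care required is bookkeeping, namely keeping the left-to-right composition convention straight and confirming that the four factors of \eqref{E def-Xi} have matching sources and targets, after which the equivalence between $\Xi=1_a$ and \eqref{E generalized-cocycle-1} is a one-line cancellation in $\Gamma_a$.
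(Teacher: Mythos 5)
Your proposal is correct and is essentially the paper's own argument: the paper offers no separate proof (it declares the lemma obvious), precisely because \eqref{E def-Xi} exhibits $\Xi(\xi,\eta,\zeta,a)$ as the left-hand side of \eqref{E generalized-cocycle-1} composed with the inverse of its right-hand side---your coterminality check (both sides run from $a$ to $a_7=\Lambda_{\xi\eta\zeta}\inv\Lambda_\zeta\Lambda_\eta\Lambda_\xi(a)$) and the functoriality identity $\Lambda_\xi\inv[\Omega(\eta,\zeta,a_1)\inv]=(\Lambda_\xi\inv[\Omega(\eta,\zeta,a_1)])\inv$ being exactly the bookkeeping the paper leaves implicit. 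Invoking Theorem \ref{T S-Omega-def-G-SOmega} to produce $\A\rtimes_{\Lambda,\Omega}\K$ with band $\bar\Lambda$ is likewise the intended conclusion, so nothing is missing.
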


The morphism $\bar \Lambda:K^1\rto \overline{\sf{SAut}(\A)}$ induces a smooth $\K$-action on $Z_\A$ (see \S \ref{S appdndix-induced-K-action-on-ZA})
\[
\bar \Lambda:K^1\rto \mathrm{Aut}(Z_\A).
\]
Then we could define the groupoid cohomology $H^*_{\bar \Lambda}(\K,Z_\A)$ of $\K$ with coefficients in $Z_\A$ (see \S \ref{S appendix-groupoid-cohomology}).
\begin{theorem}\label{T Xi-cocycle}
$\Xi$ is a 3-cocycle in the cochain complex $C^*_{\bar \Lambda}(\K,Z_\A)$, hence represents a class $[\Xi]$ in $H^3_{\bar \Lambda}(\K,Z_\A)$. Moreover, this class $[\Xi]$ only depends on $\bar \Lambda$, not on $(\Lambda,\Omega)$.
\end{theorem}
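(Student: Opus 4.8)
The plan is to prove the two assertions separately. Everything rests on a single rewriting rule extracted from \eqref{E def-Xi}: since $\Xi(\xi,\eta,\zeta,a)$ is central in $\Gamma_a$ it can be moved past any factor, so \eqref{E def-Xi} is equivalent to
\begin{align*}
\Omega(\xi,\eta,a)\cdot\Omega(\xi\eta,\zeta,a_4)=\Xi(\xi,\eta,\zeta,a)\cdot\Lambda_\xi\inv[\Omega(\eta,\zeta,a_1)]\cdot\Omega(\xi,\eta\zeta,a_5),
\end{align*}
a copy of the generalized cocycle condition \eqref{E generalized-cocycle-1} with defect $\Xi$. This rule trades a product in which $\xi,\eta$ are composed first for one in which $\eta,\zeta$ are composed first (inside $\Lambda_\xi\inv$), at the cost of the central factor $\Xi(\xi,\eta,\zeta,a)$; it is the associativity constraint measured by $\Xi$, and it is the only identity I will feed into the computations below, together with \eqref{E 3.3} and the naturality squares \eqref{E commut-diag-natural-trans}.

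For the cocycle condition I would fix four composable arrows $\xi,\eta,\zeta,\tau\in K^1$ and an object $a\in A^0$, and reduce the triple product $\Omega(\xi,\eta,a)\cdot\Omega(\xi\eta,\zeta,\bullet)\cdot\Omega(\xi\eta\zeta,\tau,\bullet)$ to a common normal form along the two edge-paths of the MacLane pentagon. Along one path one first combines $(\xi,\eta,\zeta)$ and afterwards absorbs $\tau$; along the other one first combines $(\xi\eta,\zeta,\tau)$. Each elementary step is an application of the rewriting rule, and between steps one uses \eqref{E 3.3} to slide the $\Lambda$-conjugated factors past one another while tracking the shifting base points $a_1,a_4,a_5,\dots$. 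Because multiplication in $A^1$ is associative the two reductions must agree, and equating the accumulated central factors (sliding each to the front by centrality) yields
\begin{align*}
\Lambda_\xi\inv[\Xi(\eta,\zeta,\tau)]\cdot\Xi(\xi,\eta\zeta,\tau)\cdot\Xi(\xi,\eta,\zeta)=\Xi(\xi\eta,\zeta,\tau)\cdot\Xi(\xi,\eta,\zeta\tau).
\end{align*}
This is precisely the vanishing of the $\bar\Lambda$-twisted coboundary of $\Xi$, the factor $\Lambda_\xi\inv$ being the $\K$-action on $Z_\A$ from \S\ref{S appdndix-induced-K-action-on-ZA}; hence $\Xi$ is a $3$-cocycle and defines $[\Xi]\in H^3_{\bar\Lambda}(\K,Z_\A)$. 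The only obstacle here is bookkeeping: one must carry every base point correctly through each $\Lambda_\xi\inv$ and invoke centrality at the right moments, but there is no conceptual difficulty once the pentagon is laid out.

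For the independence of $[\Xi]$ I would separate the two ambiguities in $(\Lambda,\Omega)$. First, holding $\Lambda$ fixed, any two admissible choices of the natural transformations $\Phi_{\xi,\eta}:\mathsf{id}_\A\Rto\Lambda_{\xi\eta}\inv\Lambda_\eta\Lambda_\xi$ differ by an automorphism of $\mathsf{id}_\A$, so by Proposition \ref{P SAut}(3) they produce $\Omega'(\xi,\eta,a)=b(\xi,\eta)(a)\cdot\Omega(\xi,\eta,a)$ for a smooth $2$-cochain $b\in C^2_{\bar\Lambda}(\K,Z_\A)$; substituting into \eqref{E def-Xi} and collecting the central $b$-terms gives that $\Xi$ and $\Xi'$ differ by the coboundary $\delta b$, so $[\Xi]$ is unchanged. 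Second, if $\Lambda'$ is another smooth lift of the same band $\bar\Lambda$, then by \eqref{E SES-2} each $\xi$ admits a natural transformation $\vartheta_\xi:\Lambda_\xi\Rto\Lambda'_\xi$; horizontally composing $\vartheta_\xi,\vartheta_\eta$ and $\vartheta_{\xi\eta}$ exhibits the primed associator as a conjugate of the unprimed one, so that evaluating at $a$ writes $\Omega'$ in terms of $\Omega$ and the values $\vartheta_\bullet(\cdot)$. Inserting this into \eqref{E def-Xi}, the $\vartheta$-terms should telescope and leave $\Xi'/\Xi$ equal to $\delta$ of an explicit $2$-cochain. I expect this last step to be the main obstacle of the theorem: the $\vartheta_\xi$ take values in $N_\A$ rather than $Z_\A$, so one must verify that their combined contribution to the $Z_\A$-valued ratio $\Xi'/\Xi$ is genuinely the coboundary of a $Z_\A$-valued cochain, which comes down once more to careful use of \eqref{E 3.3} and \eqref{E commut-diag-natural-trans}. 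Combining the two reductions shows that $[\Xi]$ depends only on $\bar\Lambda$.
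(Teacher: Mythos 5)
Your overall architecture coincides with the paper's: the cocycle identity is verified by direct manipulation of the defining relation \eqref{E def-Xi} together with centrality, \eqref{E 3.3} and the naturality squares \eqref{E commut-diag-natural-trans}, and the independence of $[\Xi]$ is split into the same two ambiguities (fix $\Lambda$ and vary $\Omega$; then vary the lift of $\bar\Lambda$). For the first part, your rewriting rule $\Omega(\xi,\eta,a)\cdot\Omega(\xi\eta,\zeta,a_4)=\Xi(\xi,\eta,\zeta,a)\cdot\Lambda_\xi\inv[\Omega(\eta,\zeta,a_1)]\cdot\Omega(\xi,\eta\zeta,a_5)$ is the correct reading of \eqref{E def-Xi}, and your pentagon identity $\Lambda_\xi\inv[\Xi(\eta,\zeta,\tau)]\cdot\Xi(\xi,\eta\zeta,\tau)\cdot\Xi(\xi,\eta,\zeta)=\Xi(\xi\eta,\zeta,\tau)\cdot\Xi(\xi,\eta,\zeta\tau)$ is exactly $d\Xi=0$ in the paper's left-to-right conventions (two-edge path versus three-edge path, with the twist on the first term). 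The pentagon is a cleaner packaging of what the paper does by brute force: instead of a two-path reduction, the paper first proves a cyclic-permutation lemma ($\Xi=\Xi_1=\Xi_2=\Xi_3$, with suitably shifted base points) and then expands $d\,\Xi(\xi_1,\xi_2,\xi_3,\xi_4)$ directly, repeatedly recognizing middle blocks ($\mc A$, $\mc B$, $\mc C$ in the appendix) as $\A$-invariant $Z_\A$-valued sections in order to relocate base points. The bookkeeping you defer is precisely where those invariance arguments are needed, so this part is sound, just not executed at the level of detail the paper supplies.

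The one genuine loose end is the step you flag yourself: in your second case you hope the $N_\A$-valued transformations $\vartheta_\xi:\Lambda_\xi\Rto\Lambda''_\xi$ ``telescope and leave $\Xi''/\Xi$ equal to $\delta$ of an explicit $Z_\A$-cochain,'' and extracting a $Z_\A$-valued cochain from the $\vartheta$'s alone would indeed be painful, since they are not central. The paper's closing move avoids this entirely: transport the cofactor along the $\vartheta$'s, i.e.\ set $\Omega'(\xi,\eta):=\Omega(\xi,\eta)\odot\bigl(\vartheta_{\xi\eta}^{\circledast,-1}\circledast\vartheta_\eta\circledast\vartheta_\xi\bigr)$, which is an admissible cofactor for the new lift $\Lambda''$. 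Then the cocycle of $(\Lambda'',\Omega')$ is \emph{literally equal} to $\Xi$ --- the $\vartheta$-terms cancel on the nose, not merely up to coboundary, because the comparison is a conjugation and conjugation acts trivially on $Z_\A$-valued sections --- and the entire coboundary then arises from your first case applied to the two cofactors $\Omega'$ and $\Omega''$ over the \emph{same} lift $\Lambda''$, whose ratio lies in $Z_\A$ by Proposition \ref{P SAut}(3). With this observation your case (ii) reduces to your case (i), and the proposal is complete and matches the paper's proof.
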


We will prove this theorem in the appendix, \S \ref{S appendix-Xi}.

\begin{theorem}\label{T obs}
There is a topological product Lie groupoid fibration over $\K$ by $\A$ with band $\bar \Lambda:K^1\rto \overline{\sf{SAut}(\A)}$ if and only if $[\Xi]=0$ in $H^3_{\bar \Lambda}(\K,Z_\A)$.
\end{theorem}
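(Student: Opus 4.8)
The plan is to prove Theorem \ref{T obs} as a direct corollary of the preceding results, treating the two implications separately and letting the cohomological machinery of Theorem \ref{T Xi-cocycle} do the heavy lifting.

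\emph{The ``if'' direction.} Suppose $[\Xi]=0$ in $H^3_{\bar\Lambda}(\K,Z_\A)$. This means the $3$-cocycle $\Xi$ is a coboundary, so there exists a $2$-cochain $\Theta\in C^2_{\bar\Lambda}(\K,Z_\A)$ with $\delta\Theta=\Xi$. The idea is to use $\Theta$ to \emph{correct} the cofactor $\Omega$ so that the generalized cocycle condition \eqref{E generalized-cocycle-1} holds exactly. Concretely, I would set $\Omega'(\xi,\eta,a):=\Theta(\xi,\eta)(a)\cdot\Omega(\xi,\eta,a)$ (reading $\Theta(\xi,\eta)$ as a section of $Z_\A$, hence a central natural transformation of $\sf id_\A$, and multiplying pointwise using that $Z_\A$ values are central). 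Since the center values commute with everything, the new pair $(\Lambda,\Omega')$ still satisfies \eqref{E Omega(1,alpha)=Omega(alpha,1)=1} (after normalizing $\Theta$ on degenerate simplices) and still lifts $\bar\Lambda$, while its obstruction cochain becomes $\Xi':=\Xi\cdot(\delta\Theta)^{-1}=1$. The key computation is that replacing $\Omega$ by $\Omega'$ changes $\Xi$ precisely by $\delta\Theta$; this is exactly the content of the ``$[\Xi]$ depends only on $\bar\Lambda$'' half of Theorem \ref{T Xi-cocycle}, where the formula for how $\Xi$ transforms under a change of $\Omega$ is established. Once $\Xi'\equiv 1$, Lemma \ref{L Xi=1-exist-extension} tells us that $(\Lambda,\Omega')$ satisfies the generalized cocycle condition, and then Theorem \ref{T S-Omega-def-G-SOmega} constructs the desired topological product Lie groupoid extension $\A\rtimes_{\Lambda,\Omega'}\K$ with band $\bar\Lambda$.

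\emph{The ``only if'' direction.} Suppose a topological product Lie groupoid extension $\G\xrightarrow{\phi}\K$ of $\K$ by $\A$ with band $\bar\Lambda$ exists. By Corollary \ref{C G-cong-KSOA}, $\G$ is isomorphic to $\A\rtimes_{\Lambda,\Omega}\K$ for the pre-action map $\Lambda$ of \eqref{E pre-action-map} and the associated cofactor $\Omega$ of \eqref{E def-Omega}. Because $\G$ is an honest Lie groupoid, its multiplication is associative, and the derivation in Section \ref{S 3} shows that associativity forces $(\Lambda,\Omega)$ to satisfy the generalized cocycle condition \eqref{E generalized-cocycle-1} exactly. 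Unwinding the definition \eqref{E def-Xi} of $\Xi$, the generalized cocycle condition says precisely that $\Xi(\xi,\eta,\zeta)\equiv 1$ for all composable triples, i.e. $\Xi$ is the trivial cocycle, so $[\Xi]=0$ in $H^3_{\bar\Lambda}(\K,Z_\A)$. (One must observe that the particular lift $\Lambda$ and cofactor $\Omega$ arising from $\G$ fall within the class of data for which $[\Xi]$ is defined; this is guaranteed once $\Omega$ is smooth, which holds by the first Lemma of Section \ref{S 3}.)

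\emph{Main obstacle.} The routine steps are the two invocations of the construction and corollary; the genuine content sits entirely in the ``if'' direction and depends critically on Theorem \ref{T Xi-cocycle}. The delicate point is verifying that correcting $\Omega$ by a central $2$-cochain $\Theta$ alters the obstruction $\Xi$ by exactly the coboundary $\delta\Theta$, with the cohomological $\delta$ matching the combinatorics of \eqref{E def-Xi}. This requires that the twisting in the $\K$-action $\bar\Lambda\colon K^1\to\mathrm{Aut}(Z_\A)$ appearing in the definition of $\delta$ coincides with the $\Lambda_\xi^{-1}$-conjugation appearing in \eqref{E generalized-cocycle-1} and \eqref{E def-Xi} — in particular the term $\Lambda_\xi^{-1}[\Omega(\eta,\zeta,a_1)^{-1}]$ must produce exactly the twisted face map of the cochain complex $C^*_{\bar\Lambda}(\K,Z_\A)$. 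Since the precise identification of $\delta$ with these formulas is deferred to the appendix (\S \ref{S appendix-Xi}), the cleanest exposition is to state Theorem \ref{T obs} here and let its proof rest on Theorem \ref{T Xi-cocycle}, so that the bookkeeping of signs and face maps is carried out once, in the appendix, rather than repeated.
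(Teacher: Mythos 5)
Your proposal is correct and follows essentially the same route as the paper's own proof: for the ``if'' direction you correct $\Omega$ by a $2$-cochain cobounding $\Xi$ so that the new pair $(\Lambda,\Omega')$ satisfies the generalized cocycle condition, then invoke Theorem \ref{T S-Omega-def-G-SOmega}, exactly as the paper does, and for the ``only if'' direction you observe that an existing extension yields a generalized cocycle with $\Xi\equiv 1$, so $[\Xi]=0$ by the independence statement in Theorem \ref{T Xi-cocycle}. Your extra care — normalizing $\Theta$ on degenerate simplices so that \eqref{E Omega(1,alpha)=Omega(alpha,1)=1} survives, and the explicit appeal to Corollary \ref{C G-cong-KSOA} — only spells out steps the paper leaves implicit.
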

\begin{proof}
The only if part follows from Lemma \ref{L Xi=1-exist-extension}. Now consider the if part. Suppose $[\Xi]=0$, then there is a $2$-cochian $c\in C^2_{\bar \Lambda}(\K,Z_\A)$ such that
\begin{align*}
\Xi(\xi_1,\xi_2,\xi_3)=dc(\xi_1,\xi_2,\xi_3)
=\Lambda_{\xi_1}\inv( c(\xi_2,\xi_3))\cdot c(\xi_1\xi_2,\xi_3)\inv\cdot c(\xi_1,\xi_2\xi_3)\cdot c(\xi_1,\xi_2)\inv.
\end{align*}
Set
\[
\Omega'(\xi_1,\xi_2,a):=c(\xi_1,\xi_2)(a)\cdot \Omega(\xi_1,\xi_2,a).
\]
Then $(\Lambda,\Omega')$ is a generalized cocycle. So via Theorem \ref{T S-Omega-def-G-SOmega} we get a topological trivial Lie groupoid fibration $\K\ltimes_{\Lambda,\Omega'}\A$ with band $\bar \Lambda$.
\end{proof}

We call $[\Xi]\in H^3_{\bar \Lambda}(\K,Z_\A)$ the {\bf obstruction class} of topological product Lie groupoid fibrations over $\K$ by $\A$ with band $\bar \Lambda$.

\begin{example}\label{Example 3}
As in Example \ref{Example 1} let $K$ and $A$ be two finite groups. Suppose $K$ acts smoothly on a smooth manifold $X$ from the right, and $A$ acts trivially on $X$. Consider the two Lie groupoids $[\bullet/K]$ and $A\ltimes X$. Let $\omega:K\rto \mathrm{Aut}(A)$ be a lifting of the homomorphism $\bar\omega:K\rto\mathrm{Out}(A)$. Then we get a homomorphism
$
\bar\Lambda: K\rto \overline{{\sf SAut}(A\ltimes X)}
$
by setting $\bar\Lambda(k)$ to be the image of the following $\Lambda_k\in\mathrm{SAut}^0(A\ltimes X)$:
\[
\Lambda_k:A\ltimes X\rto A\ltimes X,\qq \Lambda^0_k(x)=x\cdot k,\qq \Lambda^1_k(a,x)=(\omega(k)(a),\Lambda^0_k(x)).
\]
Obviously, $\Lambda_k$ forms a smooth lifting of $\bar\Lambda$. So we could study the existence of topological product fibration over $[\bullet/K]$ with fiber $A\ltimes X$. Since $\Lambda_k^0$ is the action map of the $K$-action on $X$, the corresponding cofactor $\Omega$ is determined by the default of $\omega$ being a homomorphism. As in Example \ref{Example 1} we denote it by $f:K\times K\rto A$. So $\mathrm{Inn}(f(k_1,k_2))=\omega(k_1k_2)\inv\circ\omega(k_2)\circ \omega(k_1)$. The obstruction class associated to $\bar\Lambda$ is a map $\Xi:K^3\times X\rto Z_{A\ltimes X}$. For simplicity we assume that $X$ is connected. So $Z_{A\ltimes X}=Z(A)$, the center of $A$; and $\Xi$ is constant on $X$, so it is a map $K^3\rto Z(A)$. This is the cocycle $c_{\bar\omega}$ determined by $(\omega,f)$. $\omega$ also determine a $K$-action on $Z(A)$, and $H^*_{\bar\Lambda}([\bullet/K],Z(A))=H^*_{\bar\omega}(K,Z(A))$. Therefore when $[c_{\bar\omega}]=0\in H^3_{\bar\omega}(K,Z(A))$, we have $[\Xi]=0\in H^3_{\bar\Lambda}([\bullet/K],Z_{A\ltimes X})$, and hence a topological product fibration over $[\bullet/K]$ with fiber $A\ltimes X$. This topological product fibration is $(K\ltimes_{(\omega,f)} A)\ltimes X\rto [\bullet/K]$ with $K\ltimes_{(\omega,f)} A$ being the group extension of $K$ by $A$ determined by $(\omega,f)$.
\end{example}

\section{Classification of locally topological product Lie groupoid fibrations}\label{S 5}

Now suppose we are given a morphism
$
\bar \Lambda:K^1\rto \overline{\sf{SAut}(\A)}.
$
In this section we classify all locally topological product Lie groupoid fibrations over $\K$ by $\A$ with band $\bar \Lambda$ when the obstruction class $[\Xi]=0\in H^3_{\bar \Lambda}(\K,Z_\A)$.

\subsection{Topological product Lie groupoid fibrations}
We first classify topological product Lie groupoid fibrations over $\K$ by $\A$ with band $\bar \Lambda$. By Corollary \ref{C G-cong-KSOA}, we only need to study topological product Lie groupoid fibrations constructed out of generalized cocycles.

Since $[\Xi]=0$, by Theorem \ref{T obs} every smooth lifting $\Lambda$ of $\bar \Lambda$ and the corresponding smooth $\Omega$ determines a topological product Lie groupoid fibration $\A\rtimes_{\Lambda,\Omega}\K$. However, there would exist different liftings $(\Lambda,\Omega)$ that yield isomorphic topological product Lie groupoid fibrations.

First note that, every natural transformation $\rho:\Lambda_1\Rto \Lambda_2$ in $\mathrm{SAut}^1(\A)$ has an inverse $\rho^{\circledast,-1}:\Lambda_1\inv\Rto \Lambda_2\inv$ under the horizontal composition, that is they satisfy the property that $\rho^{\circledast,-1}\circledast\rho= \rho\circledast\rho^{\circledast,-1}=1_{\sf id_A}:\sf id_A\Rto \sf id_A:A\rto A$ is the identity natural transformation. This inverse $\rho^{\circledast,-1}$ is given by
\begin{align}\label{E def-inverse-circledast}
\rho^{\circledast,-1}(a)=\left(\Lambda_1\inv \circ \rho\circ \Lambda_2\inv(a)\right)\inv,  \qq \forall a\in A^0.
\end{align}
A natural transformation $\rho$ also has an inverse $\rho^{\odot,-1}:\Lambda_2\Rto \Lambda_1$ with respect to the vertical composition. That is they satisfy the property that $\rho\odot\rho^{\odot,-1}=1_{\Lambda_1}:\Lambda_1\Rto \Lambda_1$ and $\rho^{\odot,-1}\odot\rho=1_{\Lambda_2}:\Lambda_2\Rto \Lambda_2$ are the identity natural transformations over $\Lambda_1$ and $\Lambda_2$ respectively. The inverse $\rho^{\odot,-1}$ is given by
\[
\rho^{\odot,-1}(a)=\rho(a)\inv, \qq \forall a\in A^0.
\]
\begin{defn}
Two generalized cocycles $(\Lambda,\Omega)$ and $(\Lambda',\Omega')$ are {\bf equivalent} if there is a smooth family of natural transformations $\rho_\xi:\Lambda_{\xi}\Rto \Lambda_{\xi}'$ parameterized by $K^1$ such that
\[
\Omega'(\xi,\eta)= \Omega(\xi,\eta)\odot [\rho_{\xi\eta}^{\circledast,-1}\circledast \rho_\eta\circledast \rho_\xi]
\]
and $\rho_{1_x}=u$, the unit map $u:A^0\rto A^1$.
\end{defn}

It is direct to see that
\begin{lemma}
Two topological product Lie groupoid fibrations $\A\rtimes_{\Lambda,\Omega}\K$ and $\A\rtimes_{\Lambda',\Omega'}\K$ are isomorphic if and only if $(\Lambda,\Omega)$ is equivalent to $(\Lambda',\Omega')$.
\end{lemma}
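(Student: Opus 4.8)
The plan is to establish the biconditional by constructing an explicit isomorphism of topological product Lie groupoid extensions from an equivalence of generalized cocycles, and conversely extracting such an equivalence from any isomorphism. Both extensions share the same object space $A^0 \times K^0$ and the same fiber $\A$ and base $\K$, so any candidate isomorphism $\Psi = (\Psi^0, \Psi^1) : \A\rtimes_{\Lambda,\Omega}\K \to \A\rtimes_{\Lambda',\Omega'}\K$ must restrict to the identity on $\ker\phi$ and cover the identity on $\K$, by Definition \ref{D equivalence-extension} and the commutative diagram in the isomorphism definition for fiber bundles. Thus $\Psi^0 = \mathrm{id}$, and $\Psi^1$ is forced to have the form $(\alpha,\xi) \mapsto (\Psi_\xi(\alpha), \xi)$ for some smooth family of maps $\Psi_\xi : A^1 \to A^1$ covering $\xi$; the content is to pin down exactly which families $\Psi_\xi$ yield a groupoid homomorphism.

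For the ``if'' direction, given the family $\rho(\xi) : \Lambda_\xi \Rto \Lambda_\xi'$ I would define $\Psi^1(\alpha,\xi) := \big(\Phi_\xi(\alpha), \xi\big)$ where $\Phi_\xi$ is built from $\rho(\xi)$ so as to intertwine the two multiplications \eqref{E def-multi-T-4-1}. The natural candidate, dictated by how $\rho$ conjugates $\Lambda_\xi$ into $\Lambda_\xi'$, is $\Phi_\xi(\alpha) = \rho(s(\alpha))^{-1}\cdot\alpha\cdot\rho(\text{appropriate target})$ threaded through the automorphisms, i.e. the arrow-level realization of the natural transformation $\rho(\xi)$ applied along $\alpha$. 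First I would verify $\Psi^1$ respects source and target maps \eqref{E def-s-in-K-ltimes-A}--\eqref{E def-t-in-K-ltimes-A}, which follows since $\rho(\xi)$ is a natural transformation and hence fits into diagram \eqref{E commut-diag-natural-trans}. The crucial step is checking multiplicativity: $\Psi^1\big((\alpha,\xi)\cdot(\beta,\eta)\big) = \Psi^1(\alpha,\xi)\cdot\Psi^1(\beta,\eta)$. Expanding both sides via \eqref{E def-multi-T-4-1} and collecting the $\Omega$-terms, the discrepancy between $\Omega$ and $\Omega'$ appears precisely as the factor $\rho(\xi\eta)^{\circledast,-1}\circledast\rho(\eta)\circledast\rho(\xi)$, so the equivalence relation $\Omega'(\xi,\eta) = \Omega(\xi,\eta)\odot[\rho(\xi\eta)^{\circledast,-1}\circledast\rho(\eta)\circledast\rho(\xi)]$ is exactly what makes the two sides agree. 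The condition $\rho(1_x) = u$ ensures units are preserved, using \eqref{E Omega(1,alpha)=Omega(alpha,1)=1}.

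For the ``only if'' direction I would reverse this: given an isomorphism $\Psi$ of topological product extensions, its restriction to each fiber over an arrow $\xi$ determines a natural transformation $\rho(\xi) : \Lambda_\xi \Rto \Lambda_\xi'$ (since $\Psi$ must carry the pre-action map $\Lambda$ to $\Lambda'$ up to the band, and both have band $\bar\Lambda$), and evaluating the multiplicativity of $\Psi^1$ on pairs $(\alpha,\xi),(\beta,\eta)$ forces the cocycle relation between $\Omega$ and $\Omega'$. Smoothness of $\rho$ follows from smoothness of $\Psi^1$, and $\rho(1_x) = u$ follows from unit preservation. The main obstacle I anticipate is purely bookkeeping rather than conceptual: the horizontal and vertical composition operations $\circledast$ and $\odot$ interact with the automorphisms $\Lambda_\xi, \Lambda_\xi'$ in a way that makes the multiplicativity computation lengthy, and one must carefully track the arguments $s(\alpha)$, $t(\alpha)$ and their images under the various $\Lambda$'s (exactly as in the associativity verification in the proof of Theorem \ref{T S-Omega-def-G-SOmega}). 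In particular, getting the placement of $\Lambda_{\xi\eta}^{-1}\Lambda_\eta$ correct in front of the $\rho$-terms, and confirming that the definitions \eqref{E def-inverse-circledast} of $\rho^{\circledast,-1}$ produce precisely the inverse factor needed, is where sign- and order-errors are most likely; I would organize this by first rewriting the defining equivalence relation evaluated at a point $a\in A^0$ using \eqref{E def-odot} and \eqref{E def-circledast}, and then matching it termwise against the expanded product.
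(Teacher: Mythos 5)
Your plan is correct, and it is worth noting that the paper itself gives \emph{no} proof of this lemma --- it is prefaced only by ``It is direct to see that'' --- so your outline supplies precisely the verification the authors leave to the reader, and I checked that it goes through. Two refinements to your sketch. First, the candidate formula can be pinned down: since the source condition $s_\G=s_{\G'}$ on objects forces $s(\Psi_\xi(\alpha))=s(\alpha)$, there is in fact no left $\rho$-factor; the map is
\[
\Psi^1(\alpha,\xi)=\left(\alpha\cdot\left[(\Lambda'_\xi)\inv\bigl(\rho(\xi)(t(\alpha))\bigr)\right]\inv,\ \xi\right),
\]
which by naturality of $\rho(\xi)$ (diagram \eqref{E commut-diag-natural-trans}) equals the two-sided ``conjugated'' form $\left[(\Lambda'_\xi)\inv\bigl(\rho(\xi)(s(\alpha))\bigr)\right]\inv\cdot(\Lambda'_\xi)\inv\Lambda_\xi(\alpha)$ you gesture at; one checks directly that this intertwines the source and target maps \eqref{E def-s-in-K-ltimes-A}--\eqref{E def-t-in-K-ltimes-A}. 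Multiplicativity for \eqref{E def-multi-T-4-1} evaluated on the pair $(1_a,\xi),(1_{\Lambda_\xi(a)},\eta)$ yields exactly the relation $\Omega'(\xi,\eta)=\Omega(\xi,\eta)\odot[\rho(\xi\eta)^{\circledast,-1}\circledast\rho(\eta)\circledast\rho(\xi)]$ at $a$ (after one application of the naturality square, i.e.\ the exchange law, to reorder the two $\rho$-factors), and multiplicativity on general pairs then follows via the decomposition $(\alpha,\xi)=(\alpha,1_{s(\xi)})\cdot(1_{t(\alpha)},\xi)$, which holds by \eqref{E Omega(1,alpha)=Omega(alpha,1)=1}; the same decomposition drives your converse, where $\rho(\xi)(a):=\bigl[\Lambda'_\xi(\Phi_\xi(1_a))\bigr]\inv$ recovers the natural transformation from $\Psi^1(1_a,\xi)=(\Phi_\xi(1_a),\xi)$, with naturality again extracted from multiplicativity. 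Second, a small but substantive correction of emphasis: the condition $\rho(1_x)=u$ does more than ``ensure units are preserved'' --- since $\Psi^1(\alpha,1_x)=(\alpha\cdot[(\Lambda'_{1_x})\inv(\rho(1_x)(t(\alpha)))]\inv,1_x)$, it is exactly what makes $\Psi$ restrict to the identity on $\ker\phi$, and this kernel condition is part of the definition of isomorphism of extensions (Definition \ref{D equivalence-extension} and the preceding diagram); correspondingly, in the only-if direction it is the kernel condition, not unit preservation, that forces $\rho(1_x)=u$ for the extracted family.
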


Denote the set of isomorphic classes of topological product Lie groupoid fibrations over $\K$ by $\A$ with band $\bar \Lambda$ by
$
\mathrm{Iso}(\K,\A,\bar \Lambda).
$
Denote the isomorphic class of a topological product Lie groupoid fibration $\A\rtimes_{\Lambda,\Omega}\K$ by $[\A\rtimes_{\Lambda,\Omega}\K]$.

\begin{theorem}\label{T classification-iso}
There is a 1-to-1 correspondence between $\mathrm{Iso}(\K,\A,\bar \Lambda)$ and $H^2_{\bar \Lambda}(\K,Z_\A)$.
\end{theorem}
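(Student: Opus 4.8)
The plan is to construct the bijection explicitly and then verify it is well-defined and invertible. Since $[\Xi]=0$, Theorem \ref{T obs} guarantees at least one generalized cocycle $(\Lambda,\Omega)$ exists, giving a basepoint in $\mathrm{Iso}(\K,\A,\bar\Lambda)$. First I would fix one such reference cocycle $(\Lambda_0,\Omega_0)$ and show that any other topological product extension with band $\bar\Lambda$ is represented, up to isomorphism, by a cocycle of the form $(\Lambda_0,\Omega_0')$ where $\Omega_0'$ differs from $\Omega_0$ by a $2$-cochain in $Z_\A$. The key observation is that two liftings $\Lambda$ and $\Lambda'$ of the same band $\bar\Lambda$ differ by a family of natural transformations $\rho(\xi):\Lambda_\xi\Rto\Lambda_\xi'$, and using the equivalence relation in Definition (the one preceding the last Lemma) we may adjust $\Lambda'$ back to $\Lambda_0$ without changing the isomorphism class of the extension. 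This reduces the classification to understanding, for a \emph{fixed} lifting $\Lambda_0$, which cofactors $\Omega$ yield generalized cocycles and when two such are equivalent.

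Next I would set up the correspondence at the level of cochains. Given the reference cocycle $(\Lambda_0,\Omega_0)$, any other cofactor $\Omega$ with the same $\Lambda_0$ satisfying the generalized cocycle condition \eqref{E generalized-cocycle-1} can be written as $\Omega(\xi,\eta,a)=c(\xi,\eta)(a)\cdot\Omega_0(\xi,\eta,a)$ for some map $c:K^{[2]}\rto ZA^0$, exactly as in the proof of Theorem \ref{T obs}. Substituting into the generalized cocycle condition and using \eqref{E 3.3} together with the fact that the $\Xi$-discrepancy of both $\Omega$ and $\Omega_0$ computes the same class, I expect the generalized cocycle condition for $\Omega$ to reduce precisely to the statement that $c$ is a $2$-cocycle in $C^*_{\bar\Lambda}(\K,Z_\A)$, i.e.\ $dc=0$. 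Thus the assignment $[\A\rtimes_{\Lambda_0,\Omega}\K]\mapsto [c]$ sends isomorphism classes of extensions to elements of $H^2_{\bar\Lambda}(\K,Z_\A)$. This defines the map $\mathrm{Iso}(\K,\A,\bar\Lambda)\rto H^2_{\bar\Lambda}(\K,Z_\A)$.

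To see the map is well-defined and injective, I would unwind the equivalence relation of Definition (generalized cocycle equivalence). When $(\Lambda_0,\Omega)$ and $(\Lambda_0,\Omega')$ are equivalent via a family $\rho(\xi):\Lambda_{0,\xi}\Rto\Lambda_{0,\xi}$ (note both liftings equal $\Lambda_0$, so $\rho(\xi)$ is an \emph{automorphism} of $\Lambda_{0,\xi}$, hence a section of $Z_\A$ valued in isotropy centers by Proposition \ref{P SAut}(3)), the relation $\Omega'=\Omega\odot[\rho(\xi\eta)^{\circledast,-1}\circledast\rho(\eta)\circledast\rho(\xi)]$ translates into $c'(\xi,\eta)=c(\xi,\eta)\cdot db(\xi,\eta)$ where $b(\xi):=\rho(\xi)\in Z_\A$ is a $1$-cochain and $db$ is the coboundary computed with the $\bar\Lambda$-twisted differential. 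Hence equivalent cocycles give cohomologous $c$, and conversely any coboundary arises from such a $\rho$; this gives both well-definedness and injectivity on cohomology classes. Surjectivity follows because any $2$-cocycle $c$ produces, via $\Omega:=c\cdot\Omega_0$, a genuine generalized cocycle by the computation above, and Theorem \ref{T S-Omega-def-G-SOmega} then yields the extension $\A\rtimes_{\Lambda_0,\Omega}\K$ mapping to $[c]$.

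The main obstacle I anticipate is the reduction step in the first paragraph: showing that changing the lifting $\Lambda\rightsquigarrow\Lambda'$ (not merely the cofactor) does not enlarge the set of classes, so that fixing $\Lambda_0$ loses no generality. This requires carefully tracking how the cofactor $\Omega$ transforms under a change of lifting via the horizontal composition $\circledast$, and confirming that the induced change is absorbed by the generalized cocycle equivalence relation — in particular that the correction terms $\rho(\xi\eta)^{\circledast,-1}\circledast\rho(\eta)\circledast\rho(\xi)$ land in $Z_\A$ after the liftings are matched, which uses the exact sequence \eqref{E SES-2} and Proposition \ref{P SAut}(3). The bookkeeping between the two compositions $\odot$ and $\circledast$, and the sign/orientation conventions in the twisted differential $d$ on $C^*_{\bar\Lambda}(\K,Z_\A)$, is where I expect the calculation to be most delicate; everything else is a direct, if lengthy, unwinding of definitions.
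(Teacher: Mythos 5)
Your proposal is correct and follows essentially the same route as the paper's proof: fix a reference generalized cocycle $(\Lambda_0,\Omega_0)$, set up the correspondence $[c]\leftrightarrow[(\Lambda_0,c\,\Omega_0)]$, use a family of natural transformations $\rho(\xi)$ to absorb any change of lifting into the cocycle-equivalence relation (the paper's surjectivity step), and invoke Proposition \ref{P SAut}(3) to identify self-transformations of $\Lambda_0$ with $Z_\A$-valued cochains for well-definedness and injectivity. The only cosmetic difference is the direction of the map (you go from $\mathrm{Iso}(\K,\A,\bar\Lambda)$ to $H^2_{\bar\Lambda}(\K,Z_\A)$, the paper the reverse), and you should note that $c$ a priori valued in $ZA^0$ is promoted to $Z_\A$ precisely via the natural-transformation viewpoint $c(\xi,\eta)\colon{\sf id}_\A\Rightarrow{\sf id}_\A$, which you do supply in your third paragraph.
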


\begin{proof}
By the above analysis, the isomorphic classes of topological product Lie groupoid fibrations over $\K$ with fiber $\A$ and band $\bar \Lambda$ correspond to the equivalence classes of generalized cocycles that lift $\bar \Lambda$.

Let $\A\rtimes_{\Lambda_0,\Omega_0}\K$ be the topological product Lie groupoid fibration corresponds to a fixed generalized cocycle $(\Lambda_0,\Omega_0)$ with $\pi\circ \Lambda_0=\bar \Lambda$. The existence of $\A\rtimes_{\Lambda_0,\Omega_0}\K$ is guaranteed by the assumption. We next construct a map from $H^2_{\bar \Lambda}(\K,Z_\A)$ to the set of equivalence classes of generalized cocycles by setting
\begin{align}\label{E def-c-to-cOmega}
[c]\mapsto [(\Lambda_0,c\Omega_0)],
\end{align}
where $c:K^{[2]}\rto Z_\A$ is a cocycle in $C^2_{\bar \Lambda}(\K,Z_\A)$, representing $[c]$. First of all, $(\Lambda_0,c\Omega_0)$ is still a generalized cocycle. So there is a topological product Lie groupoid fibration over $\K$ with fiber $\A$ corresponds to $(\Lambda_0,c\Omega_0)$, hence its band is $\bar \Lambda$.

The assignment \eqref{E def-c-to-cOmega} gives rise to a well-defined map. In fact, if $c$ and $c'$ represent the same cohomology class, there is a 1-cochian $\rho$ such that $d\rho=c'\cdot c\inv$; then $\rho$ gives rise to an equivalence between $(\Lambda_0,c\Omega)$ and $(\Lambda_0,c'\Omega_0)$. We next show that this map \eqref{E def-c-to-cOmega} is bijective.

For the surjectivity, suppose $(\Lambda,\Omega)$ is a generalized cocycle with $\Lambda$ being a smooth lifting of $\bar \Lambda$. Then since both $\Lambda$ and $\Lambda_0$ are smooth liftings of $\bar \Lambda$, there is a smooth family of natural transformations
$
\rho_\xi:\Lambda_{\xi}\Rto \Lambda_{0,\xi}
$
for all $\xi\in K^1$, and $\rho_{1_x}=u$ for every $x\in K^0$. Therefore for every pair of composable arrows $\xi_1,\xi_2$ in $K^1$, we have
\[
\rho_{\xi_1\xi_2}^{\circledast,-1}\circledast\rho_{\xi_2} \circledast\rho_{\xi_1}:\Lambda_{\xi_{12}}\inv \Lambda_{\xi_2}\Lambda_{\xi_1}\Rto \Lambda_{0,\xi_{12}}\inv \Lambda_{0,\xi_2}\Lambda_{0,\xi_1}.
\]
On the other hand, for $\xi_1,\xi_2$ we have
\[
\Omega(\xi_1,\xi_2):{\sf id}_\A\Rto \Lambda_{\xi_{12}}\inv \Lambda_{\xi_2}\Lambda_{\xi_1}, \qq
\Omega_0(\xi_1,\xi_2):{\sf id}_\A\Rto \Lambda_{0,\xi_{12}}\inv \Lambda_{0,\xi_2}\Lambda_{0,\xi_1}.
\]
Therefore we obtain a natural transformation
\[
\Omega'(\xi_1,\xi_2):= \Omega(\xi_1,\xi_2)\odot \left(\rho_{\xi_1\xi_2}^{\circledast,-1}\circledast\rho_{\xi_2} \circledast\rho_{\xi_1}\right): {\sf id_A}\Rto \Lambda_{0,\xi_{12}}\inv \Lambda_{0,\xi_2}\Lambda_{0,\xi_1}.
\]
This means that $(\Lambda,\Omega)$ is equivalent to $(\Lambda_0,\Omega')$. The two natural transformations $\Omega'(\xi_1,\xi_2)$ and $\Omega_0(\xi_1,\xi_2)$ induce a
\[
c(\xi_1,\xi_2):=\Omega'(\xi_1,\xi_2)\odot \Omega_0(\xi_1,\xi_2)^{\odot,-1}: {\sf id}_\A\Rto {\sf id}_\A.
\]
By item (3) of Proposition \ref{P SAut}, $c(\xi_1,\xi_2)\in Z_\A$. Therefore $c\in C_{\bar \Lambda}^2(\K,Z_\A)$. The fact that both $(\Lambda_0,\Omega_0)$ and $(\Lambda_0,\Omega')$ are generalized cocycle implies that $c$ is a cocycle. Moreover $\Omega'=c\odot\Omega_0=c\cdot\Omega_0$. Therefore the map \eqref{E def-c-to-cOmega} is surjective.

Finally, we consider the injectivity. If $(\Lambda_0,c\Omega_0)$ is equivalent to $(\Lambda_0,c'\Omega_0)$. Then there is a smooth family of natural transformations $\rho_{\xi}:\Lambda_{0,\xi}\Rto \Lambda_{0,\xi}$ such that for every composable pair of arrows $\xi_1,\xi_2\in K^1$,
\[
(c\Omega_0)(\xi_1,\xi_2)=(c'\Omega_0)(\xi_1,\xi_2) \odot (\rho_{\xi_1\xi_2}^{\circledast,-1}\circledast\rho_{\xi_2} \circledast\rho_{\xi_1}).
\]
This gives rise to a $\rho\in C^1_{\bar \Lambda}(\K,Z_\A)$, since the isotropy group of $\Lambda_0$ in $\sf SAut(A)$ is also $Z_\A$ by Proposition \ref{P SAut}. Moreover
$
c=c'\cdot d\rho.
$
Therefore the map \eqref{E def-c-to-cOmega} is injective. This finishes the proof.
\end{proof}

\subsection{Classifying locally topological product Lie groupoid fibrations}

The morphism $\bar\Lambda:K^1\rto\overline{\sf SAut(A)}$ induces a morphism
\begin{align}\label{E induced-Lambda-U}
\bar \Lambda_{\mc U}=\bar \Lambda\circ q^1_{\mc U}:\K[\mc U]^1\rto \overline{\sf{SAut}(\A)}.
\end{align}
for every open cover $\mc U$ of $K^0$ via the refinement morphism $\sq:\K[\mc U]\rto\K$. The obstruction class associated to $\bar \Lambda_{\mc U}$ is the image of the obstruction class $[\Xi]$ associated to $\bar \Lambda$ under the homomorphism
\[
\msf q^*_{\mc U}:H^*_{\bar \Lambda}(\K,Z_\A)\rto H^3_{\bar \Lambda_{\mc U}}(\K[\mc U],Z_\A),
\]
hence also vanishes by the assumption $[\Xi]=0$. Therefore, for every open cover $\mc U$ of $K^0$, there exists topological product Lie groupoid fibration over $\K[\mc U]$ with fiber $\A$ and band $\bar\Lambda_{\mc U}$.

In this subsection, w.r.t. equivalence of locally topological product Lie groupoid fibrations we classify all locally topological product Lie groupoid fibrations with band induced by $\bar\Lambda$ via \eqref{E induced-Lambda-U}.
By the definition of locally topological product Lie groupoid fibrations, every locally topological product Lie groupoid fibration $\G\xrightarrow{\phi}\K$ has a refinement $\G[\phi^*\mc U]\rto\K[\mc U]$ that is isomorphic to a topological product Lie groupoid fibration; and by the Definition \ref{D equivalence-extension} of equivalence of locally topological product Lie groupoid fibrations, $\G\xrightarrow{\phi}\K$ is equivalent to $\G[\phi^*\mc U]\rto\K[\mc U]$. However, there would exist topological product Lie groupoid fibrations over refinements of $\K$ with fiber $\A$ that are not refinements of locally topological product Lie groupoid fibrations $\G\xrightarrow{\phi}\K$ over $\K$ with fiber $\A$. So we consider the following set
\[
\widetilde{\mathrm{LTPLGF}}(\K,\A,\bar \Lambda):=\left\{
\A\rtimes_{\Lambda_{\mc U},\Omega_{\mc U}}\K[\mc U] \left|\substack{\mc U \text{ is an open cover of } K^0,\\ \Lambda_{\mc U} \text{ is a smooth lifting of }\bar \Lambda_{\mc U},\text{ and}\\
(\Lambda_{\mc U},\Omega_{\mc U}) \text{ is a generalized cocycle}}\right. \right\}.
\]
Denote by
$
\mathrm{LTPLGF}(\K,\A,\bar \Lambda)$ the quotient
of this set
by the equivalence relation between locally topological product Lie groupoid fibrations given in Definition \ref{D equivalence-extension}.

First of all we have the following two simple lemmas.
\begin{lemma}\label{L canonical-iso-on-pullbacks}
Every generalized cocycle $(\Lambda,\Omega)$ over $\K$ pulls back to a generalized cocycle $(\Lambda_{\mc U},\Omega_{\mc U}):=\sq_{\mc U}^*(\Lambda,\Omega)$ over $\K[\mc U]$ with
\[
\Lambda_{\mc U}:=\Lambda\circ q^1_{\mc U}:\K[\mc U]^1\rto\mathrm{SAut}(\A)
\]
and
\[
\Omega_{\mc U}:=\Omega\circ (q^1_{\mc U},q^1_{\mc U},id_{A^0}):\K[\mc U]^1\times_{t,s}\K[\mc U]^1\times A^0\rto A^1.
\]
Moreover, there is a canonical isomorphism
\[
\A\rtimes_{\Lambda_{\mc U},\Omega_{\mc U}}\K[\mc U]\cong \sq^*_{\mc U}(\A\rtimes_{\Lambda,\Omega}\K).
\]
\end{lemma}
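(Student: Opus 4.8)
The plan is to prove the two assertions in turn: that $(\Lambda_{\mc U},\Omega_{\mc U})$ is again a generalized cocycle, and that the two resulting extensions are canonically isomorphic.

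For the first assertion I would use that $q^1_{\mc U}\colon\K[\mc U]^1\rto K^1$ is a functor, so it carries units to units and composable pairs to composable pairs with $q^1_{\mc U}(\tilde\xi\tilde\eta)=q^1_{\mc U}(\tilde\xi)\,q^1_{\mc U}(\tilde\eta)$. Writing $\xi=q^1_{\mc U}(\tilde\xi)$, etc., the definitions give $\Lambda_{\mc U,\tilde\xi}=\Lambda_\xi$ and $\Omega_{\mc U}(\tilde\xi,\tilde\eta,a)=\Omega(\xi,\eta,a)$; in particular $\Lambda_{\mc U,1_x}=\Lambda_{1_{q^0_{\mc U}(x)}}={\sf id}_\A$ and $\pi\circ\Lambda_{\mc U}=\bar\Lambda_{\mc U}$ from \eqref{E induced-Lambda-U}. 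Since the intermediate objects $a_4,a_5$ occurring in \eqref{E generalized-cocycle-1} are built only from the $\Lambda$'s and the base point, they agree for the two cocycles, so the generalized cocycle condition for $(\Lambda_{\mc U},\Omega_{\mc U})$ on a triple of composable arrows of $\K[\mc U]$ is literally the condition \eqref{E generalized-cocycle-1} for $(\Lambda,\Omega)$ on their images, which holds.

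For the isomorphism, write $\G=\A\rtimes_{\Lambda,\Omega}\K$ and let $\phi\colon\G\rto\K$ be the extension projection, so that $\sq^*_{\mc U}(\G)=\G[\phi^*\mc U]$. Since $\phi^*\mc U=\{A^0\times U_a\}$, this pullback groupoid has object space $\bigsqcup_a(A^0\times U_a)$ and arrows the triples $\big((a_1,x),(\alpha,\xi),(a_2,y)\big)$ with $(\alpha,\xi)\in G^1$ an arrow from $(a_1,q^0_{\mc U}(x))$ to $(a_2,q^0_{\mc U}(y))$; unwinding \eqref{E def-s-in-K-ltimes-A}--\eqref{E def-t-in-K-ltimes-A} this forces $s(\alpha)=a_1$, $a_2=\Lambda_\xi(t(\alpha))$ and $\xi\colon q^0_{\mc U}(x)\rto q^0_{\mc U}(y)$. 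The groupoid $\A\rtimes_{\Lambda_{\mc U},\Omega_{\mc U}}\K[\mc U]$ has the same object space and arrows $(\alpha,(x,\xi,y))$. I would define $F$ to be the identity on objects and
\[
F^1\big(\alpha,(x,\xi,y)\big):=\big((s(\alpha),x),\,(\alpha,\xi),\,(\Lambda_\xi(t(\alpha)),y)\big),
\]
with inverse given by forgetting the redundant endpoint data, i.e. $\big((a_1,x),(\alpha,\xi),(a_2,y)\big)\mapsto(\alpha,(x,\xi,y))$.

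It then remains to check that $F$ is a Lie groupoid isomorphism compatible with the extension structures. Source, target and unit compatibility are immediate from the formulas, and compatibility with the projections to $\K[\mc U]$ together with the identification on kernels is visible directly from $F^1$. The one real computation --- and the step I expect to be the main obstacle --- is multiplicativity. For composable $(\alpha,\tilde\xi),(\beta,\tilde\eta)$ the composability condition reads $\Lambda_\xi(t(\alpha))=s(\beta)$, which is exactly the composability of their images in $\G[\phi^*\mc U]$; I would then show that both composites carry the same $G^1$-part, namely $(\alpha,\xi)\cdot_\G(\beta,\eta)$ computed by \eqref{E def-multi-T-4-1}, because $\Lambda_{\mc U}$ and $\Omega_{\mc U}$ are the pullbacks and hence the $A^1$-factor of the product in $\A\rtimes_{\Lambda_{\mc U},\Omega_{\mc U}}\K[\mc U]$ equals the $A^1$-factor $\gamma$ of $(\alpha,\xi)\cdot_\G(\beta,\eta)$. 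The only remaining point is that $F$ assigns to this product the same endpoints as the product of the images, which reduces to the identity $\Lambda_{\xi\eta}(t(\gamma))=\Lambda_\eta(t(\beta))$; this follows from $t(\gamma)=\Lambda_{\xi\eta}\inv\Lambda_\eta(t(\beta))$ and the functoriality of the $\Lambda$'s. Once the arrow correspondence is set up the rest is bookkeeping, and the only place requiring care is matching the two multiplication rules.
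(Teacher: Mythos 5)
Your proposal is correct: the paper states this lemma without proof (treating it as a direct consequence of the definitions), and your argument is exactly the intended direct verification --- functoriality of $\sq_{\mc U}$ makes the pulled-back pair satisfy the cocycle condition \eqref{E generalized-cocycle-1} verbatim, and the arrow correspondence $(\alpha,(x,\xi,y))\mapsto\big((s(\alpha),x),(\alpha,\xi),(\Lambda_\xi(t(\alpha)),y)\big)$ is the canonical isomorphism. Your identification of the one nontrivial check, namely that the endpoint of the product matches via $t(\gamma)=\Lambda_{\xi\eta}\inv\Lambda_\eta(t(\beta))$ so that $\Lambda_{\xi\eta}(t(\gamma))=\Lambda_\eta(t(\beta))$, is accurate and completes the multiplicativity of the functor.
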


\begin{lemma}\label{L canonical-iso-on-refinements}
$\A\rtimes_{\Lambda,\Omega}\K[\mc U]$ is equivalent to $\A\ltimes_{\Lambda',\Omega'}\K[\mc U']$ if and only if there is a third open cover $\mc W$ of $K^0$ refining both $\mc U$ and $\mc U'$, such that the pullback generalized cocycles $(\Lambda_{\mc W},\Omega_{\mc W})$ and $(\Lambda_{\mc W}',\Omega_{\mc W}')$ are equivalent.
\end{lemma}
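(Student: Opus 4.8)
The plan is to reduce the statement, in both directions, to two facts already in hand: that open-cover refinement of a generalized cocycle is compatible with refinement of the associated extension (Lemma \ref{L canonical-iso-on-pullbacks}), and that isomorphism of topological product extensions over a \emph{fixed} base is detected by equivalence of the underlying generalized cocycles (the isomorphism criterion proved just above Theorem \ref{T classification-iso}). The only extra ingredient is the functoriality of the refinement construction: if $\mc V$ is an open cover of $\K[\mc U]^0=\bigsqcup_{a}U_a$, then, after shrinking each member of $\mc V$ so that it lies in a single $U_a$ and hence defines an open subset of $K^0$, the cover $\mc V$ determines an open cover $\mc W$ of $K^0$ refining $\mc U$, together with a canonical identification $(\K[\mc U])[\mc V]\cong\K[\mc W]$ under which the iterated refine maps collapse to $\sq_{\mc W}:\K[\mc W]\rto\K$. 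This ``refinements compose'' statement is what lets me pass freely between refinements of $\K[\mc U]$ and genuine open covers of $K^0$.

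For the \emph{if} direction, suppose $\mc W$ refines both $\mc U$ and $\mc U'$ and $(\Lambda_{\mc W},\Omega_{\mc W})$ is equivalent to $(\Lambda'_{\mc W},\Omega'_{\mc W})$. Applying Lemma \ref{L canonical-iso-on-pullbacks} to the refine maps $\iota_{\mc W,\mc U}:\K[\mc W]\rto\K[\mc U]$ and $\iota_{\mc W,\mc U'}:\K[\mc W]\rto\K[\mc U']$, the pullbacks of the two extensions to $\K[\mc W]$ are canonically isomorphic to $\A\rtimes_{\Lambda_{\mc W},\Omega_{\mc W}}\K[\mc W]$ and $\A\rtimes_{\Lambda'_{\mc W},\Omega'_{\mc W}}\K[\mc W]$. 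These share the common base $\K[\mc W]$, so the isomorphism criterion converts the assumed equivalence of cocycles into an isomorphism of these two topological product extensions over $\K[\mc W]$. Unwinding Definition \ref{D equivalence-extension}, producing such a common-base isomorphism of refinements is precisely what it means for $\A\rtimes_{\Lambda,\Omega}\K[\mc U]$ and $\A\rtimes_{\Lambda',\Omega'}\K[\mc U']$ to be equivalent.

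For the \emph{only if} direction, assume the two extensions are equivalent. Definition \ref{D equivalence-extension} supplies refinements $\mc V_1$ of $\K[\mc U]$ and $\mc V_2$ of $\K[\mc U']$ together with an isomorphism of topological product extensions between $(\A\rtimes_{\Lambda,\Omega}\K[\mc U])[\mc V_1]$ and $(\A\rtimes_{\Lambda',\Omega'}\K[\mc U'])[\mc V_2]$. Since such an isomorphism covers the identity on the base, the two refined bases must agree; call this common groupoid $\K'$. The ``refinements compose'' ingredient realizes $\K'=\K[\mc W]$ for a single open cover $\mc W$ of $K^0$, and the two induced refine maps $\K[\mc W]\rto\K[\mc U]$ and $\K[\mc W]\rto\K[\mc U']$ exhibit $\mc W$ as a common refinement of $\mc U$ and $\mc U'$. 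Lemma \ref{L canonical-iso-on-pullbacks} then identifies the two sides of the given isomorphism with $\A\rtimes_{\Lambda_{\mc W},\Omega_{\mc W}}\K[\mc W]$ and $\A\rtimes_{\Lambda'_{\mc W},\Omega'_{\mc W}}\K[\mc W]$, yielding an isomorphism of these over the fixed base $\K[\mc W]$; by the isomorphism criterion this forces $(\Lambda_{\mc W},\Omega_{\mc W})$ and $(\Lambda'_{\mc W},\Omega'_{\mc W})$ to be equivalent.

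The main obstacle is the bookkeeping in the \emph{only if} direction: making precise that an arbitrary open-cover refinement of $\K[\mc U]$ straightens to a refinement $\K[\mc W]$ coming from a genuine open cover of $K^0$ refining $\mc U$, and that a \emph{single} such $\mc W$ simultaneously refines $\mc U'$ through the second refine map. In particular one must verify that the canonical identifications of Lemma \ref{L canonical-iso-on-pullbacks} intertwine the iterated-refinement isomorphism $(\K[\mc U])[\mc V_1]\cong\K[\mc W]$ on both objects and arrows, so that the transported structure maps are genuinely those of $\A\rtimes_{\Lambda_{\mc W},\Omega_{\mc W}}\K[\mc W]$. Once this naturality is established, the remainder is a direct application of the two cited results.
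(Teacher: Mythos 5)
Your proof is correct: the paper states this lemma without proof (it is one of the ``two simple lemmas''), and your argument --- combining Lemma \ref{L canonical-iso-on-pullbacks} (together with its refinement-map analogue $\iota^*_{\mc W,\mc U}(\A\rtimes_{\Lambda_{\mc U},\Omega_{\mc U}}\K[\mc U])\cong\A\rtimes_{\iota^*_{\mc W,\mc U}\Lambda_{\mc U},\iota^*_{\mc W,\mc U}\Omega_{\mc U}}\K[\mc W]$ stated just after it) with the fixed-base isomorphism criterion preceding Theorem \ref{T classification-iso}, plus the straightening of iterated refinements $(\K[\mc U])[\mc V]\cong\K[\mc W]$ --- is exactly the routine verification the authors leave implicit. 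Your explicit attention to the only genuine bookkeeping point, namely splitting members of a cover of $\bigsqcup_a U_a$ across the components of the disjoint union so that they define honest open subsets of $K^0$ refining $\mc U$ (and checking that the canonical identifications intertwine the transported structure maps), is handled correctly.
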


For a fixed $\mc U$ we have the set of isomorphic classes of topological product Lie groupoid fibrations $\mathrm{Iso}(\K[\mc U],\A,\bar \Lambda_{\mc U})$. If $\mc W$ is another open cover that refines $\mc U$ via $\iota_{\mc W,\mc U}:\mc W\rto\mc U$, then we have the commutative diagram of morphisms
\begin{align}\label{E 5.1}
\begin{split}
\xymatrix{
\K[\mc W]\ar[rr]^-{\iota_{\mc W,\mc U}} \ar[drr]_-{\sq_{\mc W}} &&
\K[\mc U]\ar[d]^-{\sq_{\mc U}} \\
&&\K  }
\end{split}
\end{align}
and
\begin{align}\label{E 5.2}
\bar \Lambda_{\mc U}\circ  \iota_{\mc W,\mc U}=\bar \Lambda_{\mc W}.
\end{align}
Note that, the commutative diagram \eqref{E 5.1} and the equality \eqref{E 5.2} do not depend on the choice of the refine map $\iota_{\mc W,\mc U}:\mc W\rto\mc U$. So in the following, we do not specify the refinement map $\iota_{\mc W,\mc U}$.

The refinement morphism $\iota_{\mc W,\mc U}$ pulls back a generalized cocycle $(\Lambda_{\mc U},\Omega_{\mc U})$ over $\K[\mc U]$ to a generalized cocycle $(\iota^*_{\mc W,\mc U}\Lambda_{\mc U},\iota^*_{\mc W,\mc U}\Omega_{\mc U})$ over $\K[\mc W]$. Moreover similar as Lemma \ref{L canonical-iso-on-pullbacks} we have
\[
\iota^*_{\mc W,\mc U}(\A\rtimes_{\Lambda_{\mc U},\Omega_{\mc U}}\K[\mc U])\cong\A\rtimes_{\iota^*_{\mc W,\mc U}\Lambda_{\mc U},\iota^*_{\mc W,\mc U}\Omega_{\mc U}}\K[\mc W].
\]
Therefore we have a refinement map
\begin{align*}
\iota_{\mc W,\mc U}^*:\mathrm{Iso}(\K[\mc U],\A,\bar \Lambda_{\mc U})
\rto
\mathrm{Iso}(\K[\mc W],\A,\bar \Lambda_{\mc W}),\qq
[\A\ltimes_{\Lambda_{\mc U},\Omega_{\mc U}}\K[\mc U]]&\mapsto [\A\ltimes_{\iota^*_{\mc W,\mc U}\Lambda_{\mc U},\iota^*_{\mc W,\mc U}\Omega_{\mc U}}\K[\mc W]].
\end{align*}

By taking all open covers of $K^0$, the collection of isomorphic classes $\mathrm{Iso}(\K[\mc U],\A,\bar \Lambda_{\mc U})$ and of refinement maps $\iota_{\mc W,\mc U}^*:\mathrm{Iso}(\K[\mc U],\A,\bar \Lambda_{\mc U}) \rto \mathrm{Iso}(\K[\mc W],\A,\bar \Lambda_{\mc W})$ forms a direct system. Thus we can form the direct limit set
\[
\lim_{\mc U} \mathrm{Iso}(\K[\mc U],\A,\bar \Lambda_{\mc U}).
\]
Then by the definition of equivalence of locally topological product Lie groupoid fibrations in Definition \ref{D equivalence-extension} and the definition of direct limit we get the following result.
\begin{prop}
There is a 1-to-1 map
\[
\mathrm{LTPLGF}(\K,\A,\bar \Lambda)\xymatrix{
\ar[rr]^-{\text{\em 1-to-1}} && \ar[ll]}\lim_{\mc U} \mathrm{Iso}(\K[\mc U],\A,\bar \Lambda_{\mc U}),
\]
induced by identity map on the set $\widetilde{\mathrm{LTPLGF}}(\K,\A,\bar \Lambda)$.
\end{prop}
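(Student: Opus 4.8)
The plan is to realize both $\mathrm{Ext}(\K,\A,\bar\Lambda)$ and the direct limit $\lim_{\mc U}\mathrm{Iso}(\K[\mc U],\A,\bar\Lambda_{\mc U})$ as receiving surjections from the single set $\widetilde{\mathrm{Ext}}(\K,\A,\bar\Lambda)$, and to prove that these two surjections have exactly the same fibers; once this is done, the identity map on $\widetilde{\mathrm{Ext}}(\K,\A,\bar\Lambda)$ descends to a well-defined bijection between the two quotients, which is the asserted correspondence. First I would spell out the two maps. An element of $\widetilde{\mathrm{Ext}}(\K,\A,\bar\Lambda)$ is a topological product extension $\A\rtimes_{\Lambda_{\mc U},\Omega_{\mc U}}\K[\mc U]$ for some open cover $\mc U$ of $K^0$. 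It maps to its equivalence class in $\mathrm{Ext}(\K,\A,\bar\Lambda)$ under the relation of Definition \ref{D equivalence-extension}, and on the other hand to its isomorphism class in $\mathrm{Iso}(\K[\mc U],\A,\bar\Lambda_{\mc U})$ followed by the canonical map into the direct limit. Both maps are visibly surjective: the first because $\mathrm{Ext}(\K,\A,\bar\Lambda)$ is by definition the quotient of $\widetilde{\mathrm{Ext}}(\K,\A,\bar\Lambda)$, and the second because every class in the colimit is represented at some level $\mc U$, hence is hit by the corresponding element of $\widetilde{\mathrm{Ext}}(\K,\A,\bar\Lambda)$.

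The heart of the argument is the comparison of fibers. Fix two elements $\A\rtimes_{\Lambda_1,\Omega_1}\K[\mc U_1]$ and $\A\rtimes_{\Lambda_2,\Omega_2}\K[\mc U_2]$ of $\widetilde{\mathrm{Ext}}(\K,\A,\bar\Lambda)$. On the colimit side, by the construction of a filtered limit of sets, they become equal in $\lim_{\mc U}\mathrm{Iso}(\K[\mc U],\A,\bar\Lambda_{\mc U})$ if and only if there is a common refinement $\mc W$ of $\mc U_1$ and $\mc U_2$ with $\iota^*_{\mc W,\mc U_1}[\A\rtimes_{\Lambda_1,\Omega_1}\K[\mc U_1]]=\iota^*_{\mc W,\mc U_2}[\A\rtimes_{\Lambda_2,\Omega_2}\K[\mc U_2]]$ in $\mathrm{Iso}(\K[\mc W],\A,\bar\Lambda_{\mc W})$; here the directedness of the poset of open covers of $K^0$ under refinement guarantees that such a comparison can always be performed at a single level. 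By Lemma \ref{L canonical-iso-on-pullbacks} the pulled-back extensions are the product extensions of the pulled-back cocycles, and by the lemma identifying isomorphic product extensions with equivalent generalized cocycles (stated just before Theorem \ref{T classification-iso}), the displayed equality of isomorphism classes is equivalent to the equivalence of the pulled-back cocycles $(\Lambda_{1,\mc W},\Omega_{1,\mc W})$ and $(\Lambda_{2,\mc W},\Omega_{2,\mc W})$ over $\K[\mc W]$.

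On the $\mathrm{Ext}$ side, the two elements coincide precisely when the underlying fiber bundles are equivalent in the sense of Definition \ref{D equivalence-extension}, and Lemma \ref{L canonical-iso-on-refinements} translates this verbatim into the existence of a common refinement $\mc W$ of $\mc U_1$ and $\mc U_2$ over which the pulled-back cocycles are equivalent. Comparing the two descriptions, the fiber of $\widetilde{\mathrm{Ext}}(\K,\A,\bar\Lambda)\to\mathrm{Ext}(\K,\A,\bar\Lambda)$ and that of $\widetilde{\mathrm{Ext}}(\K,\A,\bar\Lambda)\to\lim_{\mc U}\mathrm{Iso}(\K[\mc U],\A,\bar\Lambda_{\mc U})$ through any given point agree. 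Consequently the identity on $\widetilde{\mathrm{Ext}}(\K,\A,\bar\Lambda)$ induces a map $\mathrm{Ext}(\K,\A,\bar\Lambda)\to\lim_{\mc U}\mathrm{Iso}(\K[\mc U],\A,\bar\Lambda_{\mc U})$ that is well defined and injective (equal fibers) and surjective (surjectivity of the second map), hence a bijection.

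The step I expect to be the main obstacle is the bookkeeping that reconciles the two a priori different notions of refinement: Definition \ref{D equivalence-extension} permits \emph{independent} refinements of the two bases $\K[\mc U_1]$ and $\K[\mc U_2]$, whereas the colimit is indexed by a \emph{single} open cover $\mc W$ of $K^0$ that simultaneously refines $\mc U_1$ and $\mc U_2$. The reconciliation rests on the observation that an open cover refining $\K[\mc U]$ assembles into an open cover of $K^0$ refining $\mc U$, with matching pullback groupoids and cocycles. Rather than redo this comparison by hand, I would invoke Lemma \ref{L canonical-iso-on-refinements}, which already packages exactly this equivalence, so that the proof reduces to the purely formal "same fibers, hence induced bijection" argument above.
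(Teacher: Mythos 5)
Your proposal is correct and takes essentially the same approach as the paper: the paper's own proof is just the one-line observation that the claim follows from Definition \ref{D equivalence-extension}, the direct-limit construction, and the preceding Lemmas \ref{L canonical-iso-on-pullbacks} and \ref{L canonical-iso-on-refinements}, which is precisely the fiber-comparison argument you spell out. Your write-up merely makes explicit what the paper leaves implicit, including the reconciliation of independent refinements with a single common refining cover, which is exactly the content of Lemma \ref{L canonical-iso-on-refinements} as you note.
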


We next consider the cohomology groups $H^*_{\bar \Lambda_{\mc U}}(\K[\mc U],Z_\A)$ for open covers of $K^0$. For a refinement $\iota_{\mc W,\mc U}:\mc W\rto \mc U$ we also have a group homomorphism
\[
\iota^*_{\mc W,\mc U}: H^*_{\bar \Lambda_{\mc U}}(\K[\mc U],Z_\A)\rto H^*_{\bar \Lambda_{\mc W}}(\K[\mc W],Z_\A).
\]
As the construction of $\check{\mathrm{C}}$ech cohomology for manifolds in \cite{Warner1983} and for simplicial spaces in \cite{Tu2006}, this homomorphism $\iota^*_{\mc W,\mc U}$ does not depend on the choice of the explicit refinement map $\iota_{\mc W,\mc U}:\mc W\rto\mc U$. Then the collection of these cohomology groups and these group homomorphisms also forms a direct system, and we have the direct limit
\[
\lim_{\mc U} H^*_{\bar \Lambda_{\mc U}}(\K[\mc U],Z_\A).
\]

Now fix a reference smooth lifting $\Lambda_0:K^1\rto\mathrm{SAut}(\A)$ of $\bar \Lambda$, and a smooth cofactor $\Omega_0$ such that $(\Lambda_0,\Omega_0)$ is a generalized cocycle. The generalized cocycle $(\Lambda_0,\Omega_0)$ pulls back to generalized cocycle $\sq^*_{\mc U}(\Lambda_0,\Omega_0)=(\Lambda_{0,\mc U},\Omega_{0,\mc U})$ over $\K[\mc U]$ for every open cover $\mc U$ of $K^0$. By Theorem \ref{T classification-iso} we have the identification
\begin{align*}
\Phi_{\mc U}: H^2_{\bar \Lambda_{\mc U}}(\K[\mc U],Z_\A)
&
\xymatrix{~\ar[rr]^-{\text{1-to-1}} && ~\ar[ll]}
\mathrm{Iso}(\K[\mc U],\A,\bar \Lambda_{\mc U}), \qq
[c]
\mapsto
[\K\ltimes_{\Lambda_{0,\mc U}, c\Omega_{0,\mc U}}\A]
\end{align*}
for every open cover $\mc U$ of $K^0$.

\begin{theorem}\label{T classification-equiv}
Two topological product Lie groupoid fibrations $\K[\mc U]\ltimes_{\Lambda_{0,\mc U},c\Omega_{0,\mc U}}\A$ and $\K[\mc V]\ltimes_{\Lambda_{0,\mc V},c'\Omega_{0,\mc V}}\A$ are equivalent, if there is a third cover $\mc W$ of $K^0$ that refines both $\mc U$ and $\mc V$ via $\iota_{\mc W,\mc U}$ and $\iota_{\mc W,\mc V}$ such that
\[
\iota^*_{\mc W,\mc U}([c])=\iota^*_{\mc W,\mc V}([c']).
\]
Moreover, we have a 1-to-1 map
\[
\mathrm{LTPLGF}(\K,\A,\bar \Lambda)\xymatrix{
\ar[rr]^-{\text{\em 1-to-1}}&&
\ar[ll]}\lim_{\mc U} H^2_{\bar \Lambda_{\mc U}}(\K[\mc U],Z_\A).
\]
\end{theorem}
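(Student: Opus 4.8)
The plan is to promote the family of bijections $\Phi_{\mc U}$ from Theorem \ref{T classification-iso} to a morphism between the two direct systems indexed by open covers of $K^0$ --- the cohomological system $\{H^2_{\bar \Lambda_{\mc U}}(\K[\mc U],Z_\A),\,\iota^*_{\mc W,\mc U}\}$ and the geometric system $\{\mathrm{Iso}(\K[\mc U],\A,\bar \Lambda_{\mc U}),\,\iota^*_{\mc W,\mc U}\}$ --- and then to pass to the direct limit. Since each $\Phi_{\mc U}$ is already a bijection, the only new thing to prove is that, for every refinement $\iota_{\mc W,\mc U}:\mc W\rto\mc U$, the square
\[
\xymatrix{
H^2_{\bar \Lambda_{\mc U}}(\K[\mc U],Z_\A) \ar[rr]^-{\Phi_{\mc U}} \ar[d]_-{\iota^*_{\mc W,\mc U}} &&
\mathrm{Iso}(\K[\mc U],\A,\bar \Lambda_{\mc U}) \ar[d]^-{\iota^*_{\mc W,\mc U}} \\
H^2_{\bar \Lambda_{\mc W}}(\K[\mc W],Z_\A) \ar[rr]^-{\Phi_{\mc W}} &&
\mathrm{Iso}(\K[\mc W],\A,\bar \Lambda_{\mc W})
}
\]
commutes. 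From this and the bijectivity of each $\Phi_{\mc U}$ the induced map on limits is automatically a bijection.

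\textbf{Commutativity of the square.} I would evaluate both composites on a class $[c]$. Going right then down produces $[\A\rtimes_{\iota^*_{\mc W,\mc U}\Lambda_{0,\mc U},\,\iota^*_{\mc W,\mc U}(c\Omega_{0,\mc U})}\K[\mc W]]$, while going down then right produces $[\A\rtimes_{\Lambda_{0,\mc W},\,(\iota^*_{\mc W,\mc U}c)\Omega_{0,\mc W}}\K[\mc W]]$. These representatives literally coincide: the commutative triangle \eqref{E 5.1} together with \eqref{E 5.2} gives $q^1_{\mc U}\circ\iota_{\mc W,\mc U}=q^1_{\mc W}$ on arrows, so that $\iota^*_{\mc W,\mc U}\Lambda_{0,\mc U}=\Lambda_0\circ q^1_{\mc U}\circ\iota_{\mc W,\mc U}=\Lambda_0\circ q^1_{\mc W}=\Lambda_{0,\mc W}$, and likewise $\iota^*_{\mc W,\mc U}\Omega_{0,\mc U}=\Omega_{0,\mc W}$ (both equal $\sq^*_{\mc W}(\Lambda_0,\Omega_0)$ as in Lemma \ref{L canonical-iso-on-pullbacks}). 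Since the refinement pullback acts factorwise and is multiplicative in the $Z_\A$-coefficient, $\iota^*_{\mc W,\mc U}(c\Omega_{0,\mc U})=(\iota^*_{\mc W,\mc U}c)\,\Omega_{0,\mc W}$, so the two representatives agree. I would also cite the already-established fact that $\iota^*_{\mc W,\mc U}$ does not depend on the choice of refine map $\iota_{\mc W,\mc U}$, which is what makes the square --- and hence the limit --- canonical.

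\textbf{The equivalence criterion.} For the first assertion, I would combine Lemma \ref{L canonical-iso-on-refinements} with the bijectivity of $\Phi_{\mc W}$. By Lemma \ref{L canonical-iso-on-refinements}, the extensions $\A\rtimes_{\Lambda_{0,\mc U},c\Omega_{0,\mc U}}\K[\mc U]$ and $\A\rtimes_{\Lambda_{0,\mc V},c'\Omega_{0,\mc V}}\K[\mc V]$ are equivalent precisely when their pullbacks to a common refinement $\mc W$ are equivalent as generalized cocycles, i.e. when they represent the same class in $\mathrm{Iso}(\K[\mc W],\A,\bar \Lambda_{\mc W})$. Applying the bijection $\Phi_{\mc W}$ and the commuting square, this says exactly $\iota^*_{\mc W,\mc U}[c]=\iota^*_{\mc W,\mc V}[c']$ in $H^2_{\bar \Lambda_{\mc W}}(\K[\mc W],Z_\A)$, which is the stated criterion (and in fact upgrades ``if'' to ``if and only if'').

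\textbf{Passage to the limit and main obstacle.} Having the commuting squares and bijections $\Phi_{\mc U}$, the family $\{\Phi_{\mc U}\}$ is an isomorphism of direct systems, hence induces a bijection
\[
\lim_{\mc U} H^2_{\bar \Lambda_{\mc U}}(\K[\mc U],Z_\A)\xrightarrow{\ \cong\ }\lim_{\mc U}\mathrm{Iso}(\K[\mc U],\A,\bar \Lambda_{\mc U}).
\]
Composing with the bijection $\mathrm{Ext}(\K,\A,\bar \Lambda)\leftrightarrow\lim_{\mc U}\mathrm{Iso}(\K[\mc U],\A,\bar \Lambda_{\mc U})$ from the Proposition preceding this theorem then yields the desired $1$-to-$1$ map $\mathrm{Ext}(\K,\A,\bar \Lambda)\leftrightarrow\lim_{\mc U}H^2_{\bar \Lambda_{\mc U}}(\K[\mc U],Z_\A)$. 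I expect the main obstacle to be the bookkeeping in the commuting square: verifying that the cohomological refinement map on $Z_\A$-valued cochains and the geometric refinement map on isomorphism classes of extensions are genuinely intertwined by $\Phi$. This reduces, as above, to the compatible pullback of the fixed reference cocycle $(\Lambda_0,\Omega_0)$ under $\sq_{\mc W}=\sq_{\mc U}\circ\iota_{\mc W,\mc U}$ and to the refine-map independence of $\iota^*$, both of which are supplied by Lemmas \ref{L canonical-iso-on-pullbacks} and \ref{L canonical-iso-on-refinements} and the diagram \eqref{E 5.1}.
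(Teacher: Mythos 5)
Your proposal is correct and follows essentially the same route as the paper: the first assertion is obtained as a restatement of Lemma \ref{L canonical-iso-on-refinements}, and the bijection on limits is deduced from the commutativity of the square intertwining $\Phi_{\mc U}$, $\Phi_{\mc W}$ and $\iota^*_{\mc W,\mc U}$, which in both arguments rests on the fact that all reference cocycles $(\Lambda_{0,\mc U},\Omega_{0,\mc U})$ are pullbacks of the single fixed cocycle $(\Lambda_0,\Omega_0)$. Your write-up merely makes explicit the factorwise bookkeeping ($\iota^*_{\mc W,\mc U}\Lambda_{0,\mc U}=\Lambda_{0,\mc W}$, multiplicativity in the $Z_\A$-coefficient, refine-map independence) that the paper leaves implicit.
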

\begin{proof}
The first assertion follows from the definition and is a restatement of Lemma \ref{L canonical-iso-on-refinements}. To prove the second assertion, we only have to show that the following diagram
\[
\xymatrix{
H^2_{\bar \Lambda_{\mc U}}(\K[\mc U],Z_\A)
\ar[rr]^-{\Phi_{\mc U}} \ar[d]_-{\iota^*_{\mc W,\mc U}}&&
\mathrm{Iso}(\K[\mc U],\A,\bar \Lambda_{\mc U}) \ar[d]_-{\iota^*_{\mc W,\mc U}}
\\
H^2_{\bar \Lambda_{\mc W}}(\K[\mc W],Z_\A)
\ar[rr]^-{\Phi_{\mc W}}&&
\mathrm{Iso}(\K[\mc W],\A,\bar \Lambda_{\mc W})
}
\]
is commutative. This follows from the fact that all reference generalized cocycles $(\Lambda_{0,\mc U},\Omega_{0,\mc U})$ are pull backs of the fixed generalized cocycle $(\Lambda_0,\Omega_0)$ via refinements. This finishes the proof.
\end{proof}

\begin{example}\label{Example 4}
Consider the Example \ref{Example 3}. When $c_{\bar\omega}=0$, we have topological product fiber bundles with fiber $A\ltimes X$ over $[\bullet/K]$ whose band are $\bar\Lambda$. In this circumstance, since $H^2_{\bar\Lambda}([\bullet/K],Z_{A\ltimes X})= H^2_{\bar\omega}(K,Z(A))$, there are exactly $H^2_{\bar\omega}(K,Z(A))$ non-isomorphic topological product fiber bundles over $[\bullet/K]$ with fiber $A\ltimes X$ and band $\bar\Lambda$.

Moreover, since $[\bullet/K]$ has no nontrivial refinements, all locally topological product Lie groupoid fibrations over $[\bullet/K]$ with fiber $A\ltimes X$ are topologically product Lie groupoid fibrations. Hence there are also exactly $H^2_{\bar\omega}(K,Z(A))$ inequivalence locally topological product fiber bundles over $[\bullet/K]$ with fiber $A\ltimes X$ and band induced by $\bar\Lambda$. That is
\[
\mathrm{LTPLGF}(\K,\A,\bar \Lambda)\xymatrix{
\ar[rr]^-{\text{ 1-to-1}}&&
\ar[ll]}H^2_{\bar \omega}([\bullet/K],Z(A))\xymatrix{
\ar[rr]^-{\text{ 1-to-1}}&&
\ar[ll]}
\mathrm{Iso}([\bullet/K],A\ltimes X,\bar \Lambda).
\]
\end{example}

\section{Symplectic structure over locally topological product Lie groupoid fibrations}\label{S 6}

Suppose that both $\K$ and $\A$ are orbifold groupoids. Then the topological product Lie groupoid fibration $\G=\A\rtimes_{\Lambda,\Omega}\K$ associated to a generalized cocycle $(\Lambda,\Omega)$ is also an orbifold groupoid. In this section we show that under some appropriate conditions, there is a natural symplectic structure over $\G=\A\rtimes_{\Lambda,\Omega}\K$ when both $\K$ and $\A$ are symplectic orbifold groupoids.

\begin{theorem}\label{T induced-symplectic-on-KltimesA}
Let $\omega_K$ and $\omega_A$ be the symplectic forms over the orbifold groupoids $\K$ and $\A$ respectively. If $\Lambda:K^1\rto \mathrm{SAut}^0(\A)$ preserves the symplectic structure over $\A$ and is locally constant, then the symplectic form
\[
\omega_G:=\omega_A\times\omega_K=pr_1^*\omega_A\wedge pr_2^*\omega_K
\]
is a symplectic form over $\G=\A\rtimes_{\Lambda,\Omega}\K$, where $pr_i$ are the projections from $G^0=A^0\times K^0$ to its two factors for $i=1,2$.
\end{theorem}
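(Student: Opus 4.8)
The plan is to verify the three defining properties of a symplectic form on the orbifold groupoid $\G=\A\rtimes_{\Lambda,\Omega}\K$: that $\omega_G$ is a closed, nondegenerate $2$-form on $G^0=A^0\times K^0$, and that it satisfies the groupoid-compatibility $s_\G^*\omega_G=t_\G^*\omega_G$ over $G^1=A^1\times K^1$. Since $\omega_G=pr_1^*\omega_A+pr_2^*\omega_K$ is the product of the two symplectic forms, closedness is immediate from $d\,pr_i^*=pr_i^*\,d$ together with $d\omega_A=d\omega_K=0$, and nondegeneracy is immediate from the block-diagonal splitting $T_{(a,x)}(A^0\times K^0)=T_aA^0\oplus T_xK^0$, on which $\omega_G$ restricts to $\omega_A$ and $\omega_K$ respectively. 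So essentially all the content lies in the compatibility identity.

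For that identity I would first record the two structure maps from \eqref{E def-s-in-K-ltimes-A} and \eqref{E def-t-in-K-ltimes-A}, namely $s_\G(\alpha,\xi)=(s_\A(\alpha),s_\K(\xi))$ and $t_\G(\alpha,\xi)=(\Lambda_\xi(t_\A(\alpha)),t_\K(\xi))$. Because $s_\G$ is a genuine product map, its pullback splits cleanly as $s_\G^*\omega_G=pr_{A^1}^*s_\A^*\omega_A+pr_{K^1}^*s_\K^*\omega_K$, where $pr_{A^1},pr_{K^1}$ denote the two projections of $A^1\times K^1$. The target map is the subtle one, since its $A^0$-component $\Lambda_\xi(t_\A(\alpha))$ depends on both arguments.

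Here is where local constancy enters, and it is the crux. I would work on a neighborhood of an arbitrary point $(\alpha_0,\xi_0)\in A^1\times K^1$ on which $\xi\mapsto\Lambda_\xi$ is constant, equal to the single automorphism $\Lambda_{\xi_0}$. On such a neighborhood the target map factors as a product, $t_\G=(\Lambda_{\xi_0}\circ t_\A\circ pr_{A^1},\,t_\K\circ pr_{K^1})$, so that $t_\G^*\omega_G=pr_{A^1}^*t_\A^*\Lambda_{\xi_0}^{0,*}\omega_A+pr_{K^1}^*t_\K^*\omega_K$. Using that $\Lambda_{\xi_0}$ is a symplectomorphism, so $\Lambda_{\xi_0}^{0,*}\omega_A=\omega_A$, and then the two individual compatibility relations $s_\A^*\omega_A=t_\A^*\omega_A$ and $s_\K^*\omega_K=t_\K^*\omega_K$ coming from the symplectic orbifold groupoid structures on $\A$ and $\K$, I obtain
\[
t_\G^*\omega_G=pr_{A^1}^*t_\A^*\omega_A+pr_{K^1}^*t_\K^*\omega_K=pr_{A^1}^*s_\A^*\omega_A+pr_{K^1}^*s_\K^*\omega_K=s_\G^*\omega_G
\]
on this neighborhood; since the point was arbitrary, the identity holds over all of $G^1$.

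The main obstacle is precisely the $\xi$-dependence of the object component $\Lambda_\xi(t_\A(\alpha))$ of the target map: without local constancy, differentiating $\Lambda_\xi$ in the $K^1$-directions would produce cross terms mixing $TA^0$ and $TK^1$ in $t_\G^*(pr_1^*\omega_A)$, and these need not cancel, so $t_\G^*\omega_G$ would generally fail to match $s_\G^*\omega_G$. The hypothesis that $\Lambda$ is locally constant is exactly what annihilates these cross terms and lets the target map split as a product locally; combined with the symplectomorphism hypothesis, it reduces everything to the separate compatibility conditions already available for $\A$ and $\K$. Finally I would note that $\G$ is proper and \'etale because $\A$ and $\K$ are and these properties are preserved by the product-type structure maps, so that $(\G,\omega_G)$ is indeed a symplectic orbifold groupoid.
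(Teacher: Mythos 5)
Your proposal is correct and takes essentially the same route as the paper: the crux in both is that local constancy of $\Lambda$ annihilates the $\xi$-dependence of the target map (the paper phrases this as $\frac{\partial f}{\partial\xi}=0$ for $f(\alpha,\xi)=\Lambda_\xi\circ t_\A(\alpha)$, you as a local product factorization of $t_\G$), after which the symplectomorphism hypothesis $\Lambda_\xi^{0,*}\omega_A=\omega_A$ and the compatibilities $s^*\omega=t^*\omega$ for $\A$ and $\K$ finish the argument exactly as in the paper. One minor point in your favor: you write the product form correctly as $pr_1^*\omega_A+pr_2^*\omega_K$, whereas the paper's displayed formula $pr_1^*\omega_A\wedge pr_2^*\omega_K$ is a slip (a wedge of two $2$-forms is a $4$-form), though its proof clearly intends the sum.
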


\begin{proof}
By the definition of symplectic form over orbifold groupoid we need to show that for two arbitrary tangent vector fields $X_G:=(X_A,X_K),Y_G:=(Y_A,Y_K)\in \Gamma(TG^1)=\Gamma(T(A^1\times K^1))$ we have
\[
s^*_G\omega_G(X_G,Y_G)=t^*_G\omega_G(X_G,Y_G),
\]
where $s_G$ and $t_G$ are the source and target maps of $\G$ defined in \eqref{E def-s-in-K-ltimes-A} and \eqref{E def-t-in-K-ltimes-A}.

First of all, by the definition of source map in \eqref{E def-s-in-K-ltimes-A} we have
\begin{align*}
s^*_G\omega_G(X_G,Y_G)&=\omega_G(s_{G,*}X_G,s_{G,*}Y_G) \\
&=\omega_G\left((s_{A,*}X_A,s_{A,*}Y_A),(s_{K,*}X_K,s_{K,*}Y_K)\right)\\
&=\omega_A(s_{A,*}X_A,s_{A,*}Y_A)\cdot \omega_K(s_{K,*}X_K,s_{K,*}Y_K).
\end{align*}
where $s_K$ and $s_A$ are the source maps of $\K$ and $\A$ respectively. Next we consider $t^*_G\omega_G(X_G,Y_G)$. By the definition of pulling back of forms we have
\begin{align*}
t^*_G\omega_G(X_G,Y_G)&=\omega_G(t_{G,*}X_G,t_{G,*}Y_G).
\end{align*}
However $t_{G,*}$ is much more complicate by the definition in \eqref{E def-t-in-K-ltimes-A}. We have
\[
t_G(\alpha,\xi)=(\Lambda_\xi\circ t_A(\alpha),t_K(\xi)).
\]
Write $\Lambda_\xi\circ t_A(\alpha)$ as $f(\alpha,\xi)$. Then
\[
t_{G,*}(X_G)=(\frac{\partial f}{\partial\alpha}X_A ,t_{K,*}X_K+\frac{\partial f}{\partial \xi}X_K).
\]
However, by the assumption that $\Lambda$ is locally constant we have $\frac{\partial f}{\partial \xi}=0$. Therefore
$
t_{G,*}(X_G)=(\frac{\partial f}{\partial\alpha}X_A, t_{K,*}X_K).
$
Similarly $t_{G,*}(Y_G)=(\frac{\partial f}{\partial\alpha}Y_A, t_{K,*}Y_K)$. So we have
\begin{align*}
t^*_G\omega_G(X_G,Y_G) &=\omega_G\left((\frac{\partial f}{\partial\alpha}X_A, t_{K,*}X_K),(\frac{\partial f}{\partial\alpha}Y_A, t_{K,*}Y_K)\right)\\
&=\omega_A(\frac{\partial f}{\partial\alpha}X_A,\frac{\partial f}{\partial\alpha}Y_A)\cdot\omega_K(t_{K,*}X_K ,t_{K,*}Y_K )\\
&=\omega_A(\frac{\partial f}{\partial\alpha}X_A,\frac{\partial f}{\partial\alpha}Y_A)\cdot\omega_K(s_{K,*}X_K ,s_{K,*}Y_K ),
\end{align*}
where for the last equality we have used the assumption that $(\K,\omega_K)$ is a symplectic orbifold groupoid. On the other hand we have
$
\frac{\partial f}{\partial\alpha}X_A=d(\Lambda_\xi)\circ t_{A,*}(X_A),
\frac{\partial f}{\partial\alpha_A}Y_A=d(\Lambda_\xi)\circ t_{A,*}(Y_A).
$
Therefore
\begin{align*}
\omega_A(\frac{\partial f}{\partial\alpha}X_A,\frac{\partial f}{\partial\alpha}Y_A)
&=\omega_A(d(\Lambda_\xi)\circ t_{A,*}(X_A),d(\Lambda_\xi)\circ t_{A,*}(Y_A))\\
&=\Lambda_\xi^*\omega_A(t_{A,*}X_A,t_{A,*}Y_A)\\
&=\omega_A(t_{A,*}X_A,t_{A,*}Y_A)\\
&=\omega_A(s_{A,*}X_A,s_{A,*}Y_A),
\end{align*}
where we have use the assumption that $\Lambda$ preserves the symplectic form over $\A$ for the third equality and the assumption that $\omega_A$ is a symplectic form over $\A$ for the last equality. Consequently,
\[
t^*_G\omega_G(X_G,Y_G)=s^*_G\omega_G(X_G,Y_G).
\]
Therefore $\omega_G$ is a symplectic form over $\G=\A\rtimes_{\Lambda,\Omega}\K$.
 \end{proof}

Obviously, the restriction of $\omega_G$ over the fiber of $\A\rtimes_{\Lambda,\Omega}\K\rto\K$, i.e. the fiber of the kernel, is isomorphic to $(\A,\omega_A)$.

It is direct to see that the existence of symplectic forms is a Morita equivalence invariant (see for example \cite[Proposition 7.3]{Du-Chen-Wang2018}), that is if $\f:\G\rto \sH$ is an equivalence between orbifold groupoids, then a symplectic form over $\G$ naturally induces a symplectic form over $\sH$ and vice versa.

\begin{coro}\label{C induced-symplectic-on-extension}
Let $\G\xrightarrow{\phi}\K$ be a locally topological product Lie groupoid fibration by $\A$ over $\K$ with $\G[\phi^*\mc U]\rto\K[\mc U]$ being a topological product Lie groupoid fibration by $\A$ that refines $\G\rto\K$. Suppose that both $\A$ and $\K$ are symplectic orbifold groupoids. Then if the pre-action map of $\G[\phi^*\mc U]\rto\K[\mc U]$ satisfies the assumption in Theorem \ref{T induced-symplectic-on-KltimesA}, there is a natural symplectic form over $\G$.
\end{coro}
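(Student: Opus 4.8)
The plan is to reduce the corollary to Theorem \ref{T induced-symplectic-on-KltimesA} by transporting the symplectic form along the refinement equivalence. The key observation is that $\G[\phi^*\mc U]\rto\K[\mc U]$ is, by hypothesis, isomorphic to a topological product extension $\A\rtimes_{\Lambda,\Omega}\K[\mc U]$ (Corollary \ref{C G-cong-KSOA}), that the refinement morphism $\sq_{\phi^*\mc U}:\G[\phi^*\mc U]\rto\G$ is an equivalence of orbifold groupoids, and that the existence of a symplectic form is a Morita equivalence invariant, as recalled just above the statement.

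First I would endow $\K[\mc U]$ with a symplectic structure. Since $\sq_{\mc U}:\K[\mc U]\rto\K$ is an equivalence whose object-level map $q^0_{\mc U}$ is an open embedding on each chart, the pullback $\omega_{K[\mc U]}:=(q^0_{\mc U})^*\omega_K$ is a symplectic form on $\K[\mc U]^0$, and the identity $s^*\omega_K=t^*\omega_K$ on $\K$ transfers to the analogous identity on $\K[\mc U]$ because $q^0_{\mc U}$ intertwines the source and target maps with those of $\K$; thus $(\K[\mc U],\omega_{K[\mc U]})$ is again a symplectic orbifold groupoid. Next, since the pre-action map of $\G[\phi^*\mc U]\rto\K[\mc U]$ is assumed to preserve $\omega_A$ and to be locally constant, Theorem \ref{T induced-symplectic-on-KltimesA} applies verbatim and produces a symplectic form
\[
\omega_{G[\phi^*\mc U]}=\omega_A\times\omega_{K[\mc U]}
\]
on the orbifold groupoid $\G[\phi^*\mc U]\cong\A\rtimes_{\Lambda,\Omega}\K[\mc U]$.

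Finally I would transport this form to $\G$. The refinement map $\sq_{\phi^*\mc U}:\G[\phi^*\mc U]\rto\G$ is an equivalence of Lie groupoids, and both its source and target are orbifold groupoids (the product extension is proper \'etale by the remark opening this section, and $\G$ is equivalent to it). Invoking the Morita invariance of the existence of symplectic forms, $\omega_{G[\phi^*\mc U]}$ induces a symplectic form on $\G$, which is the desired conclusion.

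The main obstacle, such as it is, is bookkeeping rather than analysis: one must check that every groupoid entering the argument---$\K[\mc U]$, the product extension, and $\G$ itself---is genuinely a (symplectic) orbifold groupoid, so that the Morita-invariance statement is legitimately available, and confirm that the transported form restricts to $(\A,\omega_A)$ on the fibers, as remarked after Theorem \ref{T induced-symplectic-on-KltimesA}. No computation beyond that already carried out in Theorem \ref{T induced-symplectic-on-KltimesA} is required.
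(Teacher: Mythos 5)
Your proposal is correct and follows exactly the route the paper intends: the corollary is stated immediately after the remark that existence of symplectic forms is Morita-invariant, so the intended argument is precisely your three steps---pull $\omega_K$ back to the refinement $\K[\mc U]$, apply Theorem \ref{T induced-symplectic-on-KltimesA} to the topological product extension $\G[\phi^*\mc U]\cong\A\rtimes_{\Lambda,\Omega}\K[\mc U]$, and transport the resulting form to $\G$ along the equivalence $\sq_{\phi^*\mc U}$. Your bookkeeping caveat (verifying that $\K[\mc U]$, the product extension, and $\G$ are all orbifold groupoids, using the remark opening Section \ref{S 6} and the fact that refinements by open covers preserve properness and \'etaleness) is exactly the right point to check and is handled correctly.
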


\begin{example}
For example, for a finite group $A$ consider an \'etale $A$-gerbe $\G$
\begin{align}\label{E A-gerbe}
\begin{split}
\xymatrix{
\ker j \ar@{^(->}[r]^-i \ar@<0.4ex>[d]\ar@<-0.4ex>[d]&
G^1 \ar[r]^-j \ar@<0.4ex>[d]\ar@<-0.4ex>[d] &
K^1 \ar@<0.4ex>[d]\ar@<-0.4ex>[d]\\
K^0 \ar@{=}[r] &
K^0 \ar@{=}[r] &
K^0}
\end{split}
\end{align}
over a symplectic orbifold groupoid $(\K,\omega_K)$, where $\ker j$ is a locally trivial $A$-bundle over $K^0$. By the commutative diagram \eqref{E A-gerbe} we see that $\omega_K$ on $K^0$ is also compatible with the source and target maps of $\G$. Hence we get the induced symplectic structure over $\G$.

On the other hand, $\G$ is a locally topological product Lie groupoid fibration by $[\bullet/A]$ over $\K$, and $G^0=\bullet\times K^0=K^0$. The symplectic form over $[\bullet/A]$ is trivial.
By taking an appropriate open cover $\mc U$ of $K^0$ we get the refinement (see for example \cite{Laurent-Gengoux-Stienon-Xu2009,Tang-Tseng2014a})
\[
\xymatrix{
A\times\K[\mc U]^0\ar@{^(->}[r]^-i \ar@<0.4ex>[d]\ar@<-0.4ex>[d]&
A\times\K[\mc U]^1\ar[r]^-j \ar@<0.4ex>[d]\ar@<-0.4ex>[d] &
\K[\mc U]^1 \ar@<0.4ex>[d]\ar@<-0.4ex>[d]\\
\K[\mc U]^0 \ar@{=}[r] &
\K[\mc U]^0 \ar@{=}[r] &
\K[\mc U]^0.}
\]
Then one can see the pre-action map $\Lambda_{\mc U}$ for the refinement fibration is locally constant. Since the symplectic form over $[\bullet/A]$ is trivial, $\Lambda_{\mc U}$ preserves the symplectic form on $[\bullet/A]$. Therefore by the previous corollary the $A$-gerbe $\G$ over the symplectic orbifold groupoid $\K$ also has a naturally induced symplectic form, which is exactly the one induced from $\omega_K$ via $G^0=K^0$.
\end{example}

\section{Induced fibration over morphism groupoid}\label{S 7}

Let $\G=\A\rtimes_{\Lambda,\Omega}\K\xrightarrow{\phi}\K$ be a topological product Lie groupoid fibration corresponding to a generalized cocycle $(\Lambda,\Omega)$ over $\K$. $\phi$ induces a map on coarse space
$
\overline{\phi}:\overline\G\rto\overline\K.
$
It is a fibration of topological space with fiber being the coarse space $\overline\A$ of $\A$. $\overline\phi$ induces a group homomorphism over homology groups
$
\overline\phi_*:H_*(\overline\G,\integer)\rto H_*(\overline\K,\integer).
$
We call a homology classes $\beta\in H_*(\overline\G,\integer)$ a {\bf fiber class} if
$
\overline\phi_*(\beta)=0\in H_*(\overline\K,\integer).
$
A fiber class gives rise to a homology class in $H_*(\overline\A,\integer)$, which is mapped to $\beta$ via the induced homomorphism of the inclusion of $\overline\A$ into $\overline\G$ as a fiber. We denote this class in $H_*(\overline\A,\integer)$ still by $\beta$.

When $\A$ and $\K$ are both symplectic orbifold groupoid, under the assumption in Theorem \ref{T induced-symplectic-on-KltimesA}, we get a symplectic structure over $\G$. Then we could study the orbifold Gromov--Witten theory of $\G$. The orbifold Gromov--Witten theory of $\G$ has a parameter being the homology classes in $H_2(\overline\G,\integer)$. It is natural to expect that orbifold Gromov--Witten theory of $\G$ w.r.t. to a degree 2 fiber class is determined by the orbifold Gromov--Witten theory of $\A$ and the geometry of the Lie groupoid fibration $\G\rto \K$. The first step to study the orbifold Gromov--Witten theory of $\G$ is to study the morphism groupoid of generalized morphisms from orbifold Riemannian surfaces to $\G$. This was carried out for general groupoids in  \cite{Chen-Du-WangR2019} by R. Wang and the first two authors. In this section we study the relation between the morphism groupoid of fiber class generalized morphisms from an orbifold groupoid $\sH=(H^1\rrto H^0)$ to $\G$ and the morphism groupoid of generalized morphisms from $\sH$ to $\A$.

\subsection{Morphism groupoids}\label{S generalized-morphism-to-A}
We first recall the definition of morphism groupoids from \cite{Chen-Du-WangR2019}. We only consider the morphism groupoid of generalized morphisms between Lie groupoids.
\begin{defn}
A {\bf generalized morphism} $(\U,\psi,\su):\sH\rightharpoonup\A$ consists of a triple $(\U,\psi,\su)$, where $\psi:\U\rto\sH$ is an equivalence of Lie groupoids and $\su:\U\rto \A$ is a strict morphism between Lie groupoids.
\end{defn}

\begin{defn}
Let $(\U_1,\psi_1,\su_1),(\U_2,\psi_2,\su_2):\sH\rightharpoonup\A$ be two generalized morphisms. An {\bf arrow} $\alpha:(\U_1,\psi_1,\su_1)\rto (\U_2,\psi_2,\su_2)$ is a natural transformation that fits into the following diagram
\[
\begin{tikzpicture}
\def \x{3}
\def \y{1}
\node (A00) at (0,0)       {$\sH$};
\node (A10) at (\x,0)      {$\U_1\times_\sH\U_2$};
\node (A11) at (\x,\y)     {$\U_1$};
\node (A1-1) at (\x,-1*\y) {$\U_2$};
\node (A30) at (3*\x,0)    {$\A$.};
\node at (1.8*\x,0) {$\Downarrow \alpha$};
\path (A00) edge [<-] node [auto] {$\scriptstyle{\psi_1}$} (A11);
\path (A00) edge [<-] node [auto,swap] {$\scriptstyle{\psi_2}$} (A1-1);
\path (A11) edge [->] node [auto] {$\scriptstyle{\su_1}$} (A30);
\path (A1-1) edge [->] node [auto,swap] {$\scriptstyle{\su_2}$} (A30);
\path (A10) edge [->] node [auto] {$\scriptstyle{\pi_1}$} (A11);
\path (A10) edge [->] node [auto,swap] {$\scriptstyle{\pi_2}$} (A1-1);
\end{tikzpicture}
\]
where $\U_1\times_\sH\U_2=\U_1\times_{\psi_1,\sH,\psi_2}\U_2$ is the fiber product (cf. \cite{Adem-Leida-Ruan2007,Chen-Du-WangR2019}) of $\psi_1:\U_1\rto\sH$ and $\psi_2:\U_2\rto \sH$.
\end{defn}

Denote by $\mathrm{Mor}^0(\sH,\A)$ the space of generalized morphism from $\sH$ to $\A$ and $\mathrm{Mor}^1(\sH,\A)$ the space of arrows between generalized morphisms in $\mathrm{Mor}^0(\sH,\A)$. Then there is a (vertical) composition ``$\bullet$'' between arrows that makes
\[
\sf Mor(H,A):=(\mathrm{Mor}^1(\sH,\A)\rrto \mathrm{Mor}^0(\sH,\A))
\]
into a groupoid\footnote{Here we do not consider the smooth structure over it, so we only view it as a set level groupoid}. In fact this composition is constructed from the vertical composition of natural transformations. See \cite[Theorem 3.7]{Chen-Du-WangR2019} for the explicit construction.

Similarly we have the morphism groupoid $\sf Mor(H,G)$. This groupoid has several interesting subgroupoids. Given a generalized morphism $(\U,\psi,\su):\sH\rightharpoonup \G$, there are induced continuous maps over coarse spaces
\[
\xymatrix{\overline\sH &
\overline\U \ar[l]_-{\overline\psi}\ar[l]^-\cong\ar[r]^-{\overline\su} & \overline\G.}
\]
We say that this generalized morphism represents a fiber class if
$
\overline\phi\circ\overline\su\circ \overline\psi\inv=\{pt\}\in\overline\K.
$
Denote by $\mathrm{Mor}^0(\sH,\G)_F$ the space of all fiber class generalized morphisms in $\mathrm{Mor}^0(\sH,\G)$. Then we get a sub-groupoid
\[
{\sf Mor(H,G)}_F={\sf Mor(H,G)}|_{\mathrm{Mor}^0(\sH,\G)_F}
\]
of $\sf Mor(H,G)$.

Next we assume that the coarse space $\overline\sH$ of $\sH$ has a fundamental class $[\overline\sH]\in H_*(\overline\sH,\integer)$; for example when $\sH$ is a compact oriented orbifold groupoid, the assumption holds. Take a fiber class $\beta\in H_*(\overline\G,\integer)$, which determined a class $\beta\in H_*(\overline\A,\integer)$. Let $\mathrm{Mor}^0(\sH,\G)_\beta$ be the space of all fiber class generalized morphisms in $\mathrm{Mor}^0(\sH,\G)_F$ such that $\overline\su_*\circ \overline\psi_*\inv([\overline\sH])=\beta$. Then we have another sub-groupoid
\[
{\sf Mor(H,G)}_\beta={\sf Mor(H,G)}|_{\mathrm{Mor}^0(\sH,\G)_\beta}.
\]
of $\sf Mor(H,G)$. Similarly, we have a sub-groupoid $\sf Mor(H,A)_\beta$ of $\sf Mor(H,A)$.

\subsection{Induced groupoid fibration}\label{S subs-induced-grp-extension-GW}
In this subsection we show that the generalized cocycle $(\Lambda,\Omega)$ induces a generalized cocycle  $(\tilde \Lambda,\tilde \Omega)$ with $\tilde \Lambda:K^1\rto\mathrm{SAut}^0(\sf Mor(H,A))$ and a fibration\footnote{Here note that the morphism groupoids are only set level groupoids, so the groupoid fibrations we get are only set level groupoid fibrations. We will deal with the smooth structure over morphism groupoids in \cite{Chen-Du-Ono}.} over $\K$ with fiber $\sf Mor(\sH,\A)$.

We first define $\tilde \Lambda$. For every $\xi\in K^1$, we have an automorphism $\Lambda_\xi:\A\rto \A$. It induces a morphism
\[
\tilde \Lambda_\xi:\sf Mor(H,A)\rto\sf Mor(H,A)
\qq
\mathrm{by}
\qq
(\U,\psi,\su)\mapsto (\U,\psi,\Lambda_\xi\circ\su)\qq\mathrm{and}\qq
\alpha\mapsto \Lambda_\xi\circ\alpha.
\]
It is direct to see that this $\tilde \Lambda_\xi$ is an automorphism of $\sf Mor(H,A)$. This gives us the $\tilde \Lambda$.

Next we define $\tilde \Omega$. It will assign each pair $(\xi,\eta)\in K^{[2]}=K^1\times_{t,s}K^1$ a natural transformation $\tilde \Omega(\xi,\eta)$ from $\sf id_{ Mor(H,A)}$ to $\tilde \Lambda_{\xi\eta}\inv \circ \tilde \Lambda_\eta\circ \tilde \Lambda_\xi$. We next write down the explicit expression of $\tilde \Omega$. First of all it is a map
\[
\tilde \Omega:K^1\times_{t,s}K^1\times \mathrm{Mor}^0(\sH,\A) \rto \mathrm{Mor}^1(\sH,\A) .
\]
Let
$
(\U_1,\psi_1,\su_1)\xrightarrow{\alpha_{\sf Mor}}(\U_2,\psi_2,\su_2)
$ 
be an arrow in $\mathrm{Mor}(\sH,\A)^1$. For $i=1,2$,
\[
\tilde \Lambda_{\xi\eta}\inv \circ \tilde \Lambda_\eta\circ \tilde \Lambda_\xi(\U_i,\psi_i,\su_i)=(\U_i,\psi_i, \Lambda_{\xi\eta}\inv\circ\Lambda_\eta\circ\Lambda_\xi\circ\su_i).
\]
So $\tilde \Omega(\xi,\eta,(\U_i,\psi_i,\su_i))$ is an arrow from $(\U_i,\psi_i,\su_i)$ to $(\U_i,\psi_i, \Lambda_{\xi\eta}\inv \circ \Lambda_\eta\circ \Lambda_\xi\circ \su_i)$, i.e.
\[
\begin{tikzpicture}
\def \x{3.8}
\def \y{1}
\node (A00) at (0,0)       {$\sH$};
\node (A10) at (\x,0)      {$\U_i\times_\sH\U_i$};
\node (A11) at (\x,\y)     {$\U_i$};
\node (A1-1) at (\x,-1*\y) {$\U_i$};
\node (A30) at (3*\x,0)    {$\sf A$.};
\node at (1.87*\x,0) {$\Downarrow \tilde \Omega(\xi,\eta,(\U_i,\psi_i,\su_i))$};
\path (A00) edge [<-] node [auto] {$\scriptstyle{\psi_i}$} (A11);
\path (A00) edge [<-] node [auto,swap] {$\scriptstyle{\psi_i}$} (A1-1);
\path (A11) edge [->] node [auto] {$\scriptstyle{\su_i}$} (A30);
\path (A1-1) edge [->] node [auto,swap] {$\scriptstyle{\Lambda_{\xi\eta}\inv \circ\Lambda_\eta\circ  \Lambda_\xi\circ\su_i}$} (A30);
\path (A10) edge [->] node [auto] {$\scriptstyle{\pi_i}$} (A11);
\path (A10) edge [->] node [auto,swap] {$\scriptstyle{\pi_i}$} (A1-1);
\end{tikzpicture}
\]
An object in the fiber product $\U_i\times_\sH\U_i$ is of the form
\[
\xymatrix{a_1 \ar@{.>}[rr]^-{\alpha_\sH} && a_2}
\]
with $a_i\in \U^0_i$ and $\alpha_\sH:\psi_i^0(a_1)\rto \psi^0_i(a_2)$ in $\sH$. Since $\psi_i$ is an equivalence, there is an arrow $\alpha_{\U_i}:a_1\rto a_2$ in $\U_i$ satisfying $\psi^1(\alpha_{\U_i})=\alpha_{\sH}$. Then the $\tilde \Omega(\xi,\eta,(\U_i,\psi_i,\su_i))$ we want is a map
\[
\tilde \Omega(\xi,\eta,(\U_i,\psi_i,\su_i)): (\U_i\times_\sH\U_i)^0\rto A^1.
\]
We set
\begin{align*}
\tilde \Omega(\alpha_K,\beta_K,(\U_i,\psi_i,\su_i))(\xymatrix{a_1 \ar@{.>}[r]^-{\alpha_\sH} & a_2})
:=&\, \su_i(\alpha_{\U_i})\cdot \Omega(\xi,\eta,\su_i(a_2))\\
=&\, \Omega(\xi,\eta,\su_i(a_1))\cdot \Lambda_{\xi\eta}\inv\circ \Lambda_\eta\circ\Lambda_\xi\circ \su_i(\alpha_{\U_i}).
\end{align*}
The last equality follows from the fact that $\Omega(\xi,\eta):{\sf id}_\A\Rto \Lambda_{\xi\eta}\inv\circ \Lambda_\eta\circ\Lambda_\xi$ is a natural transformation. By direct computation we have
\begin{theorem}\label{T induced-extension-on-Mor}
The pair $(\tilde \Lambda,\tilde \Omega)$ is a generalized cocycle.
Therefore we get a groupoid fibration $\sf Mor(H,A)\rtimes_{\tilde \Lambda,\tilde \Omega}\K$ over $\K$ with fiber $\sf Mor(H,A)$.

Moreover, when the fundamental class $[\overline\sH]$ exists, $\tilde \Lambda$ preserves $\sf Mor(\sH,\A)_\beta$, therefore we could restrict $(\tilde \Lambda,\tilde \Omega)$ to $\sf Mor(\sH,\A)_\beta$. Denote the restriction by $(\tilde \Lambda_\beta,\tilde\Omega_\beta)$. Then we have a groupoid fibration $\sf Mor(\sH,\A)_\beta\rtimes_{\tilde \Lambda_\beta,\tilde\Omega_\beta}\K$ over $\K$ with fiber $\sf Mor(\sH,\A)_\beta$.
\end{theorem}

\subsection{Fiber class generalized morphisms to $\G$}\label{S subs-fiber-class-generalized-morphisms}

Obviously, every object $(x,(\U,\psi,\su))$ in the groupoid $\sf Mor(H,A)\rtimes_{\tilde \Lambda,\tilde \Omega}\K$ defines a generalized morphism
\[
\xymatrix{\sH & \U\ar[l]_-{\psi}\ar[r]^-\su & \A\ar@{^(->}[r]^-{\tau_x} &\G }
\]
via the inclusion $\tau_x$ of $\A$ as the fiber over $x\in K^0$ (or $(1_x\rrto x)$) of the kernel $\ker\phi$ of the topological product Lie groupoid fibration $\phi:\G\rto\K$. We denoted this induced generalized morphism by $(\U,\psi,\su)_x$. It is obvious that for every object $(x,(\U,\psi,\su))$ in the groupoid $\sf Mor(H,A)\rtimes_{\tilde \Lambda,\tilde \Omega}\K$, the induced generalized morphism $(\U,\psi,\su)_x$ into $\G$ is a fiber class generalized morphism. Moreover this procedure gives rise to a groupoid morphism
\[
\tau:=(\tau^0,\tau^1):{\sf Mor(H,A)}\rtimes_{\tilde \Lambda,\tilde \Omega} \K\rto {\sf Mor(H,G)}_F.
\]
The $\tau^1$ on arrows is given by
\[
\tau^1(\alpha,\xi)(\cdot)=(\alpha(\cdot),\xi):(\U\times_\sH\V)^0\rto G^1=A^1\times K^1,
\]
where $\alpha:(\U\times_\sH\V)^0\rto A^1$ is an arrow between two morphisms $(\U,\psi,\su),(\V,\phi,\sv)\in\mathrm{Mor}^0(\sH,\A)_F$.

Similarly, when the fundamental class $[\overline\sH]$ exists, we have
\[
\tau_\beta: {\sf Mor(H,A)_\beta }\rtimes_{\tilde \Lambda,\tilde \Omega}\K \rto {\sf Mor(H,G)}_\beta.
\]
Our main theorem in this section is
\begin{theorem}\label{T tau-is-equivalence}
Suppose that $\overline\sH$ is compact and connected, and $\K$ is \'etale. Then $\tau$ and $\tau_\beta$ are both equivalence groupoids.
\end{theorem}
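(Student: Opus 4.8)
The plan is to verify that $\tau$, and then $\tau_\beta$, meets the two conditions defining an equivalence of groupoids; for these morphism groupoids I read this as essential surjectivity together with full faithfulness. I would dispatch full faithfulness first, as it is largely bookkeeping, and reserve the geometric input for essential surjectivity.

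For full faithfulness, fix two objects $(x_1,(\U_1,\psi_1,\su_1))$ and $(x_2,(\U_2,\psi_2,\su_2))$ of ${\sf Mor(H,A)}\rtimes_{\tilde\Lambda,\tilde\Omega}\K$. By the description of arrows in a semidirect product (Theorem \ref{T S-Omega-def-G-SOmega}), a morphism between them is a pair $(\alpha_{\sf Mor},\xi)$ with $\xi\colon x_1\rto x_2$ in $K^1$ and $\alpha_{\sf Mor}$ an arrow of ${\sf Mor(H,A)}$ whose $\tilde\Lambda_\xi$-twist has target $(\U_2,\psi_2,\su_2)$. On the other side, an arrow in ${\sf Mor(H,G)}_F$ between $\tau(x_1,(\U_1,\psi_1,\su_1))$ and $\tau(x_2,(\U_2,\psi_2,\su_2))$ is a natural transformation valued in $G^1=A^1\times K^1$ over the fiber product $\U_1\times_\sH\U_2$. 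I would split such a natural transformation into its $K^1$- and $A^1$-components. Since both the source and target generalized morphisms have $K$-component constant at $x_1$ and $x_2$ respectively, the naturality square forces the $K^1$-component to be locally constant, hence---using that $\overline\sH$, and therefore the fiber product, is connected---a single arrow $\xi\colon x_1\rto x_2$; the $A^1$-component, read through the twisting relation \eqref{E 3.3} that defines $\tilde\Omega$, is then exactly an arrow of ${\sf Mor(H,A)}$. Checking that this assignment inverts $\tau^1$ is a direct computation with the explicit formulas for $\tilde\Lambda$ and $\tilde\Omega$, and yields the desired bijection on hom-sets.

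For essential surjectivity, take an arbitrary fiber class generalized morphism $(\V,\chi,\w)\colon\sH\rightharpoonup\G$. The fiber class condition says that $\overline\phi\circ\overline\w\circ\overline\chi\inv\colon\overline\sH\rto\overline\K$ is the constant map at a point $[x_0]$; writing $\w=(\w_A,\w_K)$ through $G^i=A^i\times K^i$, this means $\overline{\w_K}$ is constant. Using the local triviality of the extension (equivalently of $\ker\phi\rto K^0$) together with the compactness of $\overline\sH$, I would pass to a refinement $\psi\colon\U\rto\sH$ over which there is a smooth choice of arrows $\theta(v)\colon w^0_K(v)\rto x_0$ in $K^1$: such arrows exist because $w^0_K$ lands in the orbit of $x_0$, and compactness lets me assemble the local choices over a finite subcover. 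Conjugating $\w$ by the $G^1$-valued natural transformation $\sigma(v)=(1_{w^0_A(v)},\theta(v))$ should move $\w$ to a morphism landing in the fiber over $x_0$, hence of the form $\tau_{x_0}\circ\su$ for a strict morphism $\su\colon\U\rto\A$; this would exhibit $(\V,\chi,\w)$ as isomorphic in ${\sf Mor(H,G)}_F$ to $\tau\big(x_0,(\U,\psi,\su)\big)$.

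The hard part is exactly this last step. Conjugating by $\sigma$ a priori only lands the morphism in the restricted extension $\A\rtimes_{\Lambda,\Omega}(\Gamma_{x_0}\rrto x_0)$ sitting over the isotropy group of $x_0$, and one must show that the residual $K$-component---the isotropy monodromy of $\w_K$ along $\sH$---can be trivialized smoothly and globally so that the image genuinely factors through the fiber $\tau_{x_0}(\A)$. This is where I expect the real work to lie: the local triviality of $\phi$ supplies the fiberwise identifications encoded by $\Lambda$, while the compactness of $\overline\sH$ is what allows the local data $\theta$ to be patched, after a finite refinement, into a single $\sigma$ controlling this monodromy. Once essential surjectivity and full faithfulness are established, the statement for $\tau_\beta$ follows by running the same argument inside the degree-$\beta$ subgroupoids: the fiber inclusion $\tau_{x_0}$ carries the class $\beta\in H_*(\overline\A)$ to the corresponding fiber class in $H_*(\overline\G)$, so $\tau$ preserves the degree and restricts to an equivalence ${\sf Mor(H,A)}_\beta\rtimes_{\tilde\Lambda_\beta,\tilde\Omega_\beta}\K\rto{\sf Mor(H,G)}_\beta$.
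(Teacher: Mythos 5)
Your essential-surjectivity plan is, up to its last step, the paper's own argument: the paper likewise observes that each connected component $U^0_i$ of $U^0$ is mapped into the fiber over some point $x_i\in K^0$, uses compactness of $\overline\sH$ to cut down to finitely many components, chooses arrows $\xi_i\colon x_i\rto x_1$ in $K^1$ (one constant arrow per component, where you propose a smoothly varying family $\theta$), and conjugates $\su$ by the natural transformation $\bigsqcup_i(1_{u^0(\cdot)},\xi_i)$. But the step you explicitly defer --- showing that after this conjugation the residual arrow-level $K$-components, which a priori lie only in the isotropy group $\Gamma_{x_1}$, are actually units, so that the new morphism factors through $\tau_{x_1}(\A)$ rather than merely through $\A\rtimes_{\Lambda,\Omega}(\Gamma_{x_1}\rrto x_1)$ --- is a genuine gap, not a technicality: conjugating by object-level arrows $(1_{u^0(\cdot)},\theta(\cdot))$ changes the isotropy monodromy only within its conjugacy class, so no choice of $\theta$ can trivialize it in general. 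Concretely, take $\A$ the one-point trivial groupoid (so $\G\cong\K$, with $\Lambda$, $\Omega$ trivial) and $\sH=\K=\B(\integer/2)$: the identity is a fiber class generalized morphism, since its coarse image is a point, yet it is not isomorphic in ${\sf Mor(H,G)}$ to any morphism sending all arrows to units, because an arrow of the morphism groupoid acts on the induced homomorphism $\integer/2\rto\integer/2$ by conjugation, which is trivial in an abelian group. You have therefore put your finger precisely on the spot where the paper's proof is thinnest --- the paper simply asserts that if all $x_i$ equal some $x$ then $(\U,\psi,\su)\in\mathrm{Im}\,\tau^0$, and after the conjugation declares the proof finished, without ever addressing the isotropy monodromy --- but flagging the hard step without an argument leaves your proposal incomplete, and the example indicates it cannot be completed from the stated hypotheses alone.

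On full faithfulness your sketch has two independent problems. First, you invoke that ``$\overline\sH$, and therefore the fiber product, is connected,'' but the theorem assumes only that $\overline\sH$ is \emph{compact}; for disconnected $\sH$ a natural transformation between two image morphisms may take distinct values $\xi\neq\xi'$ in $K^1$ on different components and then lies over no single pair $(\alpha_{\sf Mor},\xi)$. Second, even for connected $\overline\sH$, naturality only forces the $K^1$-component $\sigma_K$ to be constant along arrows of $\U_1\times_\sH\U_2$, hence to descend to a continuous map from $\overline\sH$ to the space of arrows $x_1\rto x_2$ in $K^1$; this is locally constant only when $\K$ is \'etale, not for a general Lie groupoid. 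For comparison, the paper does not attempt this half at all: its proof consists solely of the statement that it suffices to connect every fiber class generalized morphism to an object of $\mathrm{Im}\,\tau^0$ by an arrow of $\mathrm{Mor}^1(\sH,\G)_F$, i.e.\ the essential-surjectivity part. So you are more ambitious than the paper here, but the extra step as written relies on an unavailable hypothesis. Your reduction of $\tau_\beta$ to $\tau$ agrees with the paper, which treats $\tau_\beta$ as ``similar,'' and is fine modulo all of the above.
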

\begin{proof}
We prove this theorem for $\tau$. The proof for $\tau_\beta$ is similar. Here the equivalence is the equivalence between categories. So the proof consists of two steps. In the first step we show that $\tau$ is essentially surjective. This is equivalent to that every fiber class generalized morphism $(\U,\psi,\su):\sH\rightharpoonup \G$ is connected to a generalized morphism in $\tau^0(({\sf Mor(H,A)}\rtimes\K)^0)$ by an arrow in $\mathrm{Mor}^1(\sH,\G)_F$. In the second step we show that $\tau$ is full and faithful.

\v
\n{\bf Step I.} Let $(\U,\psi,\su):\sH\rightharpoonup \G$ be a fiber class generalized morphism. Suppose $\psi=(\psi^0,\psi^1),\su=(u^0,u^1)$, $\U=(U^1\rrto U^0)$ and
\begin{align}\label{E decompose-U-0}
U^0=\bigsqcup_{i\in I} U^0_i
\end{align}
be a decomposition of $U^0$ into open, connected components. Since $\overline\sH$ is compact, we could first modify $(\U,\psi,\su)$ into a new generalized morphism $(\V,\varphi,\sv):\sH\rightharpoonup \G$ such that $V^0$ has only finite connected components. This can be done as follows. The decomposition \eqref{E decompose-U-0} of $U^0$ induces an open cover $\{\overline{U^0_i}\}_{i\in I}$ of $\overline\sH$. Since $\overline\sH$ is compact. The open cover $\{\overline{U^0_i}\}_{i\in I}$ has a finite cover, say $\{\overline{U^0_{i_k}}\}_{1\leq k\leq n}$ for an $n\in\integer_{\geq 1}$. Let
\[
V^0:=\bigsqcup_{1\leq k\leq n} U^0_{i_k}
\]
Then we take $\V=\U|_{V^0}$, and $\varphi=\psi|_{\V}, \sv=\su|_\V$. Since $(\U,\psi,\su)$ is a fiber class generalized morphism, so is $(\V,\varphi,\sv)$. On the other hand, the natural inclusion $i^0:V^0\hrto U^0$ induces an equivalence $\sf i:\V\hrto \U$.
Then it gives rise to an arrow in $\mathrm{Mor}^1(\sH,\G)_F$ which connects $(\U,\psi,\su)$ and $(\V,\varphi,\sv)$. So in the following we always assume that for the original $(\U,\psi,\su)$, $U^0$ has finite connected components and $I=\{1,\ldots, n\}$ for some $n\in\integer_{\geq 1}$.

Since $(\U,\psi,\su)$ is a fiber class generalized morphism, $\overline\phi \circ \overline\su (\overline\U )=\{pt\}\in  \overline\K $. Therefore each connected component $U_i^0$ is mapped into a fiber of $G^0\rto K^0$. Suppose $\psi^0(u^0(U^0_i))=x_i\in K^0$.
If all $x_i=x\in K^0$, then $(\U,\psi,\su)\in \text{Im}\,(\tau^0)$. So we next assume that these $x_i$ are not all the same.

Now we modify $\su$ to a new morphism $\w=(w^0,w^1):\U\rto \G$ such that $\phi^0(w^0(U^0))=x_1$. We denote by
$
U^1[U^0_i,U^0_j]
$
the space of arrows in $U^1$ that start from $U^0_i$ and end at $U^0_j$.

For every $x_i, 1\leq i\leq n$, take an arrow $\xi_i:x_i\rto x_1$ in $K^1$.\footnote{Such an arrow exists, since $\overline{\phi}(x_i)=\{pt\}\in\overline{\K}$.} In particular, for every $i$ with $x_i=x_1$ we require that $\xi_i=1_{x_i}=1_{x_1}$. Then we set
\[
w^0(\cdot):=
\Lambda_{\xi_i} \circ u^0(\cdot)=t(1_{u^0(\cdot)},\xi_i) 
\qq \mathrm{on}\qq U^0_i,
\]
and
\[
w^1(\cdot):=(1_{u^0(s(\cdot))},\xi_i)\inv \cdot u^1(\cdot)\cdot (1_{u^0(t(\cdot))},\xi_j)\qq
\mathrm{on}\qq  U^1[U^0_i,U^0_j].
\]
Then one can see that the natural transformation from $\su$ to $\w$ is given by
\[
\bigsqcup_{i\in I} (1_{u^0(\cdot)},\xi_i):\bigsqcup_{i\in I}U^0_i\rto G^1.
\]

\n{\bf Step II.} Note that $\tau$ is injective on objects, hence
we only have to prove that
\[
{\sf Mor(H,A)}_F\rtimes_{\tilde\Lambda,\tilde\Omega}\K \stackrel{\tau}{\cong}
{\sf Mor(H,G)}_F\big|_{\tau ({\mathrm{Mor}^0(\sf H,A)}_F\times K^0)}.
\]
Obviously, $\tau$ is injective on arrows. We next construct an inverse of $\tau$. Take two morphisms $(\U,\psi,\su),(\V,\phi,\sv)\in\mathrm{Mor}^0(\A,\sH)_F$ and two points $x,y\in K^0$. Then we get two morphisms $(\U,\psi,\su)_x,(\V,\phi,\sv)_y\in\mathrm{Mor}^0(\sH,\G)_F$. We denote them by $(\U,\psi,\su_x)$ and $(\V,\phi,\sv_y)$ respectively. Suppose that there is an arrow $\tilde\alpha:(\U,\psi,\su)_x\Rto(\V,\phi,\sv)_y$ in $\mathrm{Mor}(\sH,\G)^1_F$. We next find the pre-image of $\tilde\alpha$ under $\tau$.

First by definition $\tilde\alpha:(\U\times_\sH\V)^0\rto G^1=A^1\times K^1$. From the fact that $u^1_x(a)=(u^1(a),1_x)$ and $v^1_y(b)=(v^1(b),1_y)$ on arrows, we see that $\tilde\alpha$ induce a well defined map
$
\overline{\tilde\alpha}:\overline{\U\times_\sH\V}\rto K^1.
$
Moreover, the image is $K^1(x,y)$. Then since $\overline{\sH}$ is connected and $\K$ is \'etale, we see that $\overline{\tilde\alpha}$ is constant. Suppose its image is $\xi$. Then the image of $\tilde\alpha$ is of the form $(\cdot,\xi)$. So we get a map
\[
\alpha:(\U\times_\sH\V)^0\rto A^0
\qq
\mathrm{by}\qq
\tilde\alpha(\cdot)=(\alpha(\cdot),\xi).
\]
Then from the fact that $\tilde\alpha$ is an arrow between $(\U,\psi,\su)_x$ and $(\V,\phi,\sv)_y$, and $\Omega(1_x,\cdot,a)=\Omega(\cdot, 1_y,a)=1_a$ for every $a\in A^0$, we see that $\alpha$ is an arrow between $(\U,\psi,\su)$ and $(\V,\phi,\sv)$. This gives rise to the inverse of $\tau$ on arrows. So $\tau:{\sf Mor(H,A)}_F\rtimes_{\tilde\Lambda,\tilde\Omega}\K \cong
{\sf Mor(H,G)}_F\big|_{\tau ({\mathrm{Mor}^0(\sf H,A)}_F\times K^0)}$. This finishes the proof.
\end{proof}

By using the axiom of choice we could removed the assumption on the compactness of $\overline\sH$.

\begin{remark}
In \cite{Chen-Du-WangR2019}, the authors also constructed another morphism groupoid $\sf FMor(H,A)$ by using {\em full-morphism} (cf. \cite[Definition 3.8]{Chen-Du-WangR2019}) and {\em strict fiber products} (cf. \cite[Definition 2.10]{Chen-Du-WangR2019}). Moreover, there is a natural equivalence
$
\sf i:\sf FMor(H,A)\rto \sf Mor(H,A)
$ 
(cf. \cite[Theorem 3.15]{Chen-Du-WangR2019}).
Similar as the constructions in Subsection \ref{S generalized-morphism-to-A}, we have
$
{\sf FMor(H,A)}_F$, and $\sf FMor(H,A)_\beta.
$
Moreover, $\sf i$ restricts to groupoid equivalences
\[
{\sf i}_F:{\sf FMor(H,A)}_F\rto {\sf Mor(H,A)}_F,
\qq
\text{and}\qq
{\sf i}_\beta:{\sf FMor(H,A)}_\beta\rto {\sf Mor(H,A)}_\beta.
\]
On the other hand, there are also induced groupoid fibrations over $\K$ with fibers being ${\sf FMor(H,A)}_F$ and ${\sf FMor(H,A)}_\beta$ respectively, i.e. similar results as Theorem \ref{T induced-extension-on-Mor} hold for ${\sf FMor(H,A)}_F$ and $\sf FMor(H,A)_\beta$. So we have
$
{\sf FMor(H,A)}_F\rtimes_{\tilde \Lambda,\tilde \Omega}\K$,
and $\sf FMor(H,A)_\beta\rtimes_{\tilde \Lambda,\tilde \Omega}\K.
$
Moreover, one can see that ${\sf i}_F$ and ${\sf i}_\beta$ give rise to groupoid equivalences
\[
{\sf FMor(H,A)}_F\rtimes_{\tilde \Lambda,\tilde \Omega}\K\rto
{\sf Mor(H,A)}_F\rtimes_{\tilde \Lambda,\tilde \Omega}\K, \qq
\text{and}
\qq \sf FMor(H,A)_\beta\rtimes_{\tilde \Lambda,\tilde \Omega}\K\rto
\sf Mor(H,A)_\beta\rtimes_{\tilde \Lambda,\tilde \Omega}\K.
\]
Finally, one can also see that Theorem \ref{T tau-is-equivalence} holds for both ${\sf FMor(H,A)}_F\rtimes_{\tilde \Lambda,\tilde \Omega}\K$ and $\sf FMor(H,A)_\beta\rtimes_{\tilde \Lambda,\tilde \Omega}\K$. We leave the details to the readers.
\end{remark}

\appendix

\section{Groupoid cohomology and the cocycle $\Xi$}

\subsection{Groupoid cohomology with coefficient in an abelian group}\label{S appendix-groupoid-cohomology}
We fix some notation of groupoid cohomology. Let $\G=(G^1\rrto G^0)$ be a Lie groupoid. Let $E$ be an abelian (Lie) group. A $\G$-action on $E$ means a (smooth) map $\bar\Lambda:G^1\rto\mathrm{Aut}(E)$ satisfying that for every pair of composable arrows $g,h$ in $G^1$, i.e. $t(g)=s(h)$, it holds
\begin{align}\label{E condition-on-groupoid-action}
\bar\Lambda(h)\circ \bar\Lambda(g)=\bar\Lambda(gh).
\end{align}
Recall that the composition of two arrows $gh$ goes from left to right.

Consider the notation $G^{[n]}$ for the set of $n$-composable
arrows\footnote{As we have remark in the definition of multiplication of arrows of groupoids in the beginning of \S \ref{S 2}, the convention is different from the usual one, in which $s(g_i)=t(g_{i+1})$, that is the arrows go from the right to the left. However, we emphasize that it goes from left to right. So in the following, the definition of natural maps from $G^{[n]}$ to $G^0$ will be different from the usual one (see for example \cite{Crainic2003,Bos2013}), so is the definition of coboundary map.}
\begin{align}\label{E appendix-G-[n]}
G^{[n]}:=\{\vec g=(g_1,\ldots,g_n)\mid t(g_i)=s(g_{i+1}), 1\leq i\leq n-1\}.
\end{align}
When $n=0$, $G^{[0]}=G^0$ is the space of objects. We could illustrate a point $\vec g=(g_1,\ldots,g_n)\in G^{[n]}$ by
\[
\xymatrix{x=s(g_1)\ar[r]^-{g_1} & t(g_1)=s(g_2)\ar[r]^-{g_1} & \cdots \ar[r]^{g_n} & t(g_n). }
\]

Now let $E$ be an abelian (Lie) group equipped with a $\G$-action. For $n\in\integer_{\geq 0}$ an {\bf $n$-cochain} on $G$ with values in $E$ is a (smooth) map $G^{[n]}\rto E$. Denote the set of $n$-cochain on $G$ with values in $E$ by $C^n(\G,E)$. The coboundary map
$
d:C^n(\G,E)\rto C^{n+1}(\G,E)
$
that makes the the family $C^*(\G,E)$ into a cochain complex is defined by
\begin{align*}
dc(g_1,\ldots,g_{n+1}):&=\bar\Lambda(g_1)\inv(c(g_2,\ldots,g_{n+1}))
+ \sum_{i=1}^n (-1)^i c(g_1,\ldots,g_ig_{i+1},\ldots, g_{n+1})%
\\
&
+ (-1)^{n+1}c(g_1,\ldots, g_n)
\end{align*}
for $n\geq 1$, with
$
dc(g)=\bar\Lambda(g)\inv (c(t(g)))-c(s(g)).
$
Here we write the operation in $E$ by ``$+$''. Then one can check that $dd=0$ on $C^n(\G,E)$ for all $n\in\integer_{\geq 0}$. The associated {\em groupoid cohomology group} is
\[
H^n_{\bar \Lambda}(\G,E):=\frac{\ker \{d:C^n(\G,E)\rto C^{n+1}(\G,E)\}}{\mathrm{Im}\{d:C^n(\G,E)\rto C^{n+1}(\G,E)\}}
\]
for $n\geq 1$ and $H^0_{\bar \Lambda}(\G,E)=\ker \{d:C^0(\G,E)\rto C^1(\G,E)\}$.

\subsection{Induced action of $\K$ on $Z_\A$ for a topological product Lie groupoid fibration by $\A$}\label{S appdndix-induced-K-action-on-ZA}

Let $\bar \Lambda:K^1\rto \overline{\sf SAut(A)}$ be a morphism with smooth liftings. Then there is an induced $\K$-action on the group $Z_\A$. We next describe this action. We first take a smooth lifting $\Lambda:K^1\rto\mathrm{SAut}^0(\A)$. So it assigns an arrow $\xi:x\rto y$ in $K^1$ an isomorphism $\Lambda_\xi:\A\rto\A$. For every $\sigma\in Z_\A$, we define $\Lambda_\xi(\sigma)$ by
\begin{align}\label{E def-S-action-on-KA}
\Lambda_\xi(\sigma)(a):=\Lambda_\xi\circ \sigma\circ \Lambda_\xi\inv(a)\stackrel{\text{abbreviation}}{=}\Lambda_\xi\sigma \Lambda_\xi\inv(a).
\end{align}

\begin{lemma}
The formula \eqref{E def-S-action-on-KA} defines a $\K$-action on $Z_\A$.
\end{lemma}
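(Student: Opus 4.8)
The plan is to verify directly the two requirements for \eqref{E def-S-action-on-KA} to be a $\K$-action in the sense of \S\ref{S appendix-groupoid-cohomology}: that for each $\xi\in K^1$ the assignment $\sigma\mapsto\Lambda_\xi(\sigma)$ is a group automorphism of $Z_\A$, and that these automorphisms obey the composition law \eqref{E condition-on-groupoid-action}, i.e. $\bar\Lambda(\eta)\circ\bar\Lambda(\xi)=\bar\Lambda(\xi\eta)$ for every composable pair $\xi,\eta$. Throughout I regard $\sigma\in Z_\A$ as a natural transformation $\sigma:\sf id_\A\Rightarrow\sf id_\A$, so that $\sigma(a)\in Z\Gamma_a$ for all $a\in A^0$ and the naturality square reads $\sigma(s(g))\cdot g=g\cdot\sigma(t(g))$ for every arrow $g\in A^1$ (cf. item (3) of Proposition \ref{P SAut}); smoothness of $\xi\mapsto\Lambda_\xi(\sigma)$ is immediate from smoothness of the lifting $\Lambda$ and of the structure maps of $\A$.

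First I would check that $\Lambda_\xi(\sigma)$ lies again in $Z_\A$. As $\Lambda_\xi$ is a strict automorphism, its arrow map restricts to a group isomorphism $\Gamma_{\Lambda_\xi\inv(a)}\to\Gamma_a$ carrying centers to centers, so $\Lambda_\xi(\sigma)(a)=\Lambda_\xi(\sigma(\Lambda_\xi\inv(a)))\in Z\Gamma_a$. Naturality of $\Lambda_\xi(\sigma)$ is obtained by applying the functor $\Lambda_\xi$ to the naturality square of $\sigma$ at the arrow $\Lambda_\xi\inv(g)$ and using $\Lambda_\xi\circ\Lambda_\xi\inv=\sf id_\A$. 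That $\sigma\mapsto\Lambda_\xi(\sigma)$ is a homomorphism for the group law $\odot$ on $Z_\A$ (given pointwise by \eqref{E def-odot}) follows because $\Lambda_\xi$ preserves the composition of the loops $\sigma(\Lambda_\xi\inv(a))$ and $\tau(\Lambda_\xi\inv(a))$; writing $c_\f$ for the operator $\sigma\mapsto\f\circ\sigma\circ\f\inv$ on $Z_\A$, its inverse is $c_{\Lambda_\xi\inv}$ since $c_{\Lambda_\xi}\circ c_{\Lambda_\xi\inv}=c_{\sf id_\A}=\mathrm{id}$. Hence each $c_{\Lambda_\xi}\in\mathrm{Aut}(Z_\A)$.

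The main step is the composition law, and in this notation \eqref{E def-S-action-on-KA} is the assignment $\xi\mapsto c_{\Lambda_\xi}$. Functoriality of conjugation gives $c_{\Lambda_\eta}\circ c_{\Lambda_\xi}=c_{\Lambda_\eta\circ\Lambda_\xi}$. Setting $\theta:=\Lambda_{\xi\eta}\inv\circ\Lambda_\eta\circ\Lambda_\xi$ we have $\Lambda_\eta\circ\Lambda_\xi=\Lambda_{\xi\eta}\circ\theta$, hence $c_{\Lambda_\eta\circ\Lambda_\xi}=c_{\Lambda_{\xi\eta}}\circ c_\theta$, and the whole law reduces to $c_\theta=\mathrm{id}$ on $Z_\A$. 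This is the heart of the matter: by \eqref{E 3.3} the inner automorphism $\theta$ is conjugation by $\Omega(\xi,\eta,\cdot)$, and since $\Omega(\xi,\eta,\cdot)=\Phi_{\xi,\eta}$ is a natural transformation $\sf id_\A\Rightarrow\theta$, the arrow $\Omega(\xi,\eta,b)$ runs from $b$ to $\theta(b)$. For $\sigma\in Z_\A$ and $b:=\theta\inv(a)$ (so $\theta(b)=a$), applying \eqref{E 3.3} to the loop $\sigma(b)\in\Gamma_b$ gives
\[
c_\theta(\sigma)(a)=\theta\big(\sigma(b)\big)=\Omega(\xi,\eta,b)\inv\cdot\sigma(b)\cdot\Omega(\xi,\eta,b),
\]
and the naturality square of $\sigma$ at the arrow $\Omega(\xi,\eta,b)\colon b\to a$ rewrites the right-hand side as $\Omega(\xi,\eta,b)\inv\cdot\Omega(\xi,\eta,b)\cdot\sigma(a)=\sigma(a)$. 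Thus $c_\theta=\mathrm{id}$, whence $c_{\Lambda_\eta}\circ c_{\Lambda_\xi}=c_{\Lambda_{\xi\eta}}$, which is \eqref{E condition-on-groupoid-action}; and $\Lambda_{1_x}=\sf id_\A$ makes units act trivially.

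I expect the only genuine obstacle to be this last cancellation, i.e. recognizing that the failure of the lift $\Lambda$ to be a morphism is exactly an inner automorphism (recorded by $\Omega$ via \eqref{E 3.3}), and that inner automorphisms act trivially on $Z_\A$ precisely because its elements are natural transformations $\sf id_\A\Rightarrow\sf id_\A$. As a byproduct, the same triviality shows the resulting action is independent of the chosen smooth lifting $\Lambda$ of $\bar\Lambda$, since two such lifts differ arrow-by-arrow by an element of $N_\A$, hence by an inner automorphism.
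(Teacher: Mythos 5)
Your proof is correct and takes essentially the same route as the paper: both arguments reduce to the fact that the defect $\theta=\Lambda_{\xi\eta}\inv\Lambda_\eta\Lambda_\xi$ is linked to ${\sf id}_\A$ by the natural transformation $\Omega(\xi,\eta)$, so that \eqref{E 3.3} (equivalently, the paper's commutative naturality square) combined with the $\A$-invariance of $\sigma$ makes conjugation by $\theta$ trivial on $Z_\A$ --- your cancellation at the arrow $\Omega(\xi,\eta,b)\colon b\rto a$ is, after relabeling the base object, exactly the paper's computation. The only differences are organizational and harmless: you package the key step as ``$c_\theta=\mathrm{id}$ on $Z_\A$'' rather than computing $\Lambda_\eta(\Lambda_\xi(\sigma))=\Lambda_{\xi\eta}(\sigma)$ directly, and you additionally verify the routine points the paper leaves implicit (that $\Lambda_\xi(\sigma)$ indeed lies in $Z_\A$, that each $c_{\Lambda_\xi}$ is a group automorphism for $\odot$, and that units act trivially), with your closing remark on lifting-independence anticipating the paper's subsequent lemma.
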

\begin{proof}
As in \eqref{E def-Omega-2} for every pair of composable arrows $\xi,\eta$ in $K^1$, we have the natural transformation $\Omega(\xi,\eta):{\sf id_A}\Rto \Lambda_{\xi\eta}\inv\Lambda_\eta \Lambda_\xi$. We will show that the $\A$-invariance of $\sigma\in Z_\A$ ensure that
$
\Lambda_\eta(\Lambda_\xi(\sigma))=\Lambda_{\xi\eta}(\sigma).
$

Take an element $a\in A^0$. Then
\begin{align*}
&\Lambda_\eta(\Lambda_\xi(\sigma))(a)
=\Lambda_\eta \Lambda_\xi(\sigma)(\Lambda_\eta\inv (a))
=\Lambda_\eta \Lambda_\xi \sigma \Lambda_\xi\inv \Lambda_\eta\inv(a),\qq
\mathrm{and}\qq \Lambda_{\xi\eta}(\sigma)(a)=
\Lambda_{\xi\eta}\sigma\Lambda_{\xi\eta}\inv (a).
\end{align*}
Since $\Omega(\xi,\eta):{\sf id_A}\Rto \Lambda_{\xi\eta}\inv \Lambda_\eta\Lambda_\xi$ is a natural transformation we have a commutative diagram
\[
\xymatrix{
\Lambda_\xi\inv \Lambda_\eta\inv(a) \ar[rrr]^-{\Omega(\xi,\eta,\Lambda_\xi\inv\Lambda_\eta\inv(a))}
\ar[d]_-{\sigma(\Lambda_\xi\inv\Lambda_\eta\inv(a))} & &&
\Lambda_{\xi\eta}\inv(a)
\ar[d]^-{\Lambda_{\xi\eta}\inv \Lambda_\eta\Lambda_\xi(\sigma(\Lambda_\xi\inv \Lambda_\eta\inv(a)))}\\
\Lambda_\xi\inv\Lambda_\eta\inv(a) \ar[rrr]^-{\Omega(\xi,\eta,\Lambda_\xi\inv\Lambda_\eta\inv(a))} && &
\Lambda_{\xi\eta}\inv(a).}
\]
The $\A$-invariance of $\sigma$ implies that for every arrow $h:\Lambda_\xi\inv \Lambda_\eta\inv(a) \rto \Lambda_{\xi\eta}\inv(a)$ in $\A$, we have
\[
h\inv \cdot \sigma (\Lambda_\xi\inv \Lambda_\eta\inv(a))\cdot h=\sigma(\Lambda_{\xi\eta}\inv(a)).
\]
Therefore for the arrow $\Omega(\xi,\eta,\Lambda_\xi\inv \Lambda_\eta\inv(a)):\Lambda_\xi\inv\Lambda_\eta\inv(a) \rto \Lambda_{\xi\eta}\inv(a)$, the commutative diagram above implies
\begin{align*}
\Lambda_{\xi\eta}\inv \Lambda_\eta\Lambda_\xi(\sigma \Lambda_{\xi}\inv \Lambda_\eta\inv(a))
&=\Omega(\xi,\eta,\Lambda_\xi\inv \Lambda_\eta\inv(a))\inv \cdot\sigma(\Lambda_\xi\inv\Lambda_\eta\inv(a))\cdot \Omega(\xi,\eta,\Lambda_\xi\inv\Lambda_\eta\inv(a))
=\sigma(\Lambda_{\xi\eta}\inv(a)).
\end{align*}
So we have
$
\Lambda_\eta \Lambda_\xi\sigma \Lambda_\xi\inv \Lambda_\eta\inv(a)=\Lambda_{\xi\eta}\sigma \Lambda_{\xi\eta}\inv (a).
$
Consequently $\Lambda_\eta(\Lambda_\xi(\sigma))=\Lambda_{\xi\eta}(\sigma)$. This shows that $\Lambda$ induces a $\K$-action on $Z_\A$.
\end{proof}

Moreover,
\begin{lemma}
The action \eqref{E def-S-action-on-KA} does not depend on the smooth lifting $\Lambda$ of $\bar \Lambda$, but only on $\bar \Lambda$ itself.
\end{lemma}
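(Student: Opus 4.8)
The plan is to reduce the independence statement to a pointwise assertion in $\xi\in K^1$ and then to exploit the $\A$-invariance that is built into the definition of $Z_\A$. So let $\Lambda,\Lambda':K^1\rto\mathrm{SAut}^0(\A)$ be two smooth liftings of the same band $\bar\Lambda$, so that $\pi(\Lambda_\xi)=\bar\Lambda(\xi)=\pi(\Lambda_\xi')$ for every $\xi$. Because $\overline{\sf{SAut}(\A)}$ is precisely the coarse space $\mathrm{SAut}^0(\A)/\mathrm{Im}\,t_{saut}$ of the groupoid $\sf SAut(A)$ (cf. \eqref{E SES-2}), the equality $\pi(\Lambda_\xi)=\pi(\Lambda_\xi')$ says exactly that $\Lambda_\xi$ and $\Lambda_\xi'$ are isomorphic objects of $\sf SAut(A)$; hence there is a natural transformation $\rho_\xi:\Lambda_\xi\Rto\Lambda_\xi'$. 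It therefore suffices to establish the following pointwise claim: whenever $\f,\g\in\mathrm{SAut}^0(\A)$ admit a natural transformation $\rho:\f\Rto\g$, one has $\f\sigma\f\inv=\g\sigma\g\inv$ in $Z_\A$ for every $\sigma\in Z_\A$.

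To prove the claim I would first record two facts. A section $\sigma\in Z_\A$ of $\sf p:\Z\A\rto\A$ is, by the target-compatibility of the functor $\sigma$, characterized by the $\A$-invariance
\[
h\inv\cdot\sigma(x)\cdot h=\sigma(y)\qquad\text{for every arrow }h:x\rto y\text{ in }\A;
\]
and, since $\g$ is an automorphism, a short check shows that $\g\sigma\g\inv$ is again such an $\A$-invariant section. Now fix $a\in A^0$ and put $b:=\f\inv(a)$, so that $\f(\sigma(b))=(\f\sigma\f\inv)(a)$ and $\rho(b):a\rto\g(b)$. Feeding the isotropy arrow $\sigma(b):b\rto b$ into the naturality square \eqref{E commut-diag-natural-trans} of $\rho$ gives $\rho(b)\cdot\g(\sigma(b))=\f(\sigma(b))\cdot\rho(b)$, i.e.
\[
(\f\sigma\f\inv)(a)=\rho(b)\cdot\g(\sigma(b))\cdot\rho(b)\inv.
\]
On the other hand $\g(\sigma(b))=(\g\sigma\g\inv)(\g(b))$ is the value at $\g(b)$ of the section $\g\sigma\g\inv$, so applying its $\A$-invariance to the arrow $\rho(b):a\rto\g(b)$ gives $\g(\sigma(b))=\rho(b)\inv\cdot(\g\sigma\g\inv)(a)\cdot\rho(b)$. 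Substituting this into the previous display, the two copies of $\rho(b)$ cancel and I obtain $(\f\sigma\f\inv)(a)=(\g\sigma\g\inv)(a)$; as $a\in A^0$ is arbitrary, $\f\sigma\f\inv=\g\sigma\g\inv$. Taking $\f=\Lambda_\xi$, $\g=\Lambda_\xi'$ for each $\xi$ then proves the lemma.

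The one genuinely delicate point — the step I would flag as the heart of the argument — is the cancellation in the last two displays: naturality of $\rho$ transports $\g(\sigma(b))$ into the $\f$-frame by conjugation with $\rho(b)$, while centrality (the $\A$-invariance of $\g\sigma\g\inv$) transports it back by conjugation with the very same arrow $\rho(b)$, and the two conjugations annihilate each other. Conceptually this is nothing but the assertion that an \emph{inner} natural transformation acts trivially on the center $Z_\A$ — the groupoid analogue of the classical fact that inner automorphisms fix the center of a group. Everything else, namely the production of $\rho_\xi$ from the coarse-space description of $\overline{\sf{SAut}(\A)}$ and the verification that $\g\sigma\g\inv$ is again $\A$-invariant, is routine and I would not expect it to present any difficulty.
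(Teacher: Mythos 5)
Your proof is correct and takes essentially the same route as the paper's: both produce natural transformations $\rho_\xi:\Lambda_\xi\Rightarrow\Lambda'_\xi$ from the equality of bands and then cancel two conjugations by $\rho_\xi(\cdot)$, one supplied by the naturality square \eqref{E commut-diag-natural-trans} and the other by the $\A$-invariance $h\inv\cdot\sigma(x)\cdot h=\sigma(y)$ characterizing sections in $Z_\A$. The only (cosmetic) difference is the order of the two steps: the paper applies invariance to $\sigma$ itself along the arrow $\Lambda_\xi\inv[\rho_\xi(a_2)]$ and then naturality at $\sigma(a_2)$, while you apply naturality at $\sigma(a_1)$ and then invariance of the conjugated section $\g\sigma\g\inv$ along $\rho(a_1)$.
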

\begin{proof}
Suppose both $\Lambda_1$ and $\Lambda_2$ are liftings of $\bar \Lambda$. Then there are natural transformations $\rho_\xi:\Lambda_{1,\xi}\Rto \Lambda_{2,\xi}$ for all $\xi\in K^1$. Take an $a\in A^0$. Let $a_i=\Lambda_i\inv(a)$ for $i=1,2$. By the definition \eqref{E def-S-action-on-KA}, we have
\[
\Lambda_{i,\xi}(\sigma)(a)=\Lambda_{i,\xi}\sigma(a_i).
\]
We next show that $\Lambda_{1,\xi}\sigma(a_1))=\Lambda_{2,\xi}\sigma(a_2)$.

First of all, by the natural transformation $\rho_\xi:\Lambda_{1,\xi}\Rto\Lambda_{2,\xi}$ we get an arrow
\[
\rho_\xi(a_2):\Lambda_{1,\xi}(a_2)\rto\Lambda_{2,\xi}(a_2) =\Lambda_{1,\xi}(a_1)=a,
\]
which induces an arrow $\Lambda_{1,\xi}\inv[\rho_\xi(a_2)]:a_2\rto a_1$. Therefore by the $\A$-invariance of $\sigma$ we get
\[
\sigma(a_1)=\Lambda_{1,\xi}\inv[\rho_\xi(a_2)]\inv\cdot \sigma(a_2) \cdot \Lambda_{1,\xi}\inv[\rho_\xi(a_2)].
\]
Applying $\Lambda_{1,\xi}$ to both sides we get
\[
\Lambda_{1,\xi}(\sigma(a_1))=\rho_\xi(a_2)\inv \cdot \Lambda_{1,\xi}(\sigma(a_2))\cdot \rho_\xi(a_2).
\]

Again, since $\rho_\xi :\Lambda_{1,\xi}\Rto \Lambda_{2,\xi}$, for the arrow $\sigma(a_2):a_2\rto a_2$ we have a commutative diagram in $A^1$
\[
\xymatrix{
\Lambda_{1,\xi}(a_2) \ar[d]_-{\Lambda_{1,\xi}(\sigma(a_2))} \ar[rr]^-{\rho_\xi(a_2)} &&
\Lambda_{2,\xi}(a_2) \ar[d]^-{\Lambda_{2,\xi}(\sigma(a_2))}\\
\Lambda_{1,\xi}(a_2)  \ar[rr]^-{\rho_\xi(a_2)} &&
\Lambda_{2,\xi}(a_2) .}
\]
Therefore
\[
\Lambda_{1,\xi}(\sigma(a_1))=\rho_\xi(a_2)\inv \cdot \Lambda_{1,\xi}(\sigma(a_2))\cdot \rho_\xi(a_2)\inv=\Lambda_{2,\xi}(\sigma(a_2)).
\]
This finishes the proof.
\end{proof}

So we denote this induced action of $\K$ on $Z_\A$ by $
\bar \Lambda:K^1\rto\mathrm{Aut}(Z_\A)$. By using this action we could define groupoid cohomology $H^*_{\bar \Lambda}(\K,Z_\A)$ of $\K$ with coefficients in $Z_\A$.

\subsection{The cocycle $\Xi$}\label{S appendix-Xi}
Recall that given a smooth lifting $\Lambda:K^1\rto\mathrm{SAut}^0(\A)$ of $\bar \Lambda$, and the corresponding smooth family of natural transformation $\Omega:K^{[2]}\times A^0\rto A^1$, we have the following elements of $Z_\A$ (cf. \eqref{E def-Xi})
\begin{align*}
\Xi(\xi,\eta,\zeta,a)
=&\, \Omega(\xi,\eta,a)\cdot \Omega(\xi\eta,\zeta,\Lambda_{\xi\eta}\inv \Lambda_\eta\Lambda_\xi(a))
\cdot \Omega(\xi,\eta\zeta,\Lambda_\xi\inv\Lambda_{\eta\zeta}\inv \Lambda_\zeta\Lambda_\eta\Lambda_\xi(a))\inv
\cdot \Lambda_\xi\inv[\Omega(\eta,\zeta,\Lambda_\xi(a))\inv]
\end{align*}
for three composable arrows $\xi,\eta,\zeta$ of $\K$. Therefore $\Xi$ is a cochain in $C^3_{\bar \Lambda}(\K,Z_\A)$.

\begin{theorem}\label{T Apendix-Xi}
$\Xi$ is a 3-cocycle in the cochain complex $C^*_{\bar \Lambda}(\K,Z_\A)$.
\end{theorem}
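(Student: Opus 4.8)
The plan is to recognise $\Xi$ as the \emph{associativity defect} of the candidate multiplication \eqref{E def-multi-T-4-1} on $G=A^1\times K^1$ and then to deduce the $4$-cocycle identity $d\Xi=0$ from the Mac\,Lane pentagon for a fourfold product. First I would reuse the computation already carried out in the proof of Theorem \ref{T S-Omega-def-G-SOmega}: there the two bracketings of a triple product were reduced via \eqref{E 3.3} to $\mathrm{LHS}$ and $\mathrm{RHS}$ of \eqref{E pre-associativity}, and by the very definition \eqref{E def-Xi} their ratio is the central element $\Xi(\xi,\eta,\zeta,\cdot)$. Restricting to identity arrows $\alpha=1_a$ (which makes the base objects of the successive products equal to the $\Lambda$-images $a_1,a_4,a_5$ of \eqref{E def-Xi}) this reads
\[
\big((1_a,\xi)\cdot(1_b,\eta)\big)\cdot(1_c,\zeta)=\big(\Xi(\xi,\eta,\zeta,a),1_{s(\xi)}\big)\cdot\Big((1_a,\xi)\cdot\big((1_b,\eta)\cdot(1_c,\zeta)\big)\Big),
\]
the prefactor being central and freely movable by item (3) of Proposition \ref{P SAut}. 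Thus $\Xi$ is exactly the associator of \eqref{E def-multi-T-4-1}.

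Next I would write out the pentagon for four composable arrows $\xi,\eta,\zeta,\tau$ and the pure arrows $g_1=(1_a,\xi),g_2,g_3,g_4$. Since the multiplication \eqref{E def-multi-T-4-1} is a single-valued map, the two fixed elements $((g_1g_2)g_3)g_4$ and $g_1(g_2(g_3g_4))$ have a well-defined central ratio, which may be computed along either of the two edge-paths of the pentagon. The length-three path $((g_1g_2)g_3)g_4\to(g_1(g_2g_3))g_4\to g_1((g_2g_3)g_4)\to g_1(g_2(g_3g_4))$ contributes the associators on $(g_1,g_2,g_3)$, on $(g_1,g_2g_3,g_4)$ and on $(g_2,g_3,g_4)$, i.e. $\Xi(\xi,\eta,\zeta)$, $\Xi(\xi,\eta\zeta,\tau)$ and the $\Lambda_\xi$-conjugate of $\Xi(\eta,\zeta,\tau)$; the length-two path $((g_1g_2)g_3)g_4\to(g_1g_2)(g_3g_4)\to g_1(g_2(g_3g_4))$ contributes $\Xi(\xi\eta,\zeta,\tau)$ and $\Xi(\xi,\eta,\zeta\tau)$. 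Equating the two products gives
\[
\bar\Lambda(\xi)\inv\big(\Xi(\eta,\zeta,\tau)\big)\cdot\Xi(\xi\eta,\zeta,\tau)\inv\cdot\Xi(\xi,\eta\zeta,\tau)\cdot\Xi(\xi,\eta,\zeta\tau)\inv\cdot\Xi(\xi,\eta,\zeta)=1,
\]
which is precisely $d\Xi=0$ in $C^4_{\bar\Lambda}(\K,Z_\A)$; the alternating signs $+,-,+,-,+$ of the coboundary formula of \S\ref{S appendix-groupoid-cohomology} record on which of the two paths each re-bracketing occurs.

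The one genuine subtlety, which I expect to be the main obstacle, is the bookkeeping of base objects and the resulting single twist. The tail re-bracketing $(g_2g_3)g_4\to g_2(g_3g_4)$ is performed \emph{with $g_1$ kept on the left}, so its $\Xi$-correction lives over an object in the image of $\Lambda_\xi\inv$; transporting it back to $a$ is exactly conjugation by $\Lambda_\xi$, which is the action $\bar\Lambda(\xi)\inv$ on $Z_\A$ defined in \S\ref{S appdndix-induced-K-action-on-ZA}. Confirming that this is the \emph{only} edge carrying a twist, and that the other four associators sit over $a$ untwisted, is where care is required. An entirely computational alternative would substitute the four-factor expression \eqref{E def-Xi} into all five terms of $d\Xi$ and cancel the resulting $\Omega$-factors pairwise using the naturality relation \eqref{E 3.3} together with associativity of multiplication in $\A$; this bypasses the conceptual setup but is considerably more tedious, and I would keep it only as a means of double-checking the placement of the $\bar\Lambda(\xi)\inv$ twist.
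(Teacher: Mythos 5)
Your proof is correct, but it takes a genuinely different route from the paper's. The paper proves the theorem by a direct cochain computation: it first shows, using centrality and $\A$-invariance, that the four factors in \eqref{E def-Xi} may be cyclically permuted (the lemma $\Xi=\Xi_1=\Xi_2=\Xi_3$), and then evaluates $d\,\Xi(\xi_1,\xi_2,\xi_3,\xi_4)$ head-on, repeatedly extracting $\A$-invariant central blocks (the terms $\mc A$, $\mc B$, $\mc C$ in the paper's notation) and shifting them with \eqref{E 3.3} until everything cancels. You instead interpret $\Xi$ as the associator of the partial multiplication \eqref{E def-multi-T-4-1} --- which, as you implicitly use, still makes sense when \eqref{E generalized-cocycle-1} fails, with units supplied by \eqref{E Omega(1,alpha)=Omega(alpha,1)=1} --- and read off $d\,\Xi=0$ from the pentagon; this is the classical Eilenberg--Mac\,Lane argument for group extensions transplanted to groupoids, and your identification of the single twisted edge (the re-bracketing performed inside the left factor $g_1(\,\cdot\,)$, which produces the $\bar\Lambda(\xi)\inv$ of \S\ref{S appdndix-induced-K-action-on-ZA}) is exactly right. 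Two points should be made explicit for the argument to be complete: (a) after one multiplication the factors are no longer pure arrows $(1_a,\xi)$, so the pentagon edges require the associator formula for \emph{arbitrary} composable triples, not just pure ones; this is in fact what the computation in the proof of Theorem \ref{T S-Omega-def-G-SOmega} already delivers, since both bracketings reduce via \eqref{E 3.3} to $\alpha$ times the two sides of \eqref{E pre-associativity} with a common right tail, so the general associator is again $\Xi(\xi,\eta,\zeta,s(\alpha))$; and (b) ``central and freely movable'' needs more than pointwise centrality: to pass a defect untwisted through a right factor, and through a left factor with the $\bar\Lambda(\xi)\inv$ twist, you must use that $\Xi(\xi,\eta,\zeta)$ is an $\A$-invariant section, i.e.\ an element of $Z_\A$ as in item (3) of Proposition \ref{P SAut}, because transport along an arrow is conjugation and only invariance makes the transported value basepoint-independent. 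With these two lemmas supplied your proof is shorter and more conceptual than the paper's, explaining \emph{why} $\Xi$ is a cocycle rather than verifying it; the paper's route, by contrast, stays entirely at the cochain level and never has to reason about a non-associative multiplication, with its cyclic-permutation lemma doing the organizing work that the pentagon does for you.
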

Before we prove this theorem, we first analyze the properties of $\Xi$.

\begin{lemma}
We could cyclically permute the elements in the product of the definition of $\Xi$ without change the values. Take three composable arrows $\xi,\eta,\zeta$ in $\K$. For any $a\in A^0$, set
\begin{eqnarray*}
\Xi_1(\xi,\eta,\zeta,a)
&=&\Omega(\xi\eta,\zeta,a)
\cdot \Omega(\xi,\eta\zeta,\Lambda_\xi\inv \Lambda_{\eta\zeta}\inv \Lambda_\zeta \Lambda_{\xi\eta}(a))\inv
\\&&
\cdot \Lambda_\xi\inv[\Omega(\eta,\zeta,\Lambda_\eta\inv \Lambda_{\xi\eta}(a))\inv]
\cdot \Omega(\xi,\eta,\Lambda_\xi\inv\Lambda_\eta\inv \Lambda_{\xi\eta}(a)),\\
\Xi_2(\xi,\eta,\zeta,a)
&=&\Omega(\xi,\eta\zeta,\Lambda_\xi\inv \Lambda_{\eta\zeta}\inv \Lambda_{\xi\eta\zeta}(a))\inv
\cdot \Lambda_\xi\inv[\Omega(\eta,\zeta,\Lambda_\eta\inv \Lambda_\zeta\inv \Lambda_{\xi\eta\zeta}(a))\inv]
\\&&
\cdot \Omega(\xi,\eta,\Lambda_\xi\inv \Lambda_\eta\inv \Lambda_\zeta\inv\Lambda_{\xi\eta\zeta}(a))
\cdot\Omega(\xi\eta,\zeta,\Lambda_{\xi\eta}\inv \Lambda_\zeta\inv \Lambda_{\xi\eta\zeta}(a)),
\\
\Xi_3(\xi,\eta,\zeta,a)
&=&\Lambda_\xi\inv[\Omega(\eta,\zeta,\Lambda_\eta\inv \Lambda_\zeta\inv \Lambda_{\eta\zeta}\Lambda_\xi(a))\inv]
\cdot \Omega(\xi,\eta,\Lambda_\xi\inv\Lambda_\eta\inv \Lambda_\zeta\inv\Lambda_{\eta\zeta}\Lambda_\xi(a))
\\&&
\cdot\Omega(\xi\eta,\zeta,\Lambda_{\xi\eta}\inv \Lambda_\zeta\inv \Lambda_{\eta\zeta}\Lambda_\xi(a))
\cdot \Omega(\xi,\eta\zeta,a)\inv
\end{eqnarray*}
Then $\Xi=\Xi_1=\Xi_2=\Xi_3$.
\end{lemma}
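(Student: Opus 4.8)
The plan is to establish the single equality $\Xi=\Xi_1$; the chain $\Xi_1=\Xi_2=\Xi_3$ then follows by running the very same argument one cut-point further along the loop. First I would make the bookkeeping explicit. Writing $\Xi(\xi,\eta,\zeta,a)=g_1g_2g_3g_4$ with $g_1=\Omega(\xi,\eta,a)$, $g_2=\Omega(\xi\eta,\zeta,a_4)$, $g_3=\Omega(\xi,\eta\zeta,a_5)\inv$ and $g_4=\Lambda_\xi\inv[\Omega(\eta,\zeta,a_1)\inv]$, the fact that each $\Omega(\xi,\eta,-)$ is a natural transformation shows that these are composable arrows $a\xrightarrow{g_1}a_4\xrightarrow{g_2}a_7\xrightarrow{g_3}a_5\xrightarrow{g_4}a$, where $a_4=\Lambda_{\xi\eta}\inv\Lambda_\eta\Lambda_\xi(a)$, $a_7=\Lambda_{\xi\eta\zeta}\inv\Lambda_\zeta\Lambda_\eta\Lambda_\xi(a)$ and $a_5=\Lambda_\xi\inv\Lambda_{\eta\zeta}\inv\Lambda_\zeta\Lambda_\eta\Lambda_\xi(a)$; thus $\Xi$ is a loop at $a$, and the same check shows each $\Xi_i$ is a loop at $a$ once its base points are shifted as in the statement. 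This shift is exactly where the groupoid case departs from the classical one: if $\A=BA$ for a group $A$ all objects coincide, $\Xi=g_1g_2g_3g_4$ lies in $Z(A)$, and every cyclic permutation of the product is a conjugate of $\Xi$, hence equals $\Xi$ by centrality --- this is the familiar cyclic symmetry of the obstruction $3$-cocycle in group extension theory.

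For a general groupoid I would realize the same rotation by sliding the leading factor $g_1$ once around the loop, invoking the naturality relation \eqref{E 3.3} for each of the four $\Omega$'s in turn. Applying \eqref{E 3.3} for $\Omega(\xi\eta,\zeta)$ to the arrow $g_1$ gives $g_1g_2=h_2\,G$, where $h_2=\Omega(\xi\eta,\zeta,a)$ is the first factor of $\Xi_1$ and $G=\Lambda_{\xi\eta\zeta}\inv\Lambda_\zeta\Lambda_{\xi\eta}(g_1)$ is a $\Lambda$-transported copy of $g_1$; applying \eqref{E 3.3} for $\Omega(\xi,\eta\zeta)$ gives $Gg_3=h_3\,G'$, with $h_3$ the second factor of $\Xi_1$ and $G'=\Lambda_\xi\inv\Lambda_{\eta\zeta}\inv\Lambda_\zeta\Lambda_{\xi\eta}(g_1)$; and applying $\Lambda_\xi\inv$ to relation \eqref{E 3.3} for $\Omega(\eta,\zeta)$ gives $G'g_4=h_4\,G''$, with $h_4$ the third factor of $\Xi_1$ and $G''=\Lambda_\xi\inv\Lambda_\eta\inv\Lambda_{\xi\eta}(g_1)$. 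Substituting successively yields
\[
\Xi=g_1g_2g_3g_4=h_2\,G\,g_3\,g_4=h_2h_3\,G'\,g_4=h_2h_3h_4\,G'',
\]
so it remains only to identify the residual transported arrow $G''$ with the last factor $h_1=\Omega(\xi,\eta,c_4)$ of $\Xi_1$, where $c_4=\Lambda_\xi\inv\Lambda_\eta\inv\Lambda_{\xi\eta}(a)$.

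This final identification is itself an instance of naturality: since $h_1=\Omega(\xi,\eta,c_4)$ is an arrow $c_4\to a$, relation \eqref{E 3.3} for $\Omega(\xi,\eta)$ applied to $h_1$ reads $\Lambda_{\xi\eta}\inv\Lambda_\eta\Lambda_\xi(h_1)=\Omega(\xi,\eta,c_4)\inv\,h_1\,\Omega(\xi,\eta,a)=g_1$, whence $h_1=\Lambda_\xi\inv\Lambda_\eta\inv\Lambda_{\xi\eta}(g_1)=G''$ and therefore $\Xi=h_2h_3h_4h_1=\Xi_1$. I expect the only genuine obstacle to be this base-point bookkeeping: at each slide one must check that the source and target of the transported copy of $g_1$ make the intermediate products $Gg_3$, $G'g_4$ composable and that the loop closes after exactly one revolution. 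A clean way to package the verification is to recall that $\Xi$ defines an $\A$-invariant section of $Z_\A$, so that the central value does not depend on the vertex at which one cuts the loop; the naturality computation above is precisely the explicit transport realizing this invariance. The equalities $\Xi_1=\Xi_2=\Xi_3$ then follow by repeating the argument with $\Xi_1$ and $\Xi_2$ in place of $\Xi$.
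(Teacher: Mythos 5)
Your proposal is correct, and it reaches the conclusion by a genuinely different route than the paper. The paper's proof works in the opposite logical order: it first shows, by a long computation that conjugates an arbitrary $\alpha\in\Gamma_a$ through the four factors using \eqref{E 3.3}, that $\Xi_1$ (and likewise $\Xi_2,\Xi_3$) is a central, $\A$-invariant section, i.e.\ takes values in $Z_\A$; it then observes the conjugation identity $\Xi(\xi,\eta,\zeta,a)=\Omega(\xi,\eta,a)\cdot\Xi_1(\xi,\eta,\zeta,\Lambda_{\xi\eta}\inv\Lambda_\eta\Lambda_\xi(a))\cdot\Omega(\xi,\eta,a)\inv$ and invokes the $\A$-invariance of $\Xi_1$ to trade the shifted base point for the original one, giving $\Xi=\Xi_1$. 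You instead slide the leading factor $g_1$ once around the loop by four applications of \eqref{E 3.3}, obtaining $\Xi=h_2h_3h_4G''$ and then identifying $G''=h_1$ by a neat self-application of \eqref{E 3.3} to $h_1$; I checked your base-point bookkeeping ($a_4$, $a_7$, $a_5$, $c_4$ and the composability of $Gg_3$ and $G'g_4$) and it is right. The key structural difference is that your transport argument never needs centrality or invariance of any $\Xi_i$ as an input --- those properties of $\Xi_1,\Xi_2,\Xi_3$ then follow for free from the equalities together with the fact, already established in Section \ref{S 4}, that $\Xi$ itself is central and $\A$-invariant; the paper, by contrast, must certify each $\Xi_i\in Z_\A$ independently, which has the side benefit that these memberships are available verbatim when the appendix substitutes $\Xi_2$ and $\Xi_3$ into the coboundary computation for Theorem \ref{T Apendix-Xi}. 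Your closing ``packaging'' remark is also sound and is in fact the shortest complete proof: since substituting $a_4=\Lambda_{\xi\eta}\inv\Lambda_\eta\Lambda_\xi(a)$ into the formula for $\Xi_1$ gives exactly the rotated product $g_2g_3g_4g_1=g_1\inv\,\Xi(\xi,\eta,\zeta,a)\,g_1$, the $\A$-invariance of $\Xi$ alone yields $\Xi(\xi,\eta,\zeta,a_4)=\Xi_1(\xi,\eta,\zeta,a_4)$, and because $a\mapsto a_4$ is a bijection of $A^0$ (the $\Lambda$'s are automorphisms) this is the full equality $\Xi=\Xi_1$; this is essentially the paper's mechanism run with $\Xi$ in place of $\Xi_1$, so the hypotheses it needs are already in hand before the lemma.
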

\begin{proof}
We show that $\Xi_1$, $\Xi_2$ and $\Xi_3$ give rise to maps from $K^{[3]}$ to $Z_\A$. Then by the $\A$-invariance of elements of $Z_\A$. We get this lemma.

We give the explicit computation about $\Xi_1$. We first show that it belongs to the center. Take an arrow $\alpha\in \Gamma_a$. By applying \eqref{E 3.3} repeatedly, we have
\begin{eqnarray*}
&&
\alpha\cdot \Omega(\xi\eta,\zeta,a)
\cdot \Omega(\xi,\eta\zeta,\Lambda_\xi\inv\Lambda_{\eta\zeta}\inv \Lambda_\zeta\Lambda_{\xi\eta}(a))\inv
\cdot \Lambda_\xi\inv[\Omega(\eta,\zeta,\Lambda_\eta\inv \Lambda_{\xi\eta}(a))\inv]\cdot \Omega(\xi,\eta,\Lambda_\xi\inv \Lambda_\eta\inv\Lambda_{\xi\eta}(a))\\
&\stackrel{\eqref{E 3.3}}{=}&
\left\{\Omega(\xi\eta,\zeta,a)
\cdot
\Lambda_{\xi\eta\zeta}\inv \Lambda_\zeta\Lambda_{\xi\eta}(\alpha)
\right\}
\cdot \Omega(\xi,\eta\zeta,\Lambda_\xi\inv \Lambda_{\eta\zeta}\inv \Lambda_\zeta \Lambda_{\xi\eta}(a))\inv
\\&&
\cdot \Lambda_\xi\inv[\Omega(\eta,\zeta,\Lambda_\eta\inv \Lambda_{\xi\eta}(a))\inv]\cdot \Omega(\xi,\eta,\Lambda_\xi\inv \Lambda_\eta\inv \Lambda_{\xi\eta}(a))
\\
&\stackrel{\eqref{E 3.3}}{=}&
\Omega(\xi\eta,\zeta,a)\cdot \left\{\Omega(\xi,\eta\zeta,\Lambda_\xi\inv\Lambda_{\eta\zeta}\inv \Lambda_\zeta \Lambda_{\xi\eta}(a))\inv
\cdot
(\Lambda_{\xi\eta\zeta}\inv \Lambda_{\eta\zeta}\Lambda_\xi)\inv \Lambda_{\xi\eta\zeta}\inv \Lambda_\zeta\Lambda_{\xi\eta}(\alpha)
\right\}
\\&&
\cdot \Lambda_\xi\inv[\Omega(\eta,\zeta,\Lambda_\eta\inv \Lambda_{\xi\eta}(a))\inv]\cdot \Omega(\xi,\eta,\Lambda_\xi\inv \Lambda_\eta\inv\Lambda_{\xi\eta}(a))
\\
&=&
\Omega(\xi\eta,\zeta,a)\cdot \Omega(\xi,\eta\zeta,\Lambda_\xi\inv\Lambda_{\eta\zeta}\inv \Lambda_\zeta\Lambda_{\xi\eta}(a))\inv
\\&&
\cdot
\left\{\Lambda_\xi\inv \Lambda_{\eta\zeta}\inv \Lambda_\zeta\Lambda_{\xi\eta}(\alpha)\right\}
\cdot \Lambda_\xi\inv\left[\Omega(\eta,\zeta,\Lambda_\eta\inv \Lambda_{\xi\eta}(a))\inv\right]
\cdot \Omega(\xi,\eta,\Lambda_\xi\inv\Lambda_\eta\inv
\Lambda_{\xi\eta}(a))\\
&=&\Omega(\xi\eta,\zeta,a)\cdot \Omega(\xi,\eta\zeta,\Lambda_\xi\inv\Lambda_{\eta\zeta}\inv \Lambda_\zeta\Lambda_{\xi\eta}(a))\inv
\\&&
\cdot
\left\{\Lambda_\xi\inv \left[
\Lambda_{\eta\zeta}\inv \Lambda_\zeta\Lambda_{\xi\eta}(\alpha)\cdot \Omega(\eta,\zeta,\Lambda_\eta\inv \Lambda_{\xi\eta}(a))\inv
\right]\right\}
\cdot \Omega(\xi,\eta,\Lambda_\xi\inv\Lambda_\eta\inv \Lambda_{\xi\eta}(a))
\\
&\stackrel{\eqref{E 3.3}}{=}&\Omega(\xi\eta,\zeta,a)\cdot \Omega(\xi,\eta\zeta,\Lambda_\xi\inv\Lambda_{\eta\zeta}\inv \Lambda_\zeta\Lambda_{\xi\eta}(a))\inv 
\cdot
\\&&
\Lambda_\xi\inv\left\{ \Omega(\eta,\zeta,\Lambda_\eta\inv \Lambda_{\xi\eta}(a))\inv \cdot (\Lambda_{\eta\zeta}\inv \Lambda_\zeta\Lambda_\eta)\inv \Lambda_{\eta\zeta}\inv \Lambda_\zeta\Lambda_{\xi\eta}(\alpha)\right\}
\cdot \Omega(\xi,\eta,\Lambda_\xi\inv\Lambda_\eta\inv \Lambda_{\xi\eta}(a))
\\
&=&\Omega(\xi\eta,\zeta,a)\cdot \Omega(\xi,\eta\zeta,\Lambda_\xi\inv \Lambda_{\eta\zeta}\inv \Lambda_\zeta \Lambda_{\xi\eta}(a))\inv %
\\&&
\cdot
\left\{\Lambda_\xi\inv [\Omega(\eta,\zeta,\Lambda_\eta\inv \Lambda_{\xi\eta}(a))\inv]
\cdot
\Lambda_\xi\inv \Lambda_\eta\inv \Lambda_{\xi\eta}(\alpha)
\right\}
\cdot \Omega(\xi,\eta,\Lambda_\xi\inv \Lambda_\eta\inv \Lambda_{\xi\eta}(a))
\\
&\stackrel{\eqref{E 3.3}}{=}&\Omega(\xi\eta,\zeta,a)\cdot \Omega(\xi,\eta\zeta,\Lambda_\xi\inv\Lambda_{\eta\zeta}\inv \Lambda_\zeta\Lambda_{\xi\eta}(a))\inv \cdot
\Lambda_\xi\inv [\Omega(\eta,\zeta,\Lambda_\eta\inv \Lambda_{\xi\eta}(a))\inv]
\\&&
\cdot \left\{\Omega(\xi,\eta,\Lambda_\xi\inv\Lambda_\eta\inv \Lambda_{\xi\eta}(a))
\cdot
\Lambda_{\xi\eta}\inv \Lambda_\eta\Lambda_\xi \Lambda_\xi\inv \Lambda_\eta\inv \Lambda_{\xi\eta}(\alpha)
\right\}
\\
&=&\Omega(\xi\eta,\zeta,a)\cdot \Omega(\xi,\eta\zeta,\Lambda_\xi\inv\Lambda_{\eta\zeta}\inv \Lambda_\zeta\Lambda_{\xi\eta}(a))\inv
\\&&
\cdot
\Lambda_\xi\inv [\Omega(\eta,\zeta,\Lambda_\eta\inv \Lambda_{\xi\eta}(a))\inv]  \cdot \Omega(\xi,\eta,\Lambda_\xi\inv \Lambda_\eta\inv \Lambda_{\xi\eta}(a))\cdot \{\alpha\}.
\end{eqnarray*}
Therefore, $\Xi_1(\xi,\eta,\zeta,a)\in ZA^0$. One can also see that, it is $\A$-invariant, hence gives rise to a map $K^{[3]}\rto Z_A$. From the expression of $\Xi$ and $\Xi_1$ we have
\begin{align*}
\Xi(\xi,\eta,\zeta,a)
&= \Omega(\xi,\eta,a)\cdot
\Xi_1(\xi,\eta,\zeta,\Lambda_{\xi\eta}\inv \Lambda_\eta\Lambda_\xi(a))
\cdot \Omega(\xi,\eta,\Lambda_\xi\inv\Lambda_\eta\inv \Lambda_{\xi\eta}(\Lambda_{\xi\eta}\inv \Lambda_\eta\Lambda_\xi(a)))\inv \\
&=\Omega(\xi,\eta,a)\cdot
\Xi_1(\xi,\eta,\zeta,\Lambda_{\xi\eta}\inv \Lambda_\eta\Lambda_\xi(a))
\cdot \Omega(\xi,\eta, a)\inv .
\end{align*}
Then since $\Xi_1$ has images in $Z_\A$, we also have
\begin{align*}
\Xi_1(\xi,\eta,\zeta,a)
&=\Omega(\xi,\eta,a)\cdot
\Xi_1(\xi,\eta,\zeta,\Lambda_{\xi\eta}\inv \Lambda_\eta\Lambda_\xi(a))
\cdot \Omega(\xi,\eta, a)\inv
\end{align*}
Consequently,
$
\Xi(\xi,\eta,\zeta,a)=\Xi_1(\xi,\eta,\zeta,a).
$
By similar computations, we see that $\Xi_2$ and $\Xi_3$ also belong to $Z_\A$, and $\Xi_2=\Xi_3=\Xi$. This finishes the proof.
\end{proof}

Now we proceed to prove Theorem \ref{T Apendix-Xi}. We compute the differential of $\Xi$. Since the value of $\Xi$ lies in the center, we have
\begin{align*}
d\,\Xi(\xi_1,\xi_2,\xi_3,\xi_4)
=& \Lambda_{\xi_1}\inv(\Xi(\xi_2,\xi_3,\xi_4))\cdot \Xi(\xi_1\xi_2,\xi_3,\xi_4)\inv
\cdot\Xi(\xi_1,\xi_2\xi_3,\xi_4)\cdot \Xi(\xi_1,\xi_2,\xi_3\xi_4)\inv\cdot \Xi(\xi_1,\xi_2,\xi_3)\\
=&\Lambda_{\xi_1}\inv\Xi(\xi_2,\xi_3,\xi_4)\cdot
\Xi(\xi_1\xi_2,\xi_3,\xi_4)\inv\cdot
\Xi(\xi_1,\xi_2,\xi_3\xi_4)\inv
\cdot \Xi(\xi_1,\xi_2\xi_3,\xi_4) \cdot \Xi(\xi_1,\xi_2,\xi_3).
\end{align*}
In the following we use some abbreviations of notations. For example we write
\[
\Lambda_i:=\Lambda_{\xi_i},\qq \xi_{12}:=\xi_1\xi_2.
\]
We first compute $ \Xi(\xi_1\xi_2,\xi_3,\xi_4)\inv\cdot \Xi(\xi_1,\xi_2,\xi_3\xi_4)\inv\cdot \Xi(\xi_1,\xi_2\xi_3,\xi_4)$. Take an object $a\in A^0$, we have
\begin{eqnarray*}
\Upsilon&:=&\Xi(\xi_1\xi_2,\xi_3,\xi_4)(a)\inv\cdot \Xi(\xi_1,\xi_2,\xi_3\xi_4)(a)\inv\cdot \Xi(\xi_1,\xi_2\xi_3,\xi_4)(a)\\
&=&\Xi_2(\xi_1\xi_2,\xi_3,\xi_4)(a)\inv\cdot \Xi_2(\xi_1,\xi_2,\xi_3\xi_4)(a)\inv\cdot \Xi_2(\xi_1,\xi_2\xi_3,\xi_4)(a)\\
&=&\Big\{\Omega(\xi_{123},\xi_4, \Lambda_{123}\inv \Lambda_4\inv \Lambda_{1234}(a))\inv \cdot \Omega(\xi_{12},\beta_3,\Lambda_{12}\inv \Lambda_3\inv \Lambda_4\inv \Lambda_{1234}(a))\inv
\\&&
\cdot
\Lambda_{12}\inv[\Omega(\xi_3,\xi_4, \Lambda_3\inv \Lambda_4\inv \Lambda_{1234}(a))] \cdot \Omega(\xi_{12},\xi_{34},\Lambda_{12}\inv \Lambda_{34}\inv \Lambda_{1234}(a)) \Big\}
\\&&
\cdot
\Big\{\Omega(\xi_{12},\xi_{34}, \Lambda_{12}\inv \Lambda_{34}\inv \Lambda_{1234}(a))\inv \cdot \Omega(\xi_1,\xi_2,\Lambda_1\inv \Lambda_2\inv \Lambda_{34}\inv \Lambda_{1234}(a))\inv
\\&&
\cdot
\Lambda_1\inv[\Omega(\xi_2,\xi_{34}, \Lambda_2\inv \Lambda_{34}\inv \Lambda_{1234}(a))]\cdot \Omega(\xi_1,\xi_{234},\Lambda_1\inv \Lambda_{234}\inv \Lambda_{1234}(a)) \Big\}
\\
&&
\cdot
\Big\{\Omega(\xi_1,\xi_{234},\Lambda_1\inv \Lambda_{234}\inv \Lambda_{1234}(a))\inv  \cdot \Lambda_1\inv[\Omega(\xi_{23},\xi_4, \Lambda_{23}\inv \Lambda_4\inv \Lambda_{1234}(a))\inv ]
\\&&
\cdot
\Omega(\xi_1,\xi_{23},\Lambda_1\inv \Lambda_{23}\inv \Lambda_4\inv  \Lambda_{1234}(a))\cdot\Omega(\xi_{123},\xi_4, \Lambda_{123}\inv \Lambda_4\inv S_{1234}(a))\Big\}
\\
&=
&
\Omega(\xi_{123},\xi_4, \Lambda_{123}\inv \Lambda_4\inv \Lambda_{1234}(a))\inv
\cdot
\Big\{\Omega(\xi_{12},\xi_3,\Lambda_{12}\inv \Lambda_3\inv \Lambda_4\inv \Lambda_{1234}(a))\inv
\\&&
\cdot \Lambda_{12}\inv[\Omega(\xi_3,\xi_4, \Lambda_3\inv \Lambda_4\inv \Lambda_{1234}(a))]
\cdot
\Omega(\xi_1,\xi_2,\Lambda_1\inv \Lambda_2\inv \Lambda_{34}\inv \Lambda_{1234}(a))\inv
\\&&
\cdot \Lambda_1\inv [\Omega(\xi_2,\xi_{34},\Lambda_2\inv \Lambda_{34}\inv \Lambda_{1234}(a))]
\cdot
\Lambda_1\inv[\Omega(\xi_{23},\xi_4, \Lambda_{23}\inv \Lambda_4\inv  \Lambda_{1234}(a))\inv ]
\\&&
\cdot
\Omega(\xi_1,\xi_{23}, \Lambda_1\inv \Lambda_{23}\inv \Lambda_4\inv \Lambda_{1234}(a))\Big\}
\cdot
\Omega(\xi_{123},\xi_4,\Lambda_{123}\inv \Lambda_4\inv \Lambda_{1234}(a))
\end{eqnarray*}
Denote the middle term by
\begin{align*}
\mc A(\xi_1,\xi_2,\xi_3,\xi_4)(\Lambda_{123}\inv \Lambda_4\inv \Lambda_{1234}(a))
:=\,&\Omega(\xi_{12},\xi_3,\Lambda_{12}\inv \Lambda_3\inv \Lambda_4\inv \Lambda_{1234}(a))\inv \cdot \Lambda_{12}\inv[\Omega(\xi_3,\xi_4, \Lambda_3\inv \Lambda_4\inv \Lambda_{1234}(a))]
\\& \cdot
\Omega(\xi_1,\xi_2,\Lambda_1\inv \Lambda_2\inv \Lambda_{34}\inv \Lambda_{1234}(a))\inv \cdot \Lambda_1\inv [\Omega(\xi_2,\xi_{34},\Lambda_2\inv \Lambda_{34}\inv \Lambda_{1234}(a))]
\\& \cdot
\Lambda_1\inv[\Omega(\xi_{23},\xi_4, \Lambda_{23}\inv \Lambda_4\inv  \Lambda_{1234}(a))\inv ]\cdot
\Omega(\xi_1,\xi_{23}, \Lambda_1\inv \Lambda_{23}\inv \Lambda_4\inv \Lambda_{1234}(a)).
\end{align*}
Then one finds that $\mc A(\xi_1,\xi_2,\xi_3,\xi_4)(\Lambda_{123}\inv \Lambda_4\inv \Lambda_{1234}(a))$ belongs to the center of $\Gamma_{\Lambda_{123}\inv \Lambda_4\inv \Lambda_{1234}(a)}$. And by \eqref{E 3.3}, one finds that $\mc A(\xi_1,\xi_2,\xi_3,\xi_4)$ gives rise to an $\A$-invariant section of $\Z\A\rto\A$, hence belongs to $Z_\A$. Therefore $\Upsilon$ is equal to
\begin{align*}
\mc A(\xi_1,\xi_2,\xi_3,\xi_4)(a)
=\,&\Omega(\xi_{12},\xi_3,\Lambda_{12}\inv \Lambda_3\inv \Lambda_{123}(a))\inv \cdot \Lambda_{12}\inv(\Omega(\xi_3,\xi_4, \Lambda_3\inv \Lambda_{123}(a)))
\\&
\cdot
\Omega(\xi_1,\xi_2,\Lambda_1\inv \Lambda_2\inv \Lambda_{34}\inv \Lambda_4 \Lambda_{123}(a))\inv \cdot \Lambda_1\inv (\Omega(\xi_2,\xi_{34},\Lambda_2\inv \Lambda_{34}\inv \Lambda_4 \Lambda_{123}(a)))
\\&
\cdot
\Lambda_1\inv(\Omega(\xi_{23},\xi_4, \Lambda_{23}\inv \Lambda_{123}(a))\inv )\cdot
\Omega(\xi_1,\xi_{23}, \Lambda_1\inv \Lambda_{23}\inv \Lambda_{123}(a)),
\end{align*}
where we replace the $a$ in the expression of $\mc A(\xi_1,\xi_2,\xi_3,\xi_4)(\Lambda_{123}\inv \Lambda_4\inv \Lambda_{1234}(a))$ by $\Lambda_{1234}\inv \Lambda_4 \Lambda_{123}(a)$.
We next multiply this term with $\Xi(\xi_1,\xi_2,\xi_3)(a)$ and get
\begin{eqnarray*}
\Theta&:=& \Upsilon\cdot \Xi(\xi_1,\xi_2,\xi_3)(a)\\
&=&\Omega(\xi_{12},\xi_3,\Lambda_{12}\inv \Lambda_3\inv \Lambda_{123}(a))\inv \cdot \Lambda_{12}\inv(\Omega(\xi_3,\xi_4, \Lambda_3\inv \Lambda_{123}(a)))
\\&&
\cdot
\Omega(\xi_1,\xi_2,\Lambda_1\inv \Lambda_2\inv \Lambda_{34}\inv \Lambda_4 \Lambda_{123}(a))\inv \cdot \Lambda_1\inv [\Omega(\xi_2,\xi_{34},\Lambda_2\inv \Lambda_{34}\inv \Lambda_4 \Lambda_{123}(a))]
\\&& \cdot
\Lambda_1\inv[\Omega(\xi_{23},\xi_4, \Lambda_{23}\inv \Lambda_{123}(a))\inv ] \cdot
\Omega(\xi_1,\xi_{23}, \Lambda_1\inv \Lambda_{23}\inv \Lambda_{123}(a))
\\&&
\cdot
\Omega(\xi_1,\xi_{23},\Lambda_1\inv \Lambda_{23}\inv \Lambda_{123}(a))\inv \cdot \Lambda_1\inv [\Omega(\xi_2,\xi_3,\Lambda_2\inv \Lambda_3\inv \Lambda_{123}(a))\inv ]\\
&& \cdot
\Omega(\xi_1,\xi_2,\Lambda_1\inv \Lambda_2\inv \Lambda_3\inv \Lambda_{123}(a))\cdot \Omega(\xi_{12},\xi_3,\Lambda_{12}\inv \Lambda_3\inv \Lambda_{123}(a))\\
&=
&\Omega(\xi_{12},\xi_3,\Lambda_{12}\inv \Lambda_3\inv \Lambda_{123}(a))\inv \cdot
\Big\{\Lambda_{12}\inv[\Omega(\xi_3,\xi_4, \Lambda_3\inv \Lambda_{123}(a))]
\\&& \cdot
\Omega(\xi_1,\xi_2,\Lambda_1\inv \Lambda_2\inv \Lambda_{34}\inv \Lambda_4 \Lambda_{123}(a))\inv \cdot \Lambda_1\inv [\Omega(\xi_2,\xi_{34},\Lambda_2\inv \Lambda_{34}\inv \Lambda_4 \Lambda_{123}(a))]
\\&& \cdot
\Lambda_1\inv[\Omega(\xi_{23},\xi_4, \Lambda_{23}\inv \Lambda_{123}(a))\inv ]\cdot \Lambda_1\inv [\Omega(\xi_2,\xi_3,\Lambda_2\inv \Lambda_3\inv \Lambda_{123}(a))\inv ]
\\
&& \cdot
\Omega(\xi_1,\xi_2,\Lambda_1\inv \Lambda_2\inv \Lambda_3\inv \Lambda_{123}(a))\Big\}
\cdot \Omega(\xi_{12},\xi_3,\Lambda_{12}\inv \Lambda_3\inv \Lambda_{123}(a)).
\end{eqnarray*}
As above, denote the middle term by $\mc B(\xi_1,\xi_2,\xi_3,\xi_4)(\Lambda_{12}\inv \Lambda_3\inv \Lambda_{123}(a))$. Then one can also see that it gives rise to an $\A$-invariant section $\mc B(\xi_1,\xi_2,\xi_3,\xi_4)\in Z_\A$. So
\begin{eqnarray*}
\Theta&=&\mc B(\xi_1,\xi_2,\xi_3,\xi_4)(a)\\
&=&
\Lambda_{12}\inv[\Omega(\xi_3,\xi_4, \Lambda_{12}(a))]
\cdot
\Omega(\xi_1,\xi_2,\Lambda_1\inv \Lambda_2\inv \Lambda_{34}\inv \Lambda_4 \Lambda_3 \Lambda_{12}(a))\inv
\\&& \cdot
\Lambda_1\inv [\Omega(\xi_2,\xi_{34},\Lambda_2\inv \Lambda_{34}\inv \Lambda_4 \Lambda_3 \Lambda_{12}(a))]
\cdot
\Lambda_1\inv[\Omega(\xi_{23},\xi_4,\Lambda_{23}\inv \Lambda_3 \Lambda_{12}(a))\inv]
\\&& \cdot
\Lambda_1\inv[\Omega(\xi_2,\xi_3,\Lambda_2\inv \Lambda_{12}(a))\inv]
\cdot \Omega(\xi_1,\xi_2,\Lambda_1\inv \Lambda_2\inv \Lambda_{12}(a)).
\end{eqnarray*}
We rewrite the RHS of this equality as
\begin{eqnarray*}
\Theta&=&\Lambda_{12}\inv[\Omega(\xi_3,\xi_4, \Lambda_{12}(a))]
\cdot
\Big\{\Omega(\xi_1,\xi_2,\Lambda_1\inv \Lambda_2\inv \Lambda_{34}\inv \Lambda_4 \Lambda_3 \Lambda_{12}(a))\inv
\\&& \cdot
\Lambda_1\inv [\Omega(\xi_2,\xi_{34},\Lambda_2\inv \Lambda_{34}\inv \Lambda_4 \Lambda_3 \Lambda_{12}(a))]
\cdot
\Lambda_1\inv[\Omega(\xi_{23},\xi_4,\Lambda_{23}\inv \Lambda_3 \Lambda_{12}(a))\inv]
\\&& \cdot
\Lambda_1\inv[\Omega(\xi_2,\xi_3,\Lambda_2\inv \Lambda_{12}(a))\inv]
\cdot \Omega(\xi_1,\xi_2,\Lambda_1\inv \Lambda_2\inv \Lambda_{12}(a))
\\
&& \cdot \Lambda_{12}\inv[\Omega(\xi_3,\xi_4, \Lambda_{12}(a))]\Big\}
\cdot \Lambda_{12}\inv[\Omega(\xi_3,\xi_4, \Lambda_{12}(a))]\inv.
\end{eqnarray*}
As above, denote the middle term by $\mc C(\xi_1,\xi_2,\xi_3,\xi_4)(\Lambda_{12}\inv  \Lambda_{34}\inv \Lambda_4 \Lambda_3 \Lambda_{12}(a))$.
Then one also see that it gives rise to an $\A$-invariant section in $Z_\A$. Therefore
\begin{eqnarray*}
\Theta&=&\mc C(\xi_1,\xi_2,\xi_3,\xi_4)(a)\\
&=&\Omega(\xi_1,\xi_2,\Lambda_1\inv \Lambda_2\inv \Lambda_{12}(a))\inv
\cdot
\Lambda_1\inv [\Omega(\xi_2,\xi_{34},\Lambda_2\inv \Lambda_{12}(a))]
\\&& \cdot
\Lambda_1\inv[\Omega(\xi_{23},\xi_4,\Lambda_{23}\inv \Lambda_4\inv \Lambda_{34}\Lambda_{12}(a))\inv]
\cdot
\Lambda_1\inv[\Omega(\xi_2,\xi_3,\Lambda_2\inv \Lambda_3\inv \Lambda_4\inv \Lambda_{34}\Lambda_{12}(a))\inv]
\\&&\cdot \Omega(\xi_1,\xi_2,\Lambda_1\inv \Lambda_2\inv \Lambda_3\inv \Lambda_4\inv \Lambda_{34}\inv \Lambda_{12}(a)) \cdot \Lambda_{12}\inv[\Omega(\xi_3,\xi_4, \Lambda_3\inv \Lambda_4\inv \Lambda_{34}\Lambda_{12}(a))]\\
&=&
\Omega(\xi_1,\xi_2,\Lambda_1\inv \Lambda_2\inv \Lambda_{12}(a))\inv
\cdot
\Lambda_1\inv \Big[\Omega(\xi_2,\xi_{34},\Lambda_2\inv \Lambda_{12}(a))
\\& &\cdot
\Omega(\xi_{23},\xi_4,\Lambda_{23}\inv \Lambda_4\inv \Lambda_{34}\Lambda_{12}(a))\inv
\cdot
\Omega(\xi_2,\xi_3,\Lambda_2\inv \Lambda_3\inv \Lambda_4\inv \Lambda_{34} \Lambda_{12}(a))\inv\Big]
\\&&\cdot
\Omega(\xi_1,\xi_2,\Lambda_1\inv \Lambda_2\inv \Lambda_3\inv \Lambda_4\inv \Lambda_{34}\inv \Lambda_{12}(a))
\cdot \Lambda_{12}\inv[\Omega(\xi_3,\xi_4, \Lambda_3\inv \Lambda_4\inv \Lambda_{34} \Lambda_{12}(a))]\\
&\stackrel{\eqref{E 3.3}}{=}&
\Lambda_{12}\inv \Lambda_2\Big[\Omega(\xi_2,\xi_{34},\Lambda_2\inv \Lambda_{12}(a))
\cdot
\Omega(\xi_{23},\xi_4,\Lambda_{23}\inv \Lambda_4\inv \Lambda_{34} \Lambda_{12}(a))\inv
\\&&\cdot
\Omega(\xi_2,\xi_3,\Lambda_2\inv \Lambda_3\inv \Lambda_4\inv \Lambda_{34} \Lambda_{12}(a))\inv\Big]
\cdot \Lambda_{12}\inv[\Omega(\xi_3,\xi_4, \Lambda_3\inv \Lambda_4\inv \Lambda_{34} \Lambda_{12}(a))]\\
&=& \Lambda_{12}\inv \Lambda_2\Big[\Omega(\xi_2,\xi_{34},\Lambda_2\inv \Lambda_{12}(a)) \cdot
\Omega(\xi_{23},\xi_4,\Lambda_{23}\inv \Lambda_4\inv \Lambda_{34} \Lambda_{12}(a))\inv
\\&&\cdot
\Omega(\xi_2,\xi_3,\Lambda_2\inv \Lambda_3\inv \Lambda_4\inv \Lambda_{34} \Lambda_{12}(a))\inv \cdot \Lambda_2\inv[\Omega(\xi_3,\xi_4, \Lambda_3\inv \Lambda_4\inv \Lambda_{34} \Lambda_{12}(a))]\Big]\\
&=&\Lambda_{12}\inv \Lambda_2\Big(\Big[\Xi_3(\xi_2,\xi_3,\xi_4)\big(\Lambda_2\inv \Lambda_{12}(a)\big)\Big]\inv\Big) \\
&=&\Big[\Lambda_{12}\inv \Lambda_2 \Big(\Xi(\xi_2,\xi_3,\xi_4)\big(\Lambda_2\inv \Lambda_{12}(a)\big)\Big)\Big]\inv \\
&=&\Big[\left(\Lambda_{12}\inv(\Lambda_2( \Xi(\xi_2,\xi_3,\xi_4)))\right)(a)\Big]\inv \\
&=&\Big[\Lambda_1\inv( \Xi(\xi_2,\xi_3,\xi_4))(a)\Big]\inv,
\end{eqnarray*}
where for the last equality we have used the fact that the induced action of $\K$ on $Z_\A$ is a morphism from $K^1$ to $\mathrm{Aut}(Z_\A)$. Therefore for every $a\in A^0$,
\begin{eqnarray*}
d\, \Xi(\xi_1,\xi_2,\xi_3,\xi_4)(a)
&=&\Lambda_1\inv (\Xi(\xi_2,\xi_3,\xi_4))(a)\cdot \Theta\\
&=&\Lambda_1\inv (\Xi(\xi_2,\xi_3,\xi_4))(a)\cdot [\Lambda_1\inv (\Xi(\xi_2,\xi_3,\xi_4))(a)]\inv\\
&=&1_a.
\end{eqnarray*}
This shows that $\Xi$ is a 3-cocycle, hence represents a class $[\Xi]$ in $H^3_{\bar \Lambda}(\K,Z_\A)$. This finishes the proof of Theorem \ref{T Apendix-Xi}.

Finally we show that
\begin{theorem}
The cohomology class $[\Xi]$ depends only on $\bar \Lambda$, not on $(\Lambda,\Omega)$.
\end{theorem}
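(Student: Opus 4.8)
The plan is to isolate the two independent sources of non-uniqueness in the datum $(\Lambda,\Omega)$ attached to $\bar\Lambda$: the choice of the smooth lift $\Lambda$, and, for a fixed $\Lambda$, the choice of the family of natural transformations $\Phi_{\xi,\eta}:{\sf id}_\A\Rto\Lambda_{\xi\eta}\inv\Lambda_\eta\Lambda_\xi$ that defines $\Omega$ via \eqref{E def-Omega-2}. I will show separately that altering either choice changes the cochain $\Xi$ only by a coboundary in $C^*_{\bar\Lambda}(\K,Z_\A)$, and then chain the two reductions.

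First, fix $\Lambda$ and let $\Omega,\Omega'$ be two admissible choices. Since $\Omega(\xi,\eta)$ and $\Omega'(\xi,\eta)$ are both natural transformations ${\sf id}_\A\Rto\Lambda_{\xi\eta}\inv\Lambda_\eta\Lambda_\xi$, their vertical difference $c(\xi,\eta):=\Omega'(\xi,\eta)\odot\Omega(\xi,\eta)^{\odot,-1}$ is a natural transformation ${\sf id}_\A\Rto{\sf id}_\A$, hence lies in $Z_\A$ by item (3) of Proposition \ref{P SAut}; this produces $c\in C^2_{\bar\Lambda}(\K,Z_\A)$ with $\Omega'=c\cdot\Omega$. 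Substituting $\Omega'=c\Omega$ into the four factors of \eqref{E def-Xi} and transporting the central values to a common fibre by $\A$-invariance (i.e. repeated use of \eqref{E 3.3}), the central contributions collect into
\[
\Xi'(\xi,\eta,\zeta)=\Xi(\xi,\eta,\zeta)\cdot c(\xi,\eta)\cdot c(\xi\eta,\zeta)\cdot c(\xi,\eta\zeta)\inv\cdot \Lambda_\xi\inv(c(\eta,\zeta)\inv),
\]
and the trailing product is precisely $dc(\xi,\eta,\zeta)\inv$ by the coboundary formula of \S\ref{S appendix-groupoid-cohomology} (recall that $\Lambda_\xi\inv$ acts on $Z_\A$ as $\bar\Lambda(\xi)\inv$). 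Hence $[\Xi']=[\Xi]$ whenever $\Lambda$ is held fixed.

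Second, let $\Lambda,\Lambda'$ be two lifts of $\bar\Lambda$. As in the proof of Theorem \ref{T classification-iso}, there is a smooth family $\rho(\xi):\Lambda_\xi\Rto\Lambda_\xi'$ with $\rho(1_x)=u$. Transporting $\Omega$ along $\rho$ by the equivalence formula of Section \ref{S 5},
\[
\tilde\Omega(\xi,\eta):=\Omega(\xi,\eta)\odot\left[\rho(\xi\eta)^{\circledast,-1}\circledast\rho(\eta)\circledast\rho(\xi)\right],
\]
yields a natural transformation ${\sf id}_\A\Rto(\Lambda'_{\xi\eta})\inv\Lambda'_\eta\Lambda'_\xi$, hence an admissible $\Omega$-datum for $\Lambda'$. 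I claim $(\Lambda,\Omega)$ and $(\Lambda',\tilde\Omega)$ yield the identical cochain $\Xi$: inserting $\tilde\Omega$ into \eqref{E def-Xi} produces, besides the $\Omega$-factors, one $\rho$-correction per slot, and the middle-four interchange between $\odot$ and $\circledast$ (Remark \ref{R 2-cat-of-grpd}) together with $\rho(\xi\eta)^{\circledast,-1}\circledast\rho(\xi\eta)=1_{{\sf id}_\A}$ makes these corrections telescope and cancel. Finally, $\tilde\Omega$ and the given $\Omega'$ are two admissible $\Omega$-data for the single lift $\Lambda'$, so by the first step $[\Xi_{(\Lambda',\tilde\Omega)}]=[\Xi_{(\Lambda',\Omega')}]$. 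Chaining the three identifications gives $[\Xi_{(\Lambda,\Omega)}]=[\Xi_{(\Lambda',\Omega')}]$.

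The main obstacle is the cancellation in the second step: one must check that the four $\rho$-corrections, distributed over the factors of $\Xi$ and conjugated by the differing automorphisms $\Lambda$ versus $\Lambda'$, interchange so as to cancel exactly, rather than merely up to a residual central error. I expect this to be careful but mechanical bookkeeping of horizontal and vertical compositions, entirely parallel to the cyclic-permutation and telescoping computations already carried out for $\Xi_1,\Xi_2,\Xi_3$ in \S\ref{S appendix-Xi}; the centrality of the values of $\Xi$ and repeated application of \eqref{E 3.3} are exactly what legitimise the basepoint transports throughout.
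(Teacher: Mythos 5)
Your proposal is correct and follows essentially the same two-step route as the paper: you first show that, for a fixed lift $\Lambda$, two admissible choices of $\Omega$ differ by a central $2$-cochain $c$ whose coboundary accounts exactly for the change in $\Xi$ (your identity $\Xi'=\Xi\cdot (dc)^{-1}$ matches the paper's case up to the immaterial replacement of $dc$ by its inverse), and you then handle a change of lift by transporting $\Omega$ along $\rho(\xi\eta)^{\circledast,-1}\circledast\rho(\eta)\circledast\rho(\xi)$ and invoking centrality of the values of $\Xi$ to get exact equality of the transported cocycle, precisely as the paper does. The telescoping verification you flag as remaining bookkeeping is left at the same level of detail in the paper's own proof, so nothing is missing relative to the published argument.
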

\begin{proof}
First consider the case that, there is another $\Omega'$ satisfying
$
\Omega'(\xi,\eta,\cdot):{\sf id_A}\Rto \Lambda_{\xi\eta}\inv \Lambda_\eta \Lambda_{\xi}.
$
Then we see that
$
\Omega(\xi,\eta,\cdot)\odot \Omega'(\xi,\eta,\cdot)^{\odot,-1}:\sf id_A\Rto id_A.
$
Hence $\Omega(\xi,\eta,\cdot)\odot \Omega'(\xi,\eta,\cdot)^{\odot,-1}\in Z_\A$ by Proposition \ref{P SAut}. Set
$
\rho(\xi,\eta)(a)
=\Omega(\xi,\eta,a)\cdot \Omega'(\xi,\eta,a)\inv.
$
Then $\rho\in C^2_{\bar \Lambda}(\K,Z_\A)$. Denote by $\Xi'$ the cocycle determined by $(\Lambda,\Omega')$ via \eqref{E def-Xi}. Then we find
$
\Xi=d\rho\cdot \Xi'.
$
So $[\Xi]=[\Xi']$.

We next consider the case that there is another lifting $\Lambda''$ of $\bar \Lambda$ and a corresponding $\Omega''$. Then there are a smooth family of natural transformations
\begin{align}\label{E rho-Appendix}
\rho_\xi:\Lambda_\xi\Rto \Lambda''_\xi.
\end{align}
We define
\[
\Omega'(\xi,\eta,\cdot) :=\Omega(\xi,\eta,\cdot)\odot (\rho_{\xi\eta}^{\circledast,-1}\circledast \rho_\eta\circledast\rho_\xi):{\sf id_A}\Rto (\Lambda_{\xi\eta}'')\inv \Lambda''_\eta \Lambda''_\xi.
\]
Denote by $\Xi$ the cocycle determined by $(\Lambda,\Omega)$, by $\Xi'$ the cocycle determined by $(\Lambda'',\Omega')$, and by $\Xi''$ the cocycle determined by $(\Lambda'',\Omega'')$. Then by the argument for first case we have
$
[\Xi']=[\Xi''].
$
On the other hand, the $\rho$ in \eqref{E rho-Appendix} gives rise to the conjugation  transformation from $(\Lambda,\Omega)$ to $(\Lambda'',\Omega')$. Then since $\Xi$ and $\Xi'$ are both sections of $Z_\A$, we have
$
\Xi'=\Xi.
$
Therefore $[\Xi]=[\Xi'']$.
\end{proof}



%


\begin{thebibliography}{00}

\bibitem{Adem-Leida-Ruan2007}
               A. Adem, J. Leida, Y. Ruan,
               \textit{Orbifolds and stringy topology}.
               Cambridge Tracts in Mathematics {\bf 171}, Cambridge University Press, 2007.

\bibitem{Adem-Milgram2004}
               A. Adem, R. J. Milgram, \textit{Cohomology of finite groups}. Springer-Verlag, Berlin Heidelberg GmbH, 2004.

\bibitem{Bos2013} R. Bos, \textit{Lecture notes on groupoid cohomology}, \href{https://rdbos.files.wordpress.com/2013/06/gpdcoh.pdf} {https://rdbos.files.wordpress.com/2013/06/gpdcoh.pdf}, 2013.


\bibitem{Brown1970} R. Brown, \textit{Fibrations of groupoids},
        J. Algebra, {\bf 15} (1970), 103--132.

\bibitem{Buss-Meyer2016} A. Buss, R. Meyer, \text{Iterated crossed products for groupoid fibrations}, \href{https://arxiv.org/abs/1604.02015}{arXiv:1604.02015}, 2016.

\bibitem{Chen-Du-Ono} B. Chen, C.-Y. Du, K. Ono,
                     \textit{Groupoid structure of moduli space of orbifold stable maps}, 2019, in preparation.

\bibitem{Chen-Du-WangR2019}
               B. Chen, C.-Y. Du, R. Wang,
               \textit{Groupoid of morphisms of groupoids},
                J. Geom. Phys., {\bf 145} (2019), article 103486.

\bibitem{Chen-Du-WangR2017b}
               B. Chen, C.-Y. Du, R. Wang,
               \textit{Orbifold Gromov--Witten theory of weighted blowups},
\href{https://www.math.uci.edu/~ruiw10/pdf/weightedGW.pdf}
{https://www.math.uci.edu/$\sim$ruiw10/pdf/weightedGW.pdf}, preprint 2017.


\bibitem{Chen-Hu2006} B. Chen, S. Hu,
                \textit{A deRham model for Chen--Ruan cohomology ring of Abelian orbifolds}, Math. Ann., {\bf 336} (2006), 51--71.

\bibitem{Chen-Li-Wang2016} B. Chen, A.-M. Li, B.-L. Wang,
                       \textit{Gluing principle for orbifold stratified spaces},
                      in \textit{Geometry and topology of manifolds}, A. Futaki, R. Miyaoka, Z. Tang, W. Zhang Eds.,
                      Springer Proceedings in Mathematics $\&$ Statistics, Vol. {\bf 154} (2016), 15--58. Springer, Japan.

\bibitem{Chen-Ruan2002}
               W. Chen, Y. Ruan,
               \textit{Orbifold Gromov--Witten theory},
               Cont. Math., {\bf 310} (2002), 25--86.


\bibitem{Crainic2003} M. Crainic, \textit{Differentiable and algebroid cohomology, Van Est isomorphisms, and characteristic classes},
    Comment. Math. Helv., {\bf 78} (2003), 681--721.

\bibitem{Crainic-Moerdijk2008} M. Crainic, I. Moerdijk, \textit{Deformations of Lie brackets: cohomological aspects}, J. Eur. Math. Soc., {\bf 10} (2008), 1037--1059.



\bibitem{Du-Chen-Wang2018} C.-Y. Du, B. Chen, R. Wang,
                 \textit{Symplectic neighborhood theorem for symplectic orbifold groupoids (in Chinese)},
                 Acta Math. Sinica (Chin. Ser.), {\bf 61} (2018), 217--231.


\bibitem{Laurent-Gengoux-Stienon-Xu2009}
                C. Laurent-Gengoux, M. Sti\'enon, P. Xu, \textit{Non-abelian differentiable gerbes},
                Adv. Math., {\bf 220} (2009), no. 5, 1357--1427.

\bibitem{Mackenzie1987} K. Mackenzie, \textit{Lie groupoids and Lie algebroids in differential geometry}. Cambridge
University Press, Cambridge, 1987.

\bibitem{Mackenzie2005} K. Mackenzie, \textit{General theory of Lie groupoids and Lie algebroids}. Cambridge University Press, Cambridge, 2005.


\bibitem{Moerdijk2003} I. Moerdijk, \textit{Lie Groupoids, Gerbes, and Non-Abelian Cohomology}, K-Theory, {\bf 28} (2003), 207--258.


\bibitem{Moerdijk-Mrcun2003} I. Moerdijk, J. Mr$\check{\text{c}}$un,
                        \textit{Introduction to foliations and Lie groupoids}.
                        Cambridge University Press, Cambridge, 2003.


\bibitem{Tang-Tseng2014a}
               X. Tang, H.-H. Tseng,
               \textit{Duality theorems of \'etale gerbes on orbifolds}, Adv. Math., {\bf 250} (2014),               496--569.
\bibitem{Tang-Tseng2016}
               X. Tang, H.-H. Tseng,
               \textit{A quantum Leray--Hirsch theorem for banded gerbes},
\href{https://arxiv.org/abs/1602.03564}{arXiv:1602.03564}, (2016).

               
\bibitem{Tu2006}
              J.-L. Tu,
              \textit{Groupoid cohomology and extensions }, Trans. Amer. Math. Soc., {\bf 358} (2006), No. 11, 4721--4747.
\bibitem{Warner1983} F. W. Warner,
         \textit{Foundations of differentiable manifolds and Lie groups}. Vol. {\bf 94} of Graduate Texts in Mathematics, Springer, New York, 1983.

\bibitem{Weibel2002}
             C. A. Weibel, \textit{An introduction to homological algebra}, Cambridge studies in advanced mathematics, Vol. {\bf 38}, Cambridge University Press, Cambridge, 2002.


\end{thebibliography}

\end{document}